\documentclass[a4paper, web]{ieeecolor}
\usepackage{generic}
\usepackage{lipsum}
\usepackage{cite}
\usepackage{amsmath,amssymb,amsfonts}
\usepackage{algorithm,algorithmic}
\usepackage{hyperref}
\hypersetup{hidelinks=true}
\usepackage{textcomp}

\makeatother

\usepackage{graphicx,subfigure}
\usepackage{amsthm}
\usepackage{tabu}
\usepackage{breqn}
\usepackage{verbatim}
\usepackage{enumerate}

\usepackage{esvect}
\usepackage{footnote}
\usepackage{siunitx}
\usepackage{color,xcolor}
\usepackage{url}
\usepackage[makeroom]{cancel}
\usepackage{todonotes}

\newtheorem{theorem}{Theorem}[section]
\newtheorem{corollary}[theorem]{Corollary}

\newtheorem{lemma}[theorem]{Lemma}
\newtheorem{proposition}[theorem]{Proposition}
\newtheorem{remark}{Remark}
\newtheorem*{problem statement}{Problem Statement}

\newtheorem{assumption}{Assumption}

\newcommand{\real}{\mathbb{R}}

\newcommand{\lie}{\mathcal{L}}

\newcommand{\Cc}{{\mathcal{C}}}
\newcommand{\Nc}{{\mathcal{N}}}
\newcommand{\Fc}{{\mathcal{F}}}
\newcommand{\Kc}{{\mathcal{K}}}
\newcommand{\Sc}{\mathcal{S}}

\DeclareMathOperator*{\argmax}{arg\,max}
\DeclareMathOperator*{\argmin}{arg\,min}

\newcommand{\dotprod}[2]{\ensuremath{\left\langle #1, #2 \right\rangle}}

\newcommand{\longthmtitle}[1]{\mbox{}\emph{(#1):}}
\newcommand{\setdef}[2]{\{#1 : #2\}}

\newcommand{\norm}[1]{\left\lVert#1\right\rVert}

\allowdisplaybreaks
\renewcommand{\baselinestretch}{.98}

\begin{document}
\title{Safe Feedback Optimization\\through Control Barrier Functions}
\author{Giannis Delimpaltadakis$^*$ \quad Pol Mestres$^*$ \quad Jorge Cort\'es \quad W.P.M.H. Heemels
\thanks{$^*$Equal contribution.}
\thanks{Giannis Delimpaltadakis is with the Robust and Intelligent Autonomous Systems lab, AI4I institute, Torino, Italy. Maurice Heemels is with the Control Systems Technology (CST) section, Mechanical Engineering, Eindhoven University of Technology. Pol Mestres is with the Department of Mechanical and Civil Engineering, California Institute of Technology, USA. Jorge Cortés is with the Department of Mechanical and Aerospace
Engineering, University of California San Diego, USA. Most of the work was conducted while G. Delimpaltadakis was with CST, Eindhoven University of Technology, and P. Mestres was with the University of California, San Diego, USA. Emails: \texttt{ioannis.delimpaltadakis@ai4i.it, mestres@caltech.edu, cortes@ucsd.edu, m.heemels@tue.nl}. \newline \indent This research is partially funded by the European Research Council (ERC) under the Advanced ERC grant PROACTHIS, no. 101055384 and by AFOSR Award FA9550-23-1-0740.}}
\maketitle

\begin{abstract}
 Feedback optimization refers to a class of methods that steer a control system to a steady state that solves an optimization problem. Despite tremendous progress on the topic, an important problem remains open: \emph{enforcing state constraints at all times}. The difficulty in addressing it lies on mediating between the safety enforcement and the closed-loop  stability, and ensuring the equivalence between closed-loop equilibria and the optimization problem's critical points. In this work, we present a feedback-optimization method that enforces state constraints at all times employing high-order control-barrier functions. We provide several results on the proposed controller dynamics, including well-posedness, safety guarantees, equivalence between equilibria and critical points, and local and global (in certain convex cases) asymptotic stability of optima. Various simulations illustrate our results.
\end{abstract}

\section{Introduction}
Feedback optimization refers to a class of methods that regulate a system to a steady state, that solves an optimization problem; see \cite{hauswirth2021optimization} for a survey. These methods design controllers in the form of gradient flows, and place them in feedback loop with the system, while typically enforcing that the controller dynamics are sufficiently slower than the system dynamics (\emph{timescale separation}; a notable exception is \cite{BIANCHI2025101308}), to guarantee closed-loop stability. Compared to the feedforward approach of solving the optimization problem offline and afterward steering the system to the computed optimum, feedback optimization enjoys superior robustness, as it can handle unknown and time-varying dynamics and objective functions.

Research on feedback optimization has surged~\cite{MC-ED-AB:20,jokic2009constrained,LSPL-JWSP-EM:24}, with applications including power systems~\cite{dall2016optimal,colot2024optimal,brunner2012feedback,KH-JPH-KU:14, chen2020distributed}, traffic control \cite{GB-JC-JIP-EDA:22-tcns}, smart buildings \cite{belgioioso2021sampled} and communication networks \cite{wang2011control}. However, although designs with input constraints have been proposed (e.g., \cite{hauswirth2020timescale,YC-LC-JC-EDA:23-csl,haberle2020non}), a fundamental problem remains unsolved: \emph{enforcing state constraints at all times}. The challenging nature of the problem lies in: a) proposing a safety-enforcement mechanism that retains equivalence between the closed-loop's equilibria and the optimization problem's critical points\footnote{Points satisfying the KKT conditions \cite{NA-AE-MP:20}.}, and b) reconciling closed-loop stability with safety enforcement, which requires controller dynamics to operate at the same timescale as the system, with potentially destabilizing effects. This paper provides a solution addressing both issues.

\subsubsection*{Contributions}
In this work, we propose a feedback optimization method, that enforces both input and state constraints at all times. The proposed controller is a \emph{safe gradient flow} (SGF; see \cite{allibhoy2023control}), and employs \emph{high-order control-barrier functions} (CBFs; see \cite{xiao2021high}), towards enforcing state constraints. The controller dynamics are given by a quadratic program (QP), which can be solved online. We accompany the proposed controller with several theoretical results:
\begin{itemize}
    \item We provide conditions for feasibility of the controller's QP and for the existence and uniqueness of solutions of the closed loop, guaranteeing well-posedness;
    \item We establish conditions under which a subset of the state and input constraint set is forward invariant, thus guaranteeing safety. This subset 
    can be made arbitrarily close to the constraint set by parameter tuning, under compactness assumptions.
    It also contains all feasible points of the feedback-optimization problem (including critical points and optima). Finally, the constraints may be of arbitrary relative degree;
    \item We prove the equivalence between equilibria and critical points under different sufficient conditions. One such condition is that the point lies in the interior of the state constraint set. Further, when global optima lie on the boundary, we propose a regularization-based modification that guarantees existence of a closed-loop equilibrium in the interior, of arbitrarily small suboptimality;
    \item We prove local asymptotic stability of local optima in the interior of the state constraint set. Under additional assumptions, in cases with a unique global optimum, we show global asymptotic stability of the global optimum.
\end{itemize}

\subsubsection*{Related work}
SGFs were originally introduced as autonomous dynamical systems that solve constrained optimization problems \cite{allibhoy2023control}. SGFs have been used in reinforcement learning~\cite{JF-WC-JC-YS:23-csl,PM-AM-JC:25-l4dc} and neural network-based approximations~\cite{DA-YC-AC-JC-EDA:25-tsg} to ensure safety. As controllers interconnected with systems, SGFs have been employed for feedback optimization in \cite{YC-LC-JC-EDA:23-csl}. However, \cite{YC-LC-JC-EDA:23-csl} only guarantees anytime satisfaction of input constraints. Towards enforcing state constraints, our work is the first to combine SGFs with high-order CBFs, which significantly complicates technical analysis. High-order CBFs are key to enforcing state constraints, since these are of high relative degree with respect to the controller dynamics, and thus cannot be handled by standard SGFs or projected gradient flows (PGFs) interconnected to a plant, as these dynamics manipulate only the controller's vector-field. Finally, note that SGFs are continuous approximations of PGFs \cite{delimpaltadakis2023relationship,delimpaltadakis2024continuous}, which have also been used for feedback optimization with input constraints \cite{hauswirth2021optimization}. 

Existing works on feedback optimization either disregard state constraints completely, enforce them only asymptotically, e.g., \cite{GB-JC-JIP-EDA:22-tcns, MC-ED-AB:20, brunner2012feedback, LSPL-JWSP-EM:24, YC-LC-JC-EDA:23-csl}, or consider only \emph{static plants}, e.g., \cite{haberle2020non,colot2024optimal}. The latter two cases can be straightforwardly handled by enforcing corresponding input constraints, as at the steady state, or for static plants, there is a one-to-one correspondence between input and state. However, these techniques typically lead to state violations during transients, as showcased in Section \ref{sec:simulations}. In contrast, our method enforces state constraints at all times, while retaining equivalence between critical points and equilibria, and mediating safety and timescale separation, guaranteeing asymptotic stability. Nonetheless, compared to other works (e.g., \cite{hauswirth2020timescale,GB-MV-JC-EDA:24-tac}), it assumes complete knowledge of the plant dynamics.\footnote{Extending to model-free scenarios is left for future work.}

Related to feedback optimization are the techniques of extremum seeking (ES; e.g. \cite{chen2025continuous,williams2024semiglobal}) and online convex optimization (OCO; e.g. \cite{li2021online,zhou2023safe, karapetyan2023online, nonhoff2022online}). Like the aforementioned feedback-optimization works, ES methods, which address state constraints, either consider static plants or (implicitly) assume that the plant operates at the steady state; see \cite{chen2025continuous, williams2024semiglobal} and references therein. For a comparison between feedback optimization and ES, see \cite{hauswirth2021optimization}. Regarding OCO, the main differences are that OCO generally considers discrete-time plants and focuses on finite-horizon regret guarantees.
Instead, in this work we consider continuous-time plants and study asymptotic convergence to the optimizer.

Finally, a preliminary, conference version of this article has been presented in \cite{delimpaltadakis2025feedback}. The present work extends \cite{delimpaltadakis2025feedback} by: a) proving global asymptotic stability in certain scenarios with unique optimizers and convex objective functions (Theorem~\ref{thm:global_convergence}; \cite{delimpaltadakis2025feedback} provided only local asymptotic stability results), b) providing formal statements and proofs for Proposition~\ref{prop:crcq} and Proposition~\ref{prop:regularization}, which were only alluded to as remarks in \cite{delimpaltadakis2025feedback}, and c) providing a complete technical treatment of all theoretical results (proofs were omitted in~\cite{delimpaltadakis2025feedback}).

\section{Problem statement}
Consider\footnote{For $r\in\mathbb{Z}_{>0}$, we denote $[r]:=\{ 1, 2, \hdots, r \}$. We adopt the following conventions on dimensions: given $g:\real^n\times\real^m\to\real^k$, $(x,u)\mapsto g(x,u)$, continuously differentiable, then $\dfrac{\partial g}{\partial x}(x,u)\in\real^{k\times n}$; given $g:\real^n\to\real$, $x\mapsto g(x)$, continuously differentiable, then $\nabla g(x) = (\frac{\partial g}{\partial x}(x))^\top\in\real^n$. 
Given the optimization problem 
\begin{equation*}
    \min_x \Phi(x) \ \text{s.t.} \ a_i(x)\geq 0, \ b_j(x) = 0, \ i\in[p], \ j\in[q],
\end{equation*}
where $\Phi,a_i,b_j:\real^n\to\real$ are continuously differentiable,
a KKT (or critical) point is a point $\bar{x}\in\real^n$ for which there exist $\bar{\lambda}\in\real^p$, $\bar{\mu}\in\real^q$ satisfying 
\begin{align*}
    &\nabla \Phi (\bar{x}) - \sum_{i=1}^p \bar{\lambda}_i \nabla a_i(\bar{x}) + \sum_{j=1}^q \bar{\mu}_j \nabla b_j(\bar{x}) = 0, \ \bar{\lambda}_i \geq 0, \ i\in[p], \\
    &\bar{\lambda}_i a_i(\bar{x}) = 0, \ a_i(\bar{x}) \geq 0, \ b_j(\bar{x}) = 0, \ i\in[p], \ j\in[q].
\end{align*}
Furthermore, if there exists a neighborhood $\Nc\subseteq\{x:a_i(x)\geq 0, \ b_j(x)=0, \ i\in [p], \ j\in [q]\}$ of $\bar{x}$ such that, for all $x\in\Nc$, $\Phi(\bar{x}) \leq \Phi(x)$, we say that $\bar{x}$ is a local optimum.
If $\Phi(\bar{x}) \leq \Phi(x)$ for all $x\in \{x:a_i(x)\geq 0, \ b_j(x)=0, \ i\in [p], \ j\in [q]\}$, then $\bar{x}$ is a global optimum.

Now, consider the parametric optimization problem
\begin{equation}\label{eq:parametric_nlp}
    \min_x \Phi(x,\theta) \ \text{s.t.} \ a_i(x,\theta)\geq 0, \ i\in[p], \ b_j(x,\theta)=0, \ j\in[q],
\end{equation}
where $\Phi,a_i,b_j:\real^n\times\real^l\to\real$ are continuously differentiable and $\theta\in\real^l$ is a parameter.
Denote $J(x,\theta):=\{i\in[p]: a_i(x,\theta)=0\}$. The following conditions are commonly used in parametric optimization for \emph{sensitivity analysis} \cite{JL:95}, although often more relaxed conditions suffice:
\begin{itemize}
    \item \emph{Mangasarian-Fromovitz Constraint Qualification} (MFCQ): Problem \eqref{eq:parametric_nlp} satisfies MFCQ at $(x_*,\theta_*)$ if there exists $\psi \in\real^n$ such that $\frac{\partial a_i}{\partial x}(x_*,\theta_*) \psi > 0$, for all $i\in J(x_*,\theta_*)$ and $\frac{\partial b_j}{\partial x}(x_*,\theta_*) \psi = 0$ for all $j\in[q]$;
    \item \emph{Constant Rank Constraint Qualification} (CRCQ): Problem \eqref{eq:parametric_nlp} satisfies CRCQ at $(x_*,\theta_*)$ if 
    for any subset $\tilde{J} \subset J(x_*,\theta_*)$, $\{\frac{\partial a_i}{\partial x}(x,\theta)\}_{i\in \tilde{J}} \cup \{ \frac{\partial b_j}{\partial x}(x,\theta) \}_{j=1}^p$ remains of constant rank in a neighborhood of $(x_*,\theta_*)$.
    \item \emph{Linear Independence Constraint Qualification} (LICQ): Problem \eqref{eq:parametric_nlp} satisfies the LICQ at $(x_*,\theta_*)$ if $\{\frac{\partial a_i}{\partial x}(x,\theta)\}_{i\in J(x,\theta)} \cup \{ \frac{\partial b_j}{\partial x}(x,\theta) \}_{j=1}^p$ is linearly independent at $(x_*,\theta_*)$.
\end{itemize}
For the problem~\eqref{eq:parametric_nlp}, if the CRCQ or MFCQ holds at $(x_*,\theta_*)$, the KKT conditions at $\theta = \theta^*$ are necessary for local optimality of $x_*$, for $\theta=\theta_*$.} the control system
\begin{align}\label{eq:system}
    \dot{\xi}(t) = f(\xi(t),\upsilon(t)),
\end{align}
where $f:\real^n\times\real^m\to\real^n$ is a vector field, and $\xi(t)$, $\upsilon(t)$ are the state and control variables at time $t \in \real_{\geq0}$, respectively.

\begin{assumption}\longthmtitle{Globally exponentially stable equilibrium}\label{assum:stability}
    For any constant input signal 
    $\upsilon(t) = u$,
    system \eqref{eq:system} admits a unique globally exponentially stable equilibrium. 
\end{assumption}

Assumption \ref{assum:stability} is common in the feedback-optimization literature~\cite{hauswirth2021optimization}. It implies that, if the controller dynamics is slow enough compared to the plant, then the interconnection is stable. Let  $w:\real^m\to\real^n$ be the map that assigns each input $u\in\real^m$ to the unique equilibrium $w(u)$, i.e.,  $f(w(u),u) = 0$.

In feedback optimization, the goal is to design a controller $\upsilon$
that drives system \eqref{eq:system} to a steady-state state-input pair $(x_*,u_*)$, which solves an optimization problem:
\begin{equation}\label{eq:steady-state_opti_problem}
    (x_*,u_*)\in\left\{\begin{aligned}\argmin_{x,u} \text{ }&\Phi(x)\\
            \mathrm{s.t.} \text{ } & x=w(u), \, h(x)\geq 0, \, b(u)\geq 0,\end{aligned}
            \right.
\end{equation}
where $\Phi:\real^n\to\real$, $h:\real^n\to\real$ and $b:\real^m\to\real$ are sufficiently smooth. The equality constraint $x = w(u)$ guarantees that $(x_*,u_*)$ is an admissible steady state for \eqref{eq:system}. In this work, we also require that the input and state inequality constraints are satisfied at all times, i.e., for all $t \in \real_{\geq0}$:
\begin{equation}\label{eq:transient_constraints}
    \begin{aligned}
        (\xi(t),\upsilon(t)) \in \Sc:=\{(x,u)\in\real^n\times\real^m: \ &h(x)\geq 0, \\ &b(u) \geq 0\}.
    \end{aligned}    
\end{equation}
We aim to design a controller $\upsilon$ that drives~\eqref{eq:system} to a steady-state that is a KKT point of~\eqref{eq:steady-state_opti_problem} while satisfying \eqref{eq:transient_constraints}.
The above problem, \emph{without the transient state constraints $h(\xi(t))\geq 0$}, has been solved by designing a dynamic controller, with dynamics given by, e.g., a PGF \cite{hauswirth2021optimization, hauswirth2020timescale} or an SGF \cite{YC-LC-JC-EDA:23-csl}.

\section{Feedback optimization with state constraints}
In this section we introduce our control design, which combines the safe gradient flow~\cite{allibhoy2023control} and high-order control barrier functions (HOCBFs)~\cite{xiao2021high}. Before presenting it, we introduce necessary  notation related to HOCBFs.

We denote $\lie_f h(x,u) = \frac{\partial h}{\partial x}(x) \cdot f(x,u)$ and, for convenience, $\lie^i_f h(x,u) = \frac{\partial \lie^{i-1}_fh}{\partial x} \cdot f(x,u)$, for $i\geq 2$.
We further define $h_i:\real^n\times\real^m\to\real$ for $i\in\mathbb{Z}_{\geq0}$ as:
    \begin{align}\label{eq:h_i}
        h_0(x,u) &:= h(x), \\
        h_i(x,u) &:= \dfrac{\partial h_{i-1}}{\partial x}(x,u) \cdot f(x,u) + \beta h_{i-1}(x,u), \quad i \geq 1, \notag
    \end{align}
where $\beta>0$ is a design parameter. In what follows, we consider $\beta>0$ fixed. Let the sets $\Sc_i:=\{(x,u)\in\real^n\times\real^m: b(u)\geq 0, h_i(x,u)\geq 0\}$ for $i\in\mathbb{Z}_{\geq 0}$.
Note that $\Sc_0=\Sc$. For systems \eqref{eq:system} with relative degree $r\in\mathbb{N}$ (cf. Assumption~\ref{assum:differentiability, regularity and relative degree} for a precise definition),
the functions $\{ h_i \}_{i=0}^{r-1}$ are independent of $u$, while $h_r$ depends on $u$. In \cite{xiao2021high}, $\upsilon$ is designed such that $h_r(\xi(t),\upsilon(t))\geq 0$ for all $t$, which renders $\bigcap_{i=0}^{r} \Sc_i$ forward invariant, due to the definition of the functions $h_i$.

\begin{remark}\longthmtitle{Effect of $\beta$ on $\Sc_i$}\label{rem:tuning beta}
    As $\beta$ increases, the sets $\{ \Sc_i \}_{i=1}^{r-1}$ approximate $\Sc$ more closely. Informally, when $\Sc$ is compact and $f$ and $h$ are sufficiently smooth, we can take $\beta$ sufficiently large, so that $h_i(x,u)\approx \beta^i h(x,u)$ for $i\in[m]$.
\end{remark}

We also make the following assumptions:
\begin{assumption}\longthmtitle{Relative degree and regularity}\label{assum:differentiability, regularity and relative degree}
    \begin{enumerate}
        \item\label{it:diff-regularity-first} There exists $r\in\mathbb{Z}_{>0}$ such that:
        \begin{itemize}
            \item the functions $f$ and $h$ are $(r+1)$-times and $(r+2)$-times continuously differentiable, respectively;
            \item $\Sc_r \neq \emptyset$; $\frac{\partial h_r(x,u)}{\partial u}(x,u)\neq 0$, for all $(x,u)\in\Sc_r$ such that $h_r(x,u)=0$; and $\frac{\partial h_i(x,u)}{\partial u} = 0$
            for all $(x,u)\in\Sc_r$ and all $i<r$.
        \end{itemize} \label{assum_item:relative degree}
        \item\label{it:diff-regularity-second} The function $b$ is twice continuously differentiable, and $b(u) = 0 \implies \nabla b(u) \neq 0$, for all $(x,u)\in \Sc$.
        \item $\Phi$ and $w$ are twice continuously differentiable.
    \end{enumerate}
\end{assumption}
In what follows, item \ref{assum_item:relative degree} is proven key to design feedback-optimization controllers that enforce state constraints. It implies that the control input appears, after differentiating $h$ along the system dynamics $r$-times. Further, it implies that all functions $\{h_i\}_{i<r}$ depend only on~$x$, and the function $h_r$ is an $r$-order CBF for system \eqref{eq:system}, cf.~\cite{xiao2021high}. 

Towards solving the feedback-optimization problem with state constraints, we propose the controller $\dot{\upsilon}(t) = g_{\epsilon,\alpha,\gamma}(\xi(t),\upsilon(t))$, where
\begin{equation}\label{eq:sgf_QP}
\begin{aligned}
    &g_{\epsilon,\alpha,\gamma}(x,u)=\\
    &\qquad\left\{\begin{aligned}\argmin_{q}\text{ }& \frac{1}{2}\|q +\epsilon\frac{\partial w}{\partial u}(u)^\top \nabla\Phi(x)\|^2 \\
            \mathrm{s.t.:} \text{ } &\nabla^\top b(u)\cdot q +\alpha b(u)\geq 0\\
            &\begin{aligned}\frac{\partial h_r}{\partial x}(x,u)\cdot f(x,u)+ \frac{\partial h_r}{\partial u}(x,u)\cdot q&
            \\+\gamma h_{r}(x,u)\geq 0&\end{aligned}
            \end{aligned}\right.
\end{aligned}
\tag{SGF}
\end{equation}
and $\epsilon>0$, $\alpha>0$ and $\gamma>0$ are parameters to be designed. The closed-loop dynamics become:
\begin{equation}
    \begin{aligned}
        \dot{\xi}(t) &= f(\xi(t),\upsilon(t)), \\
        \dot{\upsilon}(t) &= g_{\epsilon,\alpha,\gamma}(\xi(t),\upsilon(t)).
    \end{aligned}
    \label{eq:closed_loop_sgf}
    \tag{SGF-CL}
\end{equation}
We refer to the controller $g_{\epsilon,\alpha,\gamma}$ as the safe gradient flow (SGF).\footnote{Note that, when evaluated at a steady-state $(w(u),u)$, $g_{\epsilon,\alpha,\gamma}(w(u),u)$ is equivalent to the safe gradient flow vector field, as defined in~\cite{allibhoy2023control}, associated with the steady-state optimization problem $\min_{\bar{u}}\Phi(w(\bar{u}))$ s.t. $b(\bar{u})\geq0, \ h(w(\bar{u}))\geq 0$, which is equivalent to the original problem \eqref{eq:steady-state_opti_problem}.}
Let us provide the intuition behind its design. It is known that the standard gradient-flow controller $\dot{\upsilon} = -\epsilon\frac{\partial w}{\partial u}(\upsilon)^\top \nabla\Phi(\xi)$ drives the system \eqref{eq:system} to the critical points of the feedback-optimization problem \eqref{eq:steady-state_opti_problem}, without the inequality constraints. The controller \eqref{eq:sgf_QP}, inspired by SGFs \cite{allibhoy2023control}, modifies the standard gradient flow in a point-wise minimal manner through a QP, so that the two inequality constraints in \eqref{eq:sgf_QP} are enforced. As will be shown later, these conditions, enforce that $b(\upsilon(t))\geq 0$ and $h(\xi(t))\geq 0$ for all $t\in \mathbb{R}_{\geq0}$, in turn guaranteeing forward invariance of the subset $\bigcap_{i=0}^r \Sc_i$ of $\Sc$, thus enforcing state and input constraints. The QP has a unique solution and can easily be solved online, although the present work does not focus on implementation considerations.

\begin{remark}[Enforcing state constraints]
    Inspired by standard CBF techniques \cite{ames2019control_review}, to enforce state constraints one could place a so-called safety filter in series interconnection with a standard gradient flow. However, this modifies the controller's state directly. This non-trivial modification to the gradient flow complicates analysis on stability of the optima of optimization problem \eqref{eq:steady-state_opti_problem} and on equivalence between the closed-loop's equilibria and the optimization problem's critical points. In contrast, our method, employing high-order CBFs, modifying the controller dynamics (not the state), enables such results.
\end{remark}

In the coming sections, we derive a number of results on the proposed controller~\eqref{eq:sgf_QP}. The proofs are presented in Section \ref{sec:proofs}, unless otherwise stated.

\section{Well-posedness and safety results}\label{sec:safety and well-posedness}
Here, we establish the well-posedness of~\eqref{eq:closed_loop_sgf} and its anytime satisfaction of state and input constraints.

\subsection{Well-posedness}\label{subsec:well-posedness}
To guarantee well-posedness of  \eqref{eq:closed_loop_sgf}, we present results on the feasibility of the QP in \eqref{eq:sgf_QP} and on the existence and uniqueness of solutions of \eqref{eq:closed_loop_sgf}. 

\begin{proposition}[Feasibility]\label{prop:feasibility}
    Let Assumption \ref{assum:differentiability, regularity and relative degree} hold. Assume that, for some compact set $\Cc_r \subseteq \Sc_r$, for any $(x,u) \in \Cc_r$,  there exists $q_{(x,u)}\in\real^m$ such that:
    \begin{itemize}
        \item if $b(u)=0$, then $\nabla^\top b(u) q_{(x,u)} > 0$;
        \item if $h_r(x,u)=0$, then $\frac{\partial h_r}{\partial u}(x,u)q_{(x,u)} + \frac{\partial h_r}{\partial x}(x,u)f(x,u)> 0$.
    \end{itemize}
    Then, there exist $\alpha_f, \gamma_f > 0$ such that, for all $\alpha > \alpha_f$, $\gamma > \gamma_f$, the QP in \eqref{eq:sgf_QP} is feasible for any $(x,u)\in\Cc_r$.
\end{proposition}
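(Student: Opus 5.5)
The plan is a standard compactness argument on the constraint functions. For fixed $(x,u)$, the QP in \eqref{eq:sgf_QP} has two affine constraints in $q$:
\begin{equation*}
c_1(q;x,u,\alpha):=\nabla^\top b(u)q+\alpha b(u), \qquad c_2(q;x,u,\gamma):=\tfrac{\partial h_r}{\partial x}(x,u)f(x,u)+\tfrac{\partial h_r}{\partial u}(x,u)q+\gamma h_r(x,u).
\end{equation*}
On $\Cc_r\subseteq\Sc_r$ we have $b(u)\ge 0$ and $h_r(x,u)\ge 0$. Hence increasing $\alpha$ and $\gamma$ can only enlarge the feasible set. It therefore suffices to find, for each point, a single $q$ that is feasible for some finite $(\alpha,\gamma)$, and then to bound the required $(\alpha,\gamma)$ uniformly over $\Cc_r$.

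\textbf{Step 1 (local strict feasibility).} Fix $(x_0,u_0)\in\Cc_r$ and let $q_0:=q_{(x_0,u_0)}$ be given by the hypothesis. Define
\begin{equation*}
\phi_1(x,u):=\nabla^\top b(u)q_0,\qquad \phi_2(x,u):=\tfrac{\partial h_r}{\partial u}(x,u)q_0+\tfrac{\partial h_r}{\partial x}(x,u)f(x,u).
\end{equation*}
Both are continuous by Assumption~\ref{assum:differentiability, regularity and relative degree}, since $h_r$ is $C^1$ given that $f\in C^{r+1}$ and $h\in C^{r+2}$.

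For $\phi_1$ there are two cases. If $b(u_0)=0$, then $\phi_1(x_0,u_0)>0$, so $\phi_1>0$ on a neighborhood $U_0$. If $b(u_0)>0$, then $b>0$ on a neighborhood, and $\phi_1$ is bounded there.

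We treat $\phi_2$ with $h_r$ in the same way. Shrinking $U_0$ to a compact neighborhood, we obtain the following on $U_0\cap\Cc_r$. Either $\phi_1>0$, or $b\ge\delta_0>0$ and $|\phi_1|\le M_0$. Likewise, either $\phi_2>0$, or $h_r\ge\delta_0$ and $|\phi_2|\le M_0$.

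Now choose $\alpha_0:=M_0/\delta_0$ and $\gamma_0:=M_0/\delta_0$. Then for $\alpha>\alpha_0$ and $\gamma>\gamma_0$, the vector $q_0$ satisfies both constraints at every point of $U_0\cap\Cc_r$. In the first case of each alternative the constraint holds because $\alpha b\ge0$ or $\gamma h_r\ge 0$ is added to a positive quantity. In the second case it holds because $\alpha b\ge \alpha\delta_0>M_0\ge|\phi_1|$, and similarly for $\phi_2$.

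\textbf{Step 2 (compactness).} The open sets $\{U_0\}$ cover $\Cc_r$. By compactness, extract a finite subcover indexed by $k=1,\dots,N$, and set $\alpha_f:=\max_k\alpha_k$ and $\gamma_f:=\max_k\gamma_k$. Every $(x,u)\in\Cc_r$ lies in some $U_k$, so $q_k$ is feasible there for all $\alpha>\alpha_f$ and $\gamma>\gamma_f$. This proves that the QP is feasible on all of $\Cc_r$.

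The main subtlety is the mixed situation: one constraint may be active at the base point while the other is only nearly active nearby. The neighborhood must be chosen so that each constraint independently falls into one of the two cases: strictly positive $\phi_i$ near an active point, or a uniform positive lower bound on $b$ or $h_r$ away from one. Continuity of $b$, $h_r$, $\phi_1$ and $\phi_2$ makes this possible, because the two alternatives can be resolved separately and intersecting two neighborhoods preserves both properties.
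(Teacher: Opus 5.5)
Your proof is correct, but it is organized differently from the paper's.

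\textbf{Your route.} You attach to \emph{every} point of $\Cc_r$ an open neighborhood, a fixed vector $q_0$, and finite thresholds $(\alpha_0,\gamma_0)$ that work uniformly on that neighborhood. Each constraint is handled separately by the dichotomy ``zero at the base point, hence $\phi_i>0$ nearby'' versus ``positive at the base point, hence $b$ (or $h_r$) is bounded below by $\delta_0$ nearby.'' You then take a single finite subcover of $\Cc_r$.

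\textbf{The paper's route.} It covers only the zero sets $S_1=\{b=0\}$, $S_2=\{h_r=0\}$ and $S_3=S_1\cap S_2$, using neighborhoods on which the strict inequalities persist, and extracts finite subcovers. A sequential-compactness (contradiction) argument then gives thresholds $c_1,c_2,c_3>0$: points with $b\in[0,c_1]$, with $h_r\in[0,c_2]$, or with both in $[0,c_3]$ lie in these subcovers. Finally it splits $\Cc_r$ into four regions $Q_1,\dots,Q_4$. On $Q_4$, where both $b$ and $h_r$ are bounded away from zero, it uses $q=0$, and it gives $\alpha_f,\gamma_f$ explicitly as maxima over $\Cc_r$ divided by the thresholds.

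\textbf{Comparison.} Your argument is shorter and needs neither the threshold constants nor the contradiction argument. The mixed situations are resolved automatically by the per-constraint dichotomy at each base point. The paper's version gives more explicit constants and a ``$q=0$ away from the zero sets'' case structure, which mirrors the threshold-based case analyses in the proofs of Proposition~\ref{prop:crcq} and Theorem~\ref{thm:global_convergence}.

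Your argument also yields \emph{strict} feasibility: in both branches of your dichotomy the constraint holds with strict inequality. This is exactly what the paper later invokes to obtain Slater's condition in Proposition~\ref{prop:solutions_sgf} and Lemma~\ref{lem:continuity-alpha-gamma}.

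One minor point to tidy up: the sets in your cover must be open, so ``shrinking $U_0$ to a compact neighborhood'' should read as follows. Take $U_0$ to be an open ball whose closure lies in the region where the bounds hold. Then $\delta_0$ and $M_0$ are obtained on a compact set, while the cover remains open.
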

 
Let us provide the intuition of the assumptions above. Comparing the above inequalities with MFCQ for the set $\Sc_r$, the only difference is the presence of $\frac{\partial h_r}{\partial x}(x,u)f(x,u)$. This term is present here, as the controller accounts for external plant dynamics, to keep trajectories in $\Sc_r$. In standard optimization, this term is absent, as there is no external plant influencing the trajectories of the optimization algorithm (the controller, in our case). Hence, the above inequalities require sufficient regularity of the constraint set $\Sc_r$ (MFCQ), while accounting for plant dynamics (the term $\frac{\partial h_r}{\partial x}(x,u)f(x,u)$). Finally, compactness is required for $\alpha_f$ and $\gamma_f$ to be finite. 

\begin{proposition}[Existence and uniqueness of solutions of \eqref{eq:closed_loop_sgf}]\label{prop:solutions_sgf}
Under the assumptions of, and with the same notation as Proposition~\ref{prop:feasibility}, further assume that CRCQ holds for each $(x,u)\in\Cc_r$ for~\eqref{eq:sgf_QP}. Then, for $\alpha > \alpha_f$, $\gamma > \gamma_f$, $g_{\epsilon,\alpha,\gamma}$ is locally Lipschitz in $\Cc_r$. Thus, \eqref{eq:closed_loop_sgf} has a unique solution for every initial condition in~$\Cc_r$.
\end{proposition}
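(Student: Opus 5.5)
The plan is to view \eqref{eq:sgf_QP} as a parametric quadratic program in the decision variable $q\in\real^m$, with parameter $\theta=(x,u)$, and to invoke a standard sensitivity result for parametric nonlinear programs, e.g.~\cite{JL:95}. That result states the following. If the objective is strongly convex in the decision variable and all problem data are $C^2$ in $(q,\theta)$, and if MFCQ and CRCQ hold at the optimizer, then the solution map $\theta\mapsto q^*(\theta)$ is locally Lipschitz (in fact piecewise $C^1$) near $\theta$.

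\textbf{Step 1: check the hypotheses pointwise on $\Cc_r$.}
\begin{itemize}
\item The cost $\frac12\|q+\epsilon\frac{\partial w}{\partial u}(u)^\top\nabla\Phi(x)\|^2$ is strongly convex in $q$ with Hessian $I$. By Assumption~\ref{assum:differentiability, regularity and relative degree}, its linear term $\frac{\partial w}{\partial u}(u)^\top\nabla\Phi(x)$ is $C^1$.
\item The constraints are affine in $q$. Their coefficients $\nabla b(u)$, $b(u)$, $h_r$, $\frac{\partial h_r}{\partial x}f$ and $\frac{\partial h_r}{\partial u}$ are at least $C^1$ in $(x,u)$. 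This uses $b\in C^2$, $f\in C^{r+1}$ and $h\in C^{r+2}$: then $h_r$ involves at most $r$ derivatives of $f$ and $r$ derivatives of $h$, so $h_r\in C^2$. This is exactly why those differentiability orders were assumed.
\item By Proposition~\ref{prop:feasibility}, for $\alpha>\alpha_f$ and $\gamma>\gamma_f$ the feasible set is nonempty on $\Cc_r$. Together with strong convexity, this gives a unique minimizer $g_{\epsilon,\alpha,\gamma}(x,u)$.
\end{itemize}

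\textbf{Step 2: constraint qualifications at the minimizer.}
\begin{itemize}
\item CRCQ is assumed directly.
\item For MFCQ, I would revisit the proof of Proposition~\ref{prop:feasibility}: the point constructed there satisfies both constraints strictly, since $\alpha,\gamma$ exceed the thresholds. This is a Slater point. For affine-in-$q$ constraints, a Slater point $\bar q$ gives MFCQ at any feasible $q^*$: take the direction $\psi=\bar q-q^*$. Then each active constraint increases along $\psi$ by (its value at $\bar q$) minus $0$, which is positive.
\item If strictness at the thresholds is delicate, I would instead use the open condition $\alpha>\alpha_f$. Increasing $\alpha$ slightly keeps feasibility, which yields strict feasibility for the original $\alpha$.
\end{itemize}

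\textbf{Step 3: local Lipschitz continuity.}
\begin{itemize}
\item Strong convexity gives the strong second-order sufficient condition, and MFCQ gives that the KKT multiplier set is nonempty and compact. CRCQ then yields local Lipschitz continuity of $q^*(\cdot)$ in a neighborhood of each point of $\Cc_r$, via the Liu / Ralph–Dempe-type theorem.
\item As an alternative that avoids citing the theorem, one can use the explicit form. On each active set, $q^*$ is the projection formula $q^*=-v-A_I^\top\lambda_I$ with $\lambda_I$ solving $A_IA_I^\top\lambda_I=\dots$. CRCQ guarantees that a constant-rank pseudoinverse is locally Lipschitz, and there are only finitely many active sets. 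A continuous selection of finitely many Lipschitz pieces is Lipschitz.
\item Compactness of $\Cc_r$ upgrades local to uniform Lipschitz constants on $\Cc_r$, if needed.
\end{itemize}

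\textbf{Step 4: existence and uniqueness of solutions.} The vector field $(f,g_{\epsilon,\alpha,\gamma})$ of \eqref{eq:closed_loop_sgf} is locally Lipschitz on a neighborhood of $\Cc_r$, since $f\in C^1$. Picard–Lindelöf then gives a unique maximal solution from each initial condition in $\Cc_r$.

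\textbf{Main obstacle.} I expect the main difficulty in Steps 2--3: establishing MFCQ uniformly, and ensuring the Lipschitz property holds on a neighborhood rather than only on $\Cc_r$ itself. Points on $\partial\Cc_r$ need nearby $(x,u)$ outside $\Cc_r$ where feasibility was not directly asserted. I would handle this using openness of the strict-feasibility (Slater) condition: strict inequalities at $(x,u)$ persist under small perturbations by continuity of the data, and compactness of $\Cc_r$ makes the required neighborhood uniform.
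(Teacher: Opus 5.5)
Your proposal follows essentially the same route as the paper: the construction in the proof of Proposition~\ref{prop:feasibility} gives strict feasibility (Slater), hence MFCQ at the minimizer of the strongly convex QP; together with the assumed CRCQ, \cite[Theorem 3.6]{JL:95} then gives local Lipschitz continuity of $g_{\epsilon,\alpha,\gamma}$ and hence existence and uniqueness of solutions. One caveat is that your fallback of perturbing $\alpha$ cannot give strict feasibility at points where $b(u)=0$ or $h_r(x,u)=0$, because the terms $\alpha b(u)$ and $\gamma h_r(x,u)$ vanish there; the strictness must come, as in your main argument, from the strict inequalities assumed in Proposition~\ref{prop:feasibility}.
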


The next result provides a condition that guarantees the satisfaction of CRCQ.
\begin{proposition}\longthmtitle{CRCQ satisfaction}\label{prop:crcq}
    Consider a compact subset $\Cc_r\subseteq\Sc_r$. Let Assumption \ref{assum:differentiability, regularity and relative degree} hold. Assume that for all $(x,u)\in\Cc_r$, if $b(u) = 0$ and $h_r(x,u) = 0$, then $\nabla b(u)$ and $\frac{\partial h_r}{\partial u}(x,u)$ are linearly independent. Then, there exist $\alpha_0$, $\beta_0$ such that by taking $\alpha > \alpha_0$, $\gamma > \gamma_0$,~\eqref{eq:sgf_QP} satisfies the CRCQ for all $(x,u)\in\Cc_r$.
\end{proposition}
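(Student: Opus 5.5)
The plan is to view the QP in \eqref{eq:sgf_QP} as a parametric problem in the decision variable $q$ with parameter $\theta=(x,u)$. Its two constraints are
\begin{align*}
a_1(q,\theta)&=\nabla^\top b(u) q+\alpha b(u),\\
a_2(q,\theta)&=\tfrac{\partial h_r}{\partial x}(x,u)f(x,u)+\tfrac{\partial h_r}{\partial u}(x,u)q+\gamma h_r(x,u).
\end{align*}
Their $q$-gradients are $\nabla b(u)^\top$ and $\frac{\partial h_r}{\partial u}(x,u)$, which depend only on $\theta$ and are continuous by Assumption~\ref{assum:differentiability, regularity and relative degree}. CRCQ at $(q_*,\theta_*)$ requires that, for each subset $\tilde J$ of the active set $J(q_*,\theta_*)$, the rank of $\{\partial a_i/\partial q\}_{i\in\tilde J}$ stays constant near $(q_*,\theta_*)$. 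Singletons and pairs are the only cases to check, since there are only two constraints.

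The first step is to show that, for large $\alpha,\gamma$, a constraint can be active only where its gradient is nonzero and, when both are active, only where the gradients are independent.

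\textbf{Constraint 1.} Let $\Cc_b=\{(x,u)\in\Cc_r: \nabla b(u)=0\}$. This set is compact, and it is disjoint from $\{b(u)=0\}$ by Assumption~\ref{assum:differentiability, regularity and relative degree}\ref{it:diff-regularity-second}. Hence $b\ge c_b>0$ on $\Cc_b$. On $\Cc_b$, $a_1=\alpha b(u)\ge \alpha c_b>0$, so constraint 1 is never active there, for any $q$ and any $\alpha>0$.

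\textbf{Constraint 2.} The set $\Cc_h=\{(x,u)\in\Cc_r:\frac{\partial h_r}{\partial u}=0\}$ is compact and disjoint from $\{h_r=0\}$ by item \ref{it:diff-regularity-first}. Hence $h_r\ge c_h>0$ on $\Cc_h$. There $a_2=L_fh_r+\gamma h_r$, and $L_fh_r$ is bounded below by continuity on the compact set. Choosing $\gamma>\gamma_0:=\max(0,-\min_{\Cc_h}L_fh_r)/c_h$ makes $a_2>0$ on $\Cc_h$.

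\textbf{Both constraints.} Let $\Cc_{bh}\subseteq\Cc_r$ be the compact set where $\nabla b(u)$ and $\frac{\partial h_r}{\partial u}$ are linearly dependent. By hypothesis, $b$ and $h_r$ do not vanish simultaneously on $\Cc_{bh}$, so $\max(b,h_r)\ge c>0$ there. This is the delicate part, and I would split $\Cc_{bh}$ into two compact pieces: $\{b\ge c/2\}$ and $\{h_r\ge c/2\}$.
\begin{itemize}
\item On the piece $\{b\ge c/2\}$, an active constraint 1 forces $\nabla b^\top q=-\alpha b\le -\alpha c/2$. The gradients are dependent, so either $\frac{\partial h_r}{\partial u}=0$, which is handled by $\Cc_h$, or $\frac{\partial h_r}{\partial u}=\kappa\nabla b^\top$ for some $\kappa$. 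Only in the second case can activity of both constraints occur.
\item On the piece $\{h_r\ge c/2\}$ the argument is symmetric.
\end{itemize}

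I expect this to be the main obstacle. Bounding $\kappa$ is not uniform, since the gradients can degenerate. The argument also implicitly requires the active $q$ to be the QP solution $g_{\epsilon,\alpha,\gamma}(x,u)$, which is bounded on $\Cc_r$: feasibility from Proposition~\ref{prop:feasibility}, combined with continuity of the QP solution, gives a uniform bound $\|q\|\le M$. I would interpret CRCQ "for all $(x,u)\in\Cc_r$" as holding at the optimizer $q_*=g(x,u)$, which is how it is used in Proposition~\ref{prop:solutions_sgf}.

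With $\|q\|\le M$, each active constraint pins down its residual term. On $\{b\ge c/2\}$, activity of constraint 1 requires $\alpha c/2\le \|\nabla b\|M$, which fails once $\alpha>\alpha_0:=2M\max_{\Cc_r}\|\nabla b\|/c$. Similarly, on $\{h_r\ge c/2\}$, activity of constraint 2 requires $\gamma c/2\le \max|L_fh_r|+M\max\|\partial h_r/\partial u\|$, which fails for large $\gamma$. Hence on $\Cc_{bh}$ the two constraints are never simultaneously active. The same bounded-$q$ trick also gives a cleaner proof of the single-constraint cases.

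\textbf{Conclusion.} It remains to check constancy of rank under perturbation.
\begin{itemize}
\item If only constraint $i$ is active at $(q_*,\theta_*)$, its gradient is nonzero there. By continuity it stays nonzero nearby, so the rank is constantly $1$.
\item If both are active, then $\theta_*\notin\Cc_{bh}$. Linear independence is an open condition, so the rank is constantly $2$ nearby, and each singleton subset also has constant rank $1$.
\end{itemize}
This gives CRCQ. The thresholds are $\alpha_0$ and $\gamma_0$, the latter being the maximum of the two $\gamma$-bounds; the statement's "$\beta_0$" refers to this $\gamma_0$. One caveat: $M$ may depend on $\alpha$ and $\gamma$. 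To avoid this circularity, I would bound $\|q\|$ via the cost, $\|q\|\le 2\epsilon\|\frac{\partial w}{\partial u}^\top\nabla\Phi\|+\|\hat q\|$, using a feasible point $\hat q$ built from Proposition~\ref{prop:feasibility} that remains feasible, and uniformly bounded, as $\alpha$ and $\gamma$ increase.
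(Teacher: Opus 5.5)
Your proof is correct, but it is organized differently from the paper's. The paper works outward from the zero sets. A compactness argument gives $\delta>0$ such that $b\in[0,\delta]$ forces $\nabla b(u)\neq 0$, $h_r\in[0,\delta]$ forces $\frac{\partial h_r}{\partial u}\neq 0$, and both in $[0,\delta]$ force linear independence. It then shows that each constraint is inactive at the optimizer wherever its own value exceeds $\delta$. It breaks the $(\alpha,\gamma)$ circularity with the reduced QPs $g_b$ and $g_h$:
\begin{itemize}
\item For $g_b$, $q=0$ is always feasible, so its bound is independent of $\alpha$, and this bound fixes $\gamma$.
\item For $g_h$, there is an explicit witness near $h_r=0$, so its bound is independent of $\gamma$, and this bound fixes $\alpha$.
\end{itemize}
In the respective regions, $g_{\epsilon,\alpha,\gamma}$ equals $g_b$ or $g_h$.

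You work inward from the degenerate sets instead. Your single-constraint step is simpler than the paper's. On $\Cc_b$ and $\Cc_h$ the constraint functions reduce to $\alpha b(u)$ and $\frac{\partial h_r}{\partial x}f+\gamma h_r$. These are independent of $q$ and strictly positive (the latter for $\gamma>\gamma_0$). So the optimizer is needed only in the dependent-pair case. Your reading of CRCQ at $q_*=g_{\epsilon,\alpha,\gamma}(x,u)$ is also the paper's.

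In the dependent-pair case you bound the full QP solution uniformly through a bounded feasible witness. That is valid, but make two points explicit.
\begin{itemize}
\item The hypotheses of Proposition~\ref{prop:feasibility} are not assumed in Proposition~\ref{prop:crcq}. Check that they follow from Assumption~\ref{assum:differentiability, regularity and relative degree} and the independence hypothesis. Take $q=\nabla b(u)$, a large multiple of $(\frac{\partial h_r}{\partial u})^\top$, or a solution of the full-row-rank $2\times m$ system, according to which constraints vanish.
\item The uniform bound on $\hat q$ comes from the proof of Proposition~\ref{prop:feasibility}, not its statement. Its witnesses lie in the finite family $\{0\}\cup\{q_{(x_i,u_i)}\}$. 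They remain feasible as $\alpha,\gamma$ grow because $b\geq 0$ and $h_r\geq 0$ on $\Cc_r$.
\end{itemize}
Also drop the appeal to continuity of the QP solution. It gives only an $(\alpha,\gamma)$-dependent bound, which is exactly the circularity you flagged.

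In short, the paper's reduced-QP device is self-contained and never needs feasibility of the full QP. Your route isolates where the optimizer actually matters and gives a cleaner single-constraint argument, at the cost of importing the finite-cover construction from Proposition~\ref{prop:feasibility}.
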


The conditions in Proposition~\ref{prop:crcq} are not equivalent to LICQ for~\eqref{eq:sgf_QP}, as the points where the constraints of~\eqref{eq:sgf_QP} are active are not necessarily those where $b(u) = 0$ or $h_r(x,u) = 0$ (see~\cite{PM-AA-JC:24-ejc} for alternative conditions guaranteeing local Lipschitzness of optimization-based controllers).

\subsection{Safety}\label{subsec:safety}
Let us show how the proposed controller enforces state and input constraints at all times. We refer to constraint satisfaction as \emph{safety}, which is common in the CBF literature~\cite{ames2019control_review}.

\begin{proposition}[Safety]\label{prop:safety}
Let Assumption \ref{assum:differentiability, regularity and relative degree} hold.
If the QP in \eqref{eq:sgf_QP} is feasible everywhere in $\bigcap_{i=0}^r\Sc_{i}$, and \eqref{eq:closed_loop_sgf} has a unique solution for any initial condition in $\bigcap_{i=0}^r\Sc_{i}$, then $\bigcap_{i=0}^r\Sc_{i}$ is forward invariant for the closed loop \eqref{eq:closed_loop_sgf}.
\end{proposition}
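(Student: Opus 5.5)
The plan is to take a solution $(\xi(t),\upsilon(t))$ of \eqref{eq:closed_loop_sgf} starting in $\bigcap_{i=0}^r\Sc_i$ and show, constraint by constraint, that $b(\upsilon(t))\geq 0$ and $h_i(\xi(t),\upsilon(t))\geq 0$ for all $i=0,\dots,r$ and all $t$ in the solution's interval of existence. Feasibility of the QP on $\bigcap_{i=0}^r\Sc_i$ makes $\dot\upsilon(t)=g_{\epsilon,\alpha,\gamma}(\xi(t),\upsilon(t))$ a well-defined minimizer. The constraints of \eqref{eq:sgf_QP} then give, along the trajectory,
\begin{align*}
\tfrac{d}{dt} b(\upsilon(t)) &\geq -\alpha\, b(\upsilon(t)), \\
\tfrac{d}{dt} h_r(\xi(t),\upsilon(t)) &\geq -\gamma\, h_r(\xi(t),\upsilon(t)).
\end{align*}
The second inequality uses the chain rule $\tfrac{d}{dt}h_r = \frac{\partial h_r}{\partial x}f + \frac{\partial h_r}{\partial u}\dot\upsilon$, which is valid because $h_r$ is $C^1$ by the differentiability in Assumption~\ref{assum:differentiability, regularity and relative degree}.

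Comparison (Gr\"onwall) arguments then give $b(\upsilon(t))\geq e^{-\alpha t}b(\upsilon(0))\geq 0$ and $h_r(t)\geq e^{-\gamma t}h_r(0)\geq 0$. The input constraint and $\Sc_r$ membership are therefore preserved.

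Next I propagate nonnegativity downward from $h_r$ to $h_0$. For $i<r$, Assumption~\ref{assum:differentiability, regularity and relative degree} gives $\partial h_i/\partial u=0$ on $\Sc_r$, so along the trajectory, which remains in $\Sc_r$ by the first step, $\tfrac{d}{dt}h_{i}(\xi(t)) = \frac{\partial h_{i}}{\partial x}f = h_{i+1}-\beta h_{i}$. Working backward from $i=r-1$, if $h_{i+1}(t)\geq0$ then $\dot h_i\geq -\beta h_i$, so $h_i(t)\geq e^{-\beta t}h_i(0)\geq 0$. Induction down to $i=0$ gives $h(\xi(t))\geq0$, and hence the whole intersection is invariant.

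The main obstacle is the subtlety that $\partial h_i/\partial u=0$ is only assumed on $\Sc_r$, and that the comparison lemma needs a properly justified differential inequality. Since the first step already establishes that the trajectory stays in $\Sc_r$, the derivative identity holds at every time, so this is consistent. Alternatively, I would argue by contradiction on the first exit time $t^*=\sup\{t: (\xi,\upsilon)\in\bigcap_i\Sc_i \text{ on } [0,t]\}$. On $[0,t^*]$ the QP is feasible and the trajectory lies in $\Sc_r$, so the inequalities above hold there. They also extend slightly beyond $t^*$ by continuity of solutions, which gives strict-margin decay estimates that contradict exit. Uniqueness of solutions ensures that the trajectory analyzed is the one generated by the closed loop.
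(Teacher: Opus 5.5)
Your proof is correct and follows the paper's route. The paper also first obtains forward invariance of $\Sc_r$ from the two QP constraints by a standard CBF argument, then applies $h_{i+1}=\frac{\partial h_i}{\partial x}f+\beta h_i$ recursively to push nonnegativity down from $h_r$ to $h_0$; you add detail the paper leaves implicit, namely the explicit comparison estimates and the point that $\partial h_i/\partial u=0$ is only assumed on $\Sc_r$. Your first-exit-time alternative is unnecessary: along any solution the QP constraints hold because $\dot\upsilon=g_{\epsilon,\alpha,\gamma}(\xi,\upsilon)$ must be defined at every point of the solution, not because of continuity.
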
 

\begin{remark}[Compactness]
    Props. \ref{prop:feasibility} and \ref{prop:solutions_sgf} provide sufficient conditions to meet the conditions in Proposition~\ref{prop:safety}. Notice that it suffices that one of the sets $\{ \Sc_i \}_{i=0}^r$ is compact, so that one may choose $\Cc_r:=\bigcap_{i=0}^{r}\Sc_{i} \subseteq \Sc_r$ to be compact. 
\end{remark}

Forward invariance of $\bigcap_{i=0}^{r}\Sc_{i}$ implies that, if the initial condition lies in $\bigcap_{i=0}^{r}\Sc_{i}$, then the proposed controller enforces state and input constraints at all times, i.e., $b(\upsilon(t))\geq0$ and $h(\xi(t))\geq 0$, for all $t\in\mathbb{R}_{\geq0}$.
Indeed, this is because $\bigcap_{i=0}^{r}\Sc_{i} \subset \Sc_0$.
Recall also from Remark~\ref{rem:tuning beta} that, under compactness, sufficiently large $\beta$ yields $\bigcap_{i=0}^{r}\Sc_{i} \approx \Sc$, and hence we can make the difference between the sets $\bigcap_{i=0}^{r}\Sc_{i}$ and $\Sc$ arbitrarily small. Importantly, as the following result shows, the forward-invariant set $\bigcap_{i=0}^{r}\Sc_{i}$ contains all feasible points of the feedback-optimization problem~\eqref{eq:steady-state_opti_problem}, and thus all desired steady states (critical points and optima).

\begin{proposition}\longthmtitle{Feasible points of~\eqref{eq:steady-state_opti_problem} lie in $\bigcap_{i=0}^r\Sc_i$}\label{prop:feasible-points-lie-in-bigcap}
    Let $(x_*,u_*)$ be a feasible point of~\eqref{eq:steady-state_opti_problem}. Then, $(x_*,u_*)\in\bigcap_{i=0}^r\Sc_i$.
\end{proposition}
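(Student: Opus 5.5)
Feasibility of $(x_*,u_*)$ for~\eqref{eq:steady-state_opti_problem} gives three facts: $x_*=w(u_*)$, so $f(x_*,u_*)=0$; $h(x_*)\geq 0$; and $b(u_*)\geq 0$. Since the $b$-part of every $\Sc_i$ is the same condition $b(u)\geq0$, it remains to show $h_i(x_*,u_*)\geq 0$ for all $i\in\{0,\dots,r\}$. The plan is to prove by induction on $i$ the stronger identity $h_i(x_*,u_*)=\beta^i h(x_*)$, which is nonnegative because $\beta>0$ and $h(x_*)\geq 0$.

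The base case is immediate, since $h_0(x_*,u_*)=h(x_*)$ by~\eqref{eq:h_i}. For the inductive step, suppose $h_{i-1}(x_*,u_*)=\beta^{i-1}h(x_*)$. Evaluating the recursion~\eqref{eq:h_i} at $(x_*,u_*)$ gives
\[
h_i(x_*,u_*)=\frac{\partial h_{i-1}}{\partial x}(x_*,u_*)\, f(x_*,u_*)+\beta h_{i-1}(x_*,u_*)=0+\beta\cdot\beta^{i-1}h(x_*).
\]
The first term vanishes because $f(x_*,u_*)=0$. This uses only that the partial derivatives exist, which Assumption~\ref{assum:differentiability, regularity and relative degree} guarantees: $f$ is $(r+1)$-times and $h$ is $(r+2)$-times continuously differentiable, so each $h_{i-1}$ with $i\leq r$ is at least $C^1$ in $x$. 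Note that $\frac{\partial h_{i-1}}{\partial x}$ is the derivative of $h_{i-1}$ as a function on $\real^n\times\real^m$, evaluated at the fixed input $u_*$; no property of $h_{i-1}$ near $(x_*,u_*)$ beyond differentiability is needed.

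Combining the induction with $b(u_*)\geq0$ gives $(x_*,u_*)\in\Sc_i$ for every $i\in\{0,\dots,r\}$, hence $(x_*,u_*)\in\bigcap_{i=0}^r\Sc_i$. There is no real obstacle. The only point needing care is the differentiability bookkeeping, in particular that $h_r$ is well defined. Alternatively, one can sidestep the explicit identity by noting that a steady state is an equilibrium where every Lie-derivative term vanishes, so only the $\beta h_{i-1}$ terms survive.
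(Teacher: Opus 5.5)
Your proof is correct and follows the paper's argument. Feasibility gives $(x_*,u_*)\in\Sc_0$ and $f(x_*,u_*)=0$, so the Lie-derivative term in each $h_i$ vanishes and $h_i(x_*,u_*)=\beta h_{i-1}(x_*,u_*)=\beta^i h(x_*)\geq 0$; your induction and differentiability bookkeeping simply spell out the paper's one-line recursion.
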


\section{Critical points, equilibria and convergence to optima}\label{sec:critical_points_convergence}
Here, we establish the equivalence between critical points of the optimization problem and equilibria of the closed-loop system, and the asymptotic stability of optima.

\subsection{Relationship between critical points and equilibria}\label{subsec:cirical_points_equilibria_regularization}
Let us unveil the relationship between the equilibria of the closed loop  \eqref{eq:closed_loop_sgf} and critical points of \eqref{eq:steady-state_opti_problem}.

\begin{proposition}[Critical points of \eqref{eq:steady-state_opti_problem} - equilibria of \eqref{eq:closed_loop_sgf}]\label{prop:critical_points_equilibria}
    Assume that either MFCQ or CRCQ holds at $(x_*,u_*)\in\Sc$ for optimization problem \eqref{eq:steady-state_opti_problem} and the QP in~\eqref{eq:sgf_QP}\footnote{As the QP \eqref{eq:sgf_QP} has linear constraints, the satisfaction of MFCQ or CRCQ is independent of the value of the decision variable~$q$.}, and that at least one of the following holds:
    \begin{enumerate}
        \item $h(x_*) >0$;
        \item $\nabla h(x_*)$ is a right eigenvector of $\frac{\partial f}{\partial x}(x_*,u_*)$ with eigenvalue $e$ and $e^r<0$;
        \item $(\frac{\partial w}{\partial u}(u_*))^\top$ is a left eigenvector of $\frac{\partial f}{\partial x}(x_*,u_*)$ with eigenvalue $e$ and $e^r<0$.
    \end{enumerate}
    Then $(x_*,u_*)$ is a critical point of \eqref{eq:steady-state_opti_problem} if and only if it is an equilibrium of \eqref{eq:closed_loop_sgf}.
\end{proposition}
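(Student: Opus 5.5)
The plan is to reduce both notions to multiplier conditions and then compare the two sets of multipliers. Throughout, write $A:=\frac{\partial f}{\partial x}(x_*,u_*)$, $B:=\frac{\partial f}{\partial u}(x_*,u_*)$ and $J:=\frac{\partial w}{\partial u}(u_*)$.

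\emph{Equilibria of \eqref{eq:closed_loop_sgf}.} A point $(x_*,u_*)$ is an equilibrium if and only if $f(x_*,u_*)=0$, i.e. $x_*=w(u_*)$ by uniqueness in Assumption~\ref{assum:stability}, and $g_{\epsilon,\alpha,\gamma}(x_*,u_*)=0$. The QP in \eqref{eq:sgf_QP} is strongly convex with linear constraints, and the stated CRCQ/MFCQ guarantees multipliers. Hence $q=0$ is its solution if and only if there exist $\lambda_b,\lambda_h\ge 0$ with
\begin{equation*}
\epsilon J^\top\nabla\Phi(x_*)=\lambda_b\nabla b(u_*)+\lambda_h\tfrac{\partial h_r}{\partial u}(x_*,u_*)^\top ,
\end{equation*}
together with primal feasibility and complementary slackness. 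Since $f(x_*,u_*)=0$, the term $\frac{\partial h_r}{\partial x}f$ vanishes. The constraints at $q=0$ therefore read $\alpha b(u_*)\ge 0$ and $\gamma h_r(x_*,u_*)\ge0$, with $\lambda_b b(u_*)=0$ and $\lambda_h h_r(x_*,u_*)=0$.

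\emph{Critical points of \eqref{eq:steady-state_opti_problem}.} Eliminating $x=w(u)$ as in the footnote to \eqref{eq:sgf_QP}, a critical point satisfies $x_*=w(u_*)$ and
\begin{equation*}
J^\top\nabla\Phi(x_*)=\mu_b\nabla b(u_*)+\mu_h J^\top\nabla h(x_*),
\end{equation*}
with $\mu_b,\mu_h\ge0$, $b(u_*)\ge0$, $h(x_*)\ge0$, and $\mu_b b(u_*)=0$, $\mu_h h(x_*)=0$.

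\emph{Evaluating the HOCBF quantities at a steady state.} By induction on the recursion \eqref{eq:h_i}, and using $f(x_*,u_*)=0$, every Lie-derivative term vanishes, so $h_i(x_*,u_*)=\beta^i h(x_*)$. In particular $h_r$ and $h$ have the same sign and zero set at steady states, which matches the complementary-slackness conditions.

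Next I would compute the gradients at such points. Differentiating $\frac{\partial h_{k-1}}{\partial x}f$ and evaluating at $f=0$, the Hessian term is multiplied by $f$ and drops. This gives $\frac{\partial h_{r-1}}{\partial x}(x_*)=\nabla h(x_*)^\top (A+\beta I)^{r-1}$. Since $h_{r-1}$ does not depend on $u$ (Assumption~\ref{assum:differentiability, regularity and relative degree}),
\begin{equation*}
\tfrac{\partial h_r}{\partial u}(x_*,u_*)=\nabla h(x_*)^\top(A+\beta I)^{r-1}B .
\end{equation*}
Differentiating $f(w(u),u)=0$ gives $AJ=-B$.

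\emph{Case analysis.}
\begin{enumerate}
\item If $h(x_*)>0$, then $h_r>0$, so complementary slackness forces $\lambda_h=\mu_h=0$. The two stationarity conditions coincide with $\lambda_b=\epsilon\mu_b$, and the complementarity conditions agree because $\alpha>0$.
\item If $\nabla h$ is an eigenvector of $A$ with eigenvalue $e$, in the sense $\nabla h^\top A=e\nabla h^\top$, then $\frac{\partial h_r}{\partial u}=(e+\beta)^{r-1}\nabla h^\top B=-(e+\beta)^{r-1}e\,\nabla h^\top J$.
\item If $J^\top$ is a left eigenvector of $A$, then similarly $B=-AJ=-eJ$, and the same kind of scalar multiple appears.
\end{enumerate}
In cases 2 and 3, $\frac{\partial h_r}{\partial u}^\top=c\,J^\top\nabla h$ for an explicit scalar $c$ depending on $e$, $\beta$ and $r$. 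The condition $e^r<0$ should make $c>0$. Then $\lambda_h=\epsilon\mu_h/c$ and $\lambda_b=\epsilon\mu_b$ translate multipliers in both directions, which proves the equivalence.

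\emph{Main obstacle.} The delicate step is the exact gradient computation of $\partial h_r/\partial u$ at the steady state, and verifying that the resulting scalar $c$ is positive precisely under $e^r<0$. I would first recheck the recursion without $\beta$, which gives $c=-e^{r}$ via $\nabla h^\top A^{r-1}B$. I would then check whether the $\beta$-terms in $(A+\beta I)^{r-1}$ alter this sign, possibly needing $e<-\beta$ or a reinterpretation of the condition. Compatibility of the eigenvector convention with the matrix dimensions in case 3 also has to be checked.
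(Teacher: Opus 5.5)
Your skeleton is the paper's: compare the KKT conditions of the QP at $q=0$ with those of \eqref{eq:steady-state_opti_problem} after eliminating the free multiplier $\mu=-\nabla\Phi(x_*)+\lambda_h\nabla h(x_*)$ of $x=w(u)$. It also uses $h_r(x_*,u_*)=\beta^r h(x_*)$, $B=-AJ$, and the translation $\lambda_b=\epsilon\mu_b$, $\lambda_h=\epsilon\mu_h/c$. Item~1 is complete.

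\textbf{The gap.} For items~2--3 you leave open the one step that carries them, namely the sign of $c$. The missing ingredient is the relative-degree part of Assumption~\ref{assum:differentiability, regularity and relative degree}, evaluated at the steady state. In both directions $(x_*,u_*)\in\Sc_r$ (by Proposition~\ref{prop:feasible-points-lie-in-bigcap}, resp.\ feasibility of $q=0$). The same computation you did gives $\frac{\partial h_{k+1}}{\partial u}(x_*,u_*)=\nabla h(x_*)^\top(A+\beta I)^kB$, and this must vanish for $k\le r-2$. Induction on $k$ then gives $\nabla h(x_*)^\top A^kB=0$ for $k\le r-2$. So every $\beta$-cross term in $(A+\beta I)^{r-1}$ drops, and $\frac{\partial h_r}{\partial u}(x_*,u_*)=\nabla h(x_*)^\top A^{r-1}B=-\nabla h(x_*)^\top A^rJ$. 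This is what the paper uses when it writes $\frac{\partial h_r}{\partial u}=\frac{\partial}{\partial u}\lie^r_f h$. Hence $c=-e^r$ exactly, and no condition like $e<-\beta$ is needed.

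Your expression $c=-e(e+\beta)^{r-1}$ does not close the argument by itself. Since $e^r<0$ forces $r$ odd and $e<0$, you get $c\ge 0$. But $c=0$ when $e=-\beta$ and $r\ge 3$, and then dividing by $c$ requires knowing $J^\top\nabla h(x_*)=0$, which again comes from the relative degree. In fact, for $r\ge2$ the case $k=0$ gives $\nabla h^\top B=0$. Under item~2 or~3 this yields $J^\top\nabla h(x_*)=0$ and $\frac{\partial h_r}{\partial u}(x_*,u_*)=0$. With the nondegeneracy in Assumption~\ref{assum:differentiability, regularity and relative degree}, this forces $h(x_*)>0$. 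So items~2--3 only add content when $r=1$, where $c=-e$.

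\textbf{The eigenvector convention.} Keep your reading: $\nabla h(x_*)^\top A=e\,\nabla h(x_*)^\top$, resp.\ $AJ=eJ$. The transpose of $-\nabla h^\top A^rJ$ is $-J^\top(A^\top)^r\nabla h$, whereas \eqref{eq:using_partial_of_w} has $A^r$. Under the literal hypotheses ($A\nabla h=e\nabla h$, resp.\ $J^\top A=eJ^\top$), the claim fails for non-symmetric $A$.

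For example, take $A=\begin{pmatrix}-1&1\\0&-1\end{pmatrix}$ and $B=(-1,1)^\top$, so $w(u)=(0,u)$. Let $h(x)=x_1$ (so $r=1$), $b(u)=1-u^2$ and $\Phi(x)=-x_2$. At the origin $A\nabla h=-\nabla h$, and CRCQ holds for both problems. The QP reduces to $\min_q\frac12(q-\epsilon)^2$ s.t.\ $q\le 0$, so the origin is an equilibrium. Yet $J^\top\nabla\Phi=-1$ while $J^\top\nabla h=0$ and $b(0)>0$, so the origin is not a KKT point.
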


Condition 1 in Proposition~\ref{prop:critical_points_equilibria} establishes that, in the interior of the state constraint set $\{x:\text{ }h(x)\geq0\}$, all equilibria of \eqref{eq:closed_loop_sgf} are critical points of the feedback-optimization problem \eqref{eq:steady-state_opti_problem}, and vice versa. In practice, this is not restrictive, as operating right on the safety boundary $h(x)=0$ is generally undesirable. Nevertheless, equivalence between critical points and equilibria on the boundary is guaranteed if condition 2 or 3 in Proposition~\ref{prop:critical_points_equilibria} holds. If these conditions are not satisfied, however, the closed loop might have spurious equilibria on the boundary, which are not critical points. 

When a global optimum of \eqref{eq:steady-state_opti_problem} satisfies $h(x_*)=0$, the following results show how to modify the controller \eqref{eq:sgf_QP} to obtain a corresponding equilibrium in the interior of the state constraint set that is of arbitrarily small suboptimality.

\begin{proposition}[Regularization of the feedback optimization problem]\label{prop:regularization}
    Let Assumption \ref{assum:differentiability, regularity and relative degree} hold. Denote by $(x_*,u_*)$ a global optimum of \eqref{eq:steady-state_opti_problem} and assume that $h(x_*)=0$ and MFCQ holds for~\eqref{eq:steady-state_opti_problem} at $(x_*,u_*)$. Given $p,\varepsilon >0$, consider: 
    \begin{align}\label{eq:perturbed-optimization-problem}
        \notag
        \min\limits_{x,u}& \ \Phi(x) + p (\varepsilon-h(x))^2 \\
        \mathrm{s.t.}& \ x=w(u), \ b(u) \geq 0, \ h(x) \geq 0.
    \end{align}
    Then, for any $c>0$, there exist $p,\varepsilon>0$ such that the optimizer $(x_*',u_*')$ of~\eqref{eq:perturbed-optimization-problem} satisfies $h(x_*') > 0$ and $|\Phi( x_*')-\Phi(x_*)| < c$.
\end{proposition}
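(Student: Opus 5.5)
We will reduce the problem to the input variable alone: with $\tilde\Phi(u):=\Phi(w(u))$, $\tilde h(u):=h(w(u))$ and the feasible set $F:=\{u: b(u)\geq 0,\ \tilde h(u)\geq 0\}$, problem~\eqref{eq:perturbed-optimization-problem} becomes $\min_{u\in F}\tilde\Phi(u)+p(\varepsilon-\tilde h(u))^2$. The plan is to use MFCQ at $u_*$ to construct nearby feasible points strictly inside the state constraint, and then to choose $p$ large relative to $\varepsilon$ so that the penalty rules out optimizers with $\tilde h=0$.

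\textbf{Step 1: interior points near $u_*$ with small cost.} MFCQ for~\eqref{eq:steady-state_opti_problem} at $(x_*,u_*)$, once the equality constraint $x=w(u)$ is eliminated, gives a direction $\psi$ with $\nabla \tilde h(u_*)^\top\psi>0$, and also $\nabla b(u_*)^\top\psi>0$ if $b(u_*)=0$. Set $u_s:=u_*+s\psi$. For small $s>0$ we have $b(u_s)\geq 0$ and $\tilde h(u_s)\geq \kappa s$ for some $\kappa>0$. By continuity, $\tilde\Phi(u_s)\leq \tilde\Phi(u_*)+Ls$ for a local constant $L$. Fix $s$ small enough that $Ls<c$, and set $\varepsilon:=\tilde h(u_s)>0$. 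The perturbed objective at $u_s$ then equals $\tilde\Phi(u_s)<\tilde\Phi(u_*)+c$.

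\textbf{Step 2: excluding boundary optimizers.} Let $u'$ be an optimizer of~\eqref{eq:perturbed-optimization-problem}; its existence is assumed, or follows from compactness of $F$ as in the statement's use of ``the optimizer''. Because $u_*$ is a global optimum, $\tilde\Phi(u')\geq\tilde\Phi(u_*)$. Suppose $\tilde h(u')=0$. The optimal cost is then at least $\tilde\Phi(u_*)+p\varepsilon^2$. Choosing $p>c/\varepsilon^2$ makes this exceed the value $\tilde\Phi(u_s)<\tilde\Phi(u_*)+c$ obtained at the feasible point $u_s$, which contradicts optimality. Hence $h(x'_*)>0$.

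\textbf{Step 3: suboptimality bound.} By optimality of $u'$ and nonnegativity of the penalty,
$$\tilde\Phi(u_*)\leq\tilde\Phi(u')\leq \tilde\Phi(u')+p(\varepsilon-\tilde h(u'))^2\leq \tilde\Phi(u_s)<\tilde\Phi(u_*)+c.$$
This gives $|\Phi(x_*')-\Phi(x_*)|<c$.

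The main obstacle is Step 1, namely turning MFCQ into feasible interior points with controlled cost when $b(u_*)=0$ is also active. Since $b$ and $\tilde h$ are $C^1$ (by Assumption~\ref{assum:differentiability, regularity and relative degree}), a first-order Taylor argument along $\psi$ should suffice. A secondary point is ensuring that the optimizer exists, which we will take from compactness or assume as in the statement.
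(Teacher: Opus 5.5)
Your proposal is correct and follows essentially the same route as the paper: MFCQ yields a feasible input $u_s$ near $u_*$ with $h(w(u_s))>0$ and cost within $c$ of the optimum. You then set $\varepsilon=h(w(u_s))$ and take $p$ large, so that every feasible point with $h(w(u))=0$ costs at least $\Phi(x_*)+p\varepsilon^2>\Phi(w(u_s))$, and finally sandwich $\Phi(x_*)\le\Phi(x_*')\le\Phi(w(u_s))$. Your use of a single MFCQ direction $\psi$ (obtained by eliminating $x=w(u)$) for both $\tilde h$ and $b$ when $b(u_*)=0$ is tidier than the paper's write-up, which treats the two constraints with separate directions.
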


The next result is a direct byproduct of Props.~\ref{prop:critical_points_equilibria} and~\ref{prop:regularization}.

\begin{corollary}[Regularization of the controller design]\label{cor:regularization}
    Under the assumptions and notation of Proposition~\ref{prop:regularization}, further assume that MFCQ holds for~\eqref{eq:sgf_QP} at $(x_*,u_*)$. For any $c>0$, there exist $p,\varepsilon>0$ such that, if we replace $\Phi(x)$ with $\Phi(x) + p (\varepsilon-h(x))^2$ in \eqref{eq:sgf_QP}, then \eqref{eq:closed_loop_sgf} has an equilibrium $(x',u')$ with $h(x')>0$ and $|\Phi( x')-\Phi(x_*)| < c$.
\end{corollary}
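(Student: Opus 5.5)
The corollary follows by chaining Proposition~\ref{prop:regularization} with Proposition~\ref{prop:critical_points_equilibria}. The latter is applied to the regularized problem~\eqref{eq:perturbed-optimization-problem}, with cost $\tilde\Phi(x):=\Phi(x)+p(\varepsilon-h(x))^2$.

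\textbf{Step 1.} Fix $c>0$. Proposition~\ref{prop:regularization} supplies $p,\varepsilon>0$ and an optimizer $(x',u'):=(x_*',u_*')$ of~\eqref{eq:perturbed-optimization-problem} with $h(x')>0$ and $|\Phi(x')-\Phi(x_*)|<c$. The smoothness requirements of Assumption~\ref{assum:differentiability, regularity and relative degree} still hold for $\tilde\Phi$, because $h$ is at least $C^2$.

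\textbf{Step 2.} We show that $(x',u')$ is a critical point of~\eqref{eq:perturbed-optimization-problem}. It is a global, hence local, optimum. Since $h(x')>0$, the state constraint is inactive there. Eliminating $x=w(u)$, the problem reduces to minimizing $\tilde\Phi(w(u))$ subject to $b(u)\ge 0$ near $u'$. If $b(u')>0$, the gradient vanishes. If $b(u')=0$, Assumption~\ref{assum:differentiability, regularity and relative degree}\ref{it:diff-regularity-second} gives $\nabla b(u')\neq 0$, so MFCQ/LICQ holds for the single active constraint. In either case KKT multipliers exist. Lifting back to the $(x,u)$ formulation gives KKT for~\eqref{eq:perturbed-optimization-problem}, with the equality-constraint multiplier defined through $x=w(u)$.

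\textbf{Step 3.} We invoke Proposition~\ref{prop:critical_points_equilibria} at $(x',u')$ for the controller~\eqref{eq:sgf_QP} with $\Phi$ replaced by $\tilde\Phi$. Condition~1 holds because $h(x')>0$. The proposition also requires MFCQ or CRCQ at $(x',u')$, both for~\eqref{eq:perturbed-optimization-problem} and for the QP.
\begin{itemize}
\item For~\eqref{eq:perturbed-optimization-problem}, Step~2 already provides the needed qualification.
\item For the QP, the constraints do not involve $\Phi$, so regularization does not change them. The assumed MFCQ for~\eqref{eq:sgf_QP} at $(x_*,u_*)$ must still be transferred to $(x',u')$.
\end{itemize}

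\textbf{Main obstacle.} The obstacle is this transfer, because $(x',u')$ need not coincide with $(x_*,u_*)$. I would handle it in one of two ways.
\begin{itemize}
\item \emph{Openness argument.} MFCQ for linear constraints in $q$ with continuous data is an open condition in $(x,u)$. If Proposition~\ref{prop:regularization} yields $(x',u')$ arbitrarily close to $(x_*,u_*)$ as $p,\varepsilon$ are tuned, with $\varepsilon\to0$ and the penalty shrinking, then choosing $c$ small enough gives MFCQ at $(x',u')$. Shrinking $c$ is harmless, since the statement's bound only gets stronger.
\item \emph{Direct check.} The first QP constraint is strictly feasible for large $q$ in the direction $\nabla b(u')$ whenever $b(u')=0$. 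The $h_r$ constraint requires $\partial h_r/\partial u\neq 0$ when it is active, and near the boundary Assumption~\ref{assum:differentiability, regularity and relative degree}\ref{it:diff-regularity-first} gives this.
\end{itemize}
I would first try the openness argument, reading it as the intended meaning of "MFCQ at $(x_*,u_*)$". If the continuity needed for it cannot be extracted from the proof of Proposition~\ref{prop:regularization}, I would fall back to the direct check.

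With the qualification in place, Proposition~\ref{prop:critical_points_equilibria} shows that $(x',u')$ is an equilibrium of~\eqref{eq:closed_loop_sgf}. Together with Step~1, this proves the claim.
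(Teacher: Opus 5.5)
Your route is the paper's: the corollary is presented there as a direct byproduct of Proposition~\ref{prop:regularization} and condition~1 of Proposition~\ref{prop:critical_points_equilibria}. Your Steps~1--2 are fine: smoothness of $\tilde\Phi$, and KKT at $(x',u')$ via $\nabla b(u')\neq 0$ and LICQ of the lifted constraints. You are also right that the extra hypothesis is placed at $(x_*,u_*)$, while Proposition~\ref{prop:critical_points_equilibria} must be invoked at $(x',u')$.

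Neither of your two ways of bridging this works as written. The openness argument needs $(x',u')$ close to $(x_*,u_*)$, and Proposition~\ref{prop:regularization} gives no such control. Its proof only yields $\Phi(x_*)\le\Phi(x')\le\Phi(w(u_c))$. Moreover, $(x_*,u_*)$ is merely \emph{a} global optimum, and no assumption (uniqueness, compact level sets) turns small suboptimality $c$ into proximity. The direct check has two problems. First, it treats the two QP constraints separately, whereas MFCQ needs one direction that works for all active constraints simultaneously. Second, it appeals to item~\ref{it:diff-regularity-first} of Assumption~\ref{assum:differentiability, regularity and relative degree}, which only concerns points with $h_r=0$. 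At $(x',u')$ you have $h_r(x',u')>0$, so that item says nothing there.

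The missing observation is that $x'=w(u')$ forces $f(x',u')=0$. Hence $\frac{\partial h_r}{\partial x}(x',u')f(x',u')=0$ and $h_r(x',u')=\beta^r h(x')>0$. If $b(u')>0$, take $q=0$. If $b(u')=0$, take $q=s\nabla b(u')$ with $s>0$ small, using $\nabla b(u')\neq 0$ from item~\ref{it:diff-regularity-second}. In either case $q$ satisfies both constraints of \eqref{eq:sgf_QP} strictly. Slater's condition for a QP with affine constraints gives MFCQ at every feasible point, in particular at the optimizer. So the QP qualification at $(x',u')$ holds automatically, and the assumption at $(x_*,u_*)$ is not needed.

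Alternatively, the ``$\Rightarrow$'' direction in the proof of Proposition~\ref{prop:critical_points_equilibria} uses no constraint qualification for the QP at all. With $\lambda_h=0$, the multipliers $\lambda'_h=0$ and $\lambda'_b=\epsilon\lambda_b$ certify $q=0$ through the KKT conditions, which are sufficient for the convex QP.
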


\subsection{Asymptotic stability of optima in the interior of safe set}
Here, we provide conditions under which the optima of the 
problem \eqref{eq:steady-state_opti_problem} that lie in the interior of the state-constraint set are asymptotically stable. First, we recall the following result, which follows from the open-loop stability of system \eqref{eq:system}.
\begin{lemma}[\hspace{-.1mm}{\cite[Lemma 2.1]{YC-LC-JC-EDA:23-csl}}]\label{lem:W}
Let Assumption \ref{assum:stability} hold. Then, there exist a function $W:\real^n\times\real^m\to\real$ and positive constants $d_1, d_2, d_3, d_4$, such that, for any $(x,u)\in\real^{n+m}$:
    \begin{align*}
        &d_1 \norm{x-w(u)}^2 \leq W(x,u) \leq d_2 \norm{x-w(u)}^2, \\
        &\frac{\partial W(x,u)}{\partial x}f(x,u) \leq -d_3 \norm{x-w(u)}^2, \\
        &\norm{\frac{\partial W(x,u)}{\partial u}} \leq d_4 \norm{x-w(u)}.
    \end{align*}
\end{lemma}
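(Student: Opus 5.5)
This is a converse Lyapunov construction for the parametrized family $\dot{x}=f(x,u)$, with $u$ frozen as a parameter. The natural candidate is
\[
W(x,u)=\int_0^{T}\norm{\phi(s;x,u)-w(u)}^2\,ds,
\]
where $\phi(s;x,u)$ is the solution of $\dot{\xi}=f(\xi,u)$ from $\xi(0)=x$ with constant input $u$. The horizon $T$ is either a fixed finite number or $T=\infty$. To make the constants uniform in $u$, I will read Assumption~\ref{assum:stability} as exponential stability with rate constants that do not depend on $u$:
\[
\norm{\phi(s;x,u)-w(u)}\le k e^{-\lambda s}\norm{x-w(u)}.
\]
I also need $\frac{\partial f}{\partial x}$ and $\frac{\partial f}{\partial u}$ bounded, so that the sensitivities are controlled. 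Since this lemma is quoted from \cite[Lemma 2.1]{YC-LC-JC-EDA:23-csl}, those standing hypotheses are inherited from that reference.

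\emph{Quadratic bounds.} The upper bound comes from integrating the exponential estimate, giving $d_2=k^2/(2\lambda)$. For the lower bound I use that $\norm{f}$ is Lipschitz with constant $L$ in $x$, uniformly in $u$, and that $f(w(u),u)=0$. Together these give $\norm{\phi(s)-w(u)}\ge e^{-Ls}\norm{x-w(u)}$, so $d_1=1/(2L)$.

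\emph{Decrease along the flow.} Along trajectories with $u$ frozen,
\[
\frac{\partial W}{\partial x}f(x,u)=\frac{d}{dt}W(\phi(t;x,u),u)\Big|_{t=0}=-\norm{x-w(u)}^2+\norm{\phi(T)-w(u)}^2 ,
\]
where the last term vanishes if $T=\infty$. If $T$ is finite, I choose it so that $k^2e^{-2\lambda T}\le 1/2$, which yields $d_3=1/2$ in either case.

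\emph{Bound on $\partial W/\partial u$.} I differentiate under the integral:
\[
\frac{\partial W}{\partial u}=2\int_0^T(\phi-w(u))^\top\Big(\frac{\partial\phi}{\partial u}-\frac{\partial w}{\partial u}\Big)ds .
\]
The first factor is bounded by $ke^{-\lambda s}\norm{x-w(u)}$, so it suffices to bound $\norm{\frac{\partial\phi}{\partial u}-\frac{\partial w}{\partial u}}$ uniformly in $s$. Differentiating $f(w(u),u)=0$ gives $\frac{\partial w}{\partial u}=-\big(\frac{\partial f}{\partial x}\big)^{-1}\frac{\partial f}{\partial u}$, which is bounded given invertibility of $\frac{\partial f}{\partial x}$ at equilibria. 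The sensitivity $\frac{\partial\phi}{\partial u}$ satisfies a linear variational equation driven by $\frac{\partial f}{\partial u}$. Exponential stability of the linearization along trajectories then bounds it uniformly. An alternative is to take $T$ finite, in which case Grönwall gives a bound of size $e^{LT}$. The result is $d_4=2k\,C/\lambda$ (with $\lambda$ replaced by $1/T$ in the finite-horizon case), where $C$ is the sensitivity bound.

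\emph{Main obstacle.} The hard step is the uniform bound on $\partial W/\partial u$, and I expect it to be the only delicate point. Smoothness of $W$ needs $f\in C^1$, which Assumption~\ref{assum:differentiability, regularity and relative degree} provides. The real issue is uniformity of the constants in $u$ over all of $\real^m$. I would handle it with the finite horizon $T$ fixed as in the decrease step, plus global Lipschitz bounds on $f$. If these are not available globally, the lemma should be restricted to a compact set of $u$, which is all the later local stability results require.
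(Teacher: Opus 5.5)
The paper does not prove this lemma; it imports it from \cite{YC-LC-JC-EDA:23-csl}. Your finite-horizon converse-Lyapunov construction is the standard argument behind such statements, so the overall route is sound. Your strengthened reading of Assumption~\ref{assum:stability} is in fact necessary. For the scalar plant $\dot x=-(x-u)/(1+u^2)$, every frozen $u$ gives a globally exponentially stable equilibrium. Yet the first two lines of the lemma would force $\norm{\phi(t;x,u)-w(u)}\le\sqrt{d_2/d_1}\,e^{-d_3t/(2d_2)}\norm{x-w(u)}$, which fails once $1/(1+u^2)<d_3/(2d_2)$. So $u$-independent $k,\lambda$ and global bounds $L,L_u$ on $\frac{\partial f}{\partial x},\frac{\partial f}{\partial u}$ must be assumed, as you say.

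One step does not go through as written: the uniform bound on $\frac{\partial w}{\partial u}$. Invertibility of $\frac{\partial f}{\partial x}(w(u),u)$ gives finiteness at each $u$, not a bound over all of $\real^m$. Your Gr\"onwall estimate controls $\frac{\partial \phi}{\partial u}$, not $\frac{\partial w}{\partial u}$. You can repair this with your own ingredients.
\begin{itemize}
\item Let $\psi(s)=\phi(s;w(u),u')$. Since $f(w(u),u)=0$, Gr\"onwall gives $\norm{\psi(s)-w(u)}\le\frac{L_u}{L}(e^{Ls}-1)\norm{u'-u}$.
\item Uniform GES gives $\norm{\psi(s)-w(u')}\le ke^{-\lambda s}\norm{w(u)-w(u')}$.
\item Take $s=T$ with $ke^{-\lambda T}\le 1/2$, which is compatible with your decrease step. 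The triangle inequality then yields $\norm{w(u')-w(u)}\le\frac{2L_u}{L}(e^{LT}-1)\norm{u'-u}$.
\end{itemize}
So $w$ is globally Lipschitz, and $d_4=6kL_u(e^{LT}-1)/(L\lambda)$ works, with no need to replace $\lambda$ by $1/T$.

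Also drop the $T=\infty$ option rather than treating it as interchangeable. Global exponential stability does not make the variational equation exponentially stable along all trajectories; that is a contraction-type property. Over an infinite horizon, sensitivities can grow with $\norm{x-w(u)}$, and the integral then need not give a bound linear in $\norm{x-w(u)}$. The finite horizon is exactly what lets Gr\"onwall suffice. Finally, for finite $T$ the lower constant is $d_1=(1-e^{-2LT})/(2L)$, not $1/(2L)$.
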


Now, we first show that the equilibria of the closed loop in the interior of the state constraint set, which correspond to optima of \eqref{eq:steady-state_opti_problem}, can be rendered locally asymptotically stable.

\begin{theorem}[Local asymptotic stability of local optima]\label{thm:local_convergence}
    Let Assumptions \ref{assum:stability} and \ref{assum:differentiability, regularity and relative degree} hold. Let $(x_*,u_*)$ be a local optimum of \eqref{eq:steady-state_opti_problem} with $h(x_*)>0$ and $\Nc$ a neighborhood where $(x_*,u_*)$ is the only critical point. Assume that $u\mapsto \Phi(w(u))$ has compact level sets. Assume the QP in~\eqref{eq:sgf_QP} is feasible everywhere in $\Nc$, that MFCQ or CRCQ holds in $\Nc$ for \eqref{eq:steady-state_opti_problem} and the QP in~\eqref{eq:sgf_QP}, and that unique solutions of~\eqref{eq:closed_loop_sgf} exist for any initial condition in $\Nc$. Then, there exist $\epsilon_*,\gamma_*>0$, such that, for any $\epsilon \in (0,\epsilon_*)$ and $\gamma \geq \gamma_*$, the point $(x_*,u_*)$ is locally asymptotically stable relative\footnote{This means that there exist neighborhoods $\Nc_1, \Nc_2$ of $(x_*,u_*)$ such that the trajectories with initial condition in $\Nc_1\cap\Sc_r$ stay in $\Nc_2\cap\Sc_r$ for all $t\in\real_{\geq0}$ and converge to $(x_*,u_*)$ as $t\to\infty$.} to~$\Sc_r$ for~\eqref{eq:closed_loop_sgf}.
\end{theorem}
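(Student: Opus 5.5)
The plan is a composite Lyapunov argument in the spirit of singular perturbation, combining the plant function $W$ of Lemma~\ref{lem:W} with the cost $V(u):=\Phi(w(u))-\Phi(w(u_*))$, following \cite{YC-LC-JC-EDA:23-csl}. The candidate is
\[
U(x,u)=\theta V(u)+(1-\theta)\tfrac{1}{\epsilon}W(x,u),\qquad \theta\in(0,1).
\]
Since $(x_*,u_*)$ is a strict local optimum (it is the only critical point in $\Nc$) and the level sets of $V$ are compact, I will restrict to a small sublevel set of $U$ around $(x_*,u_*)$ that lies in $\Nc$. On that set $U$ is positive definite, and the aim is to show $\dot U\le 0$ with equality only at $(x_*,u_*)$.

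\textbf{Step 1 (localize away from the state constraint).} Because $h(x_*)>0$ and $x_*=w(u_*)$, we have $\lie_f h(x_*,u_*)=0$, and hence $h_i(x_*,u_*)=\beta^i h(x_*)>0$ for all $i\le r$. By continuity there is a neighborhood on which $h_r\ge\delta>0$. There the CBF constraint of \eqref{eq:sgf_QP} reads
\[
\tfrac{\partial h_r}{\partial x}f+\tfrac{\partial h_r}{\partial u}q+\gamma h_r\ge 0 .
\]
On a compact neighborhood, $q$ ranges over a bounded set and $f$, together with the derivatives of $h_r$, is bounded. So for $\gamma\ge\gamma_*$ large, this constraint is inactive at the minimizer. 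This is where $\gamma_*$ comes from. Near $(x_*,u_*)$, therefore, $g_{\epsilon,\alpha,\gamma}$ coincides with the input-constrained SGF of \cite{YC-LC-JC-EDA:23-csl}, which contains only the $b$-constraint.

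\textbf{Step 2 (decrease estimates).} Write $q^*=g_{\epsilon,\alpha,\gamma}(x,u)$. Let $\hat q$ be the same QP evaluated at the steady state $(w(u),u)$, scaled by $1/\epsilon$; this is the SGF of \cite{allibhoy2023control} for $\min_u \Phi(w(u))$ subject to $b(u)\ge0$. The KKT conditions give $\nabla V^\top \hat q\le -\|\hat q\|^2$ plus a complementarity term, and this is negative away from critical points. Local Lipschitzness of the QP solution under MFCQ/CRCQ (Proposition~\ref{prop:solutions_sgf}-type sensitivity) yields
\[
\|q^*-\epsilon\hat q\|\le \epsilon L\|x-w(u)\|.
\]
Then:
\begin{itemize}
\item $\dot V\le -\epsilon\|\hat q\|^2+\epsilon c_1\|\hat q\|\,\|x-w(u)\|$;
\item by Lemma~\ref{lem:W}, $\dot W\le -d_3\|x-w(u)\|^2+d_4\|x-w(u)\|\,\|q^*\|$, where $\|q^*\|\le \epsilon(\|\hat q\|+L\|x-w(u)\|)$.
\end{itemize}
Combining, $\dot U\le -\epsilon\, z^\top M z$ with $z=(\|\hat q\|,\|x-w(u)\|)$. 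The matrix $M$ is positive definite for $\epsilon<\epsilon_*$ and a suitable $\theta$ (timescale separation).

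\textbf{Step 3 (conclude).} $\dot U=0$ forces $x=w(u)$ and $\hat q=0$, which means $u$ is a critical point. By Proposition~\ref{prop:critical_points_equilibria} (condition 1) and uniqueness in $\Nc$, this point is $(x_*,u_*)$. LaSalle's invariance principle on the compact, forward invariant sublevel set, with solutions well posed by assumption, then gives local asymptotic stability relative to $\Sc_r$. Forward invariance of $\Sc_r$ itself follows from Proposition~\ref{prop:safety}.

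\textbf{Main obstacle.} The hardest part is Step 2, specifically the bound $\|q^*-\epsilon\hat q\|\lesssim\|x-w(u)\|$ and the inequality $\nabla V^\top\hat q\le-\|\hat q\|^2$ in the presence of the active $b$-constraint. I would first try to derive both from the KKT multipliers and Lipschitz continuity of the QP solution map in its linear data, which is where the CRCQ/MFCQ hypothesis enters. A secondary delicate point is ensuring that the positive-definiteness region of $V$ and the region where the CBF constraint is inactive are compatible with a single sublevel set of $U$.
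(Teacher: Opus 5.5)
Your architecture is sound: Lemma~\ref{lem:W} plus the steady-state cost, a large $\gamma$ near $(x_*,u_*)$, a $2\times2$ quadratic form made definite by small $\epsilon$, and Proposition~\ref{prop:critical_points_equilibria} under condition~1 to identify the zero set. The gap is the first bullet of Step~2, which does not follow from the ingredients you list. From $\|q^*-\epsilon\hat q\|\le\epsilon L\|x-w(u)\|$ and the KKT conditions for $\hat q$ you only get
\[
\dot V=\epsilon\nabla V(u)^\top\hat q+\nabla V(u)^\top(q^*-\epsilon\hat q)\le-\epsilon\|\hat q\|^2+\epsilon L\|\nabla V(u)\|\,\|x-w(u)\|.
\]
Here $\|\nabla V(u)\|$ is not controlled by $\|\hat q\|$. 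If the input constraint is active at $u_*$ with multiplier $\lambda_b>0$, then $\nabla V(u_*)=\lambda_b\nabla b(u_*)\neq0$, while $\hat q=0$ there. The cross term is therefore linear in $\|x-w(u)\|$ with a coefficient that does not vanish at the equilibrium. No choice of $\theta,\epsilon$ lets the $-\|x-w(u)\|^2$ term from $W$ dominate it, so $\dot U\le-\epsilon z^\top Mz$ is not established.

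The step can be repaired. Write $\nabla V(u)=\mu\nabla b(u)-\hat q$ and split on whether the $b$-constraint is active at $\epsilon\hat q$ and at $q^*$. If it is active at both, $\mu\nabla b(u)^\top q^*=-\mu\alpha b(u)\le0$. If it is active only at $\epsilon\hat q$, then $\mu\nabla b(u)^\top q^*\le\frac{\epsilon L^2}{4}\|x-w(u)\|^2$, which your $\frac1\epsilon W$ weighting absorbs. This case analysis is the real crux, and it is missing. Two side remarks. The factor $\epsilon$ in your Lipschitz bound comes from nonexpansiveness of the projection onto the $x$-independent half-space $\{q:\nabla b(u)^\top q+\alpha b(u)\ge0\}$; the sensitivity results behind Proposition~\ref{prop:solutions_sgf} do not by themselves deliver it. The inequality $\nabla V^\top\hat q\le-\|\hat q\|^2$ you flagged is immediate from complementarity and $b(u)\ge0$.

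The paper avoids the steady-state flow $\hat q$ altogether. It splits $\frac{\partial w}{\partial u}(u)^\top\nabla\Phi(w(u))$ at the \emph{current} state, as $\frac{\partial w}{\partial u}(u)^\top\nabla\Phi(x)$ plus a remainder of norm at most $L\|x-w(u)\|$. It then applies the KKT conditions of \eqref{eq:sgf_QP} at $(x,u)$ itself, which give $\nabla\Phi(x)^\top\frac{\partial w}{\partial u}(u)q^*=-\frac1\epsilon\big(\|q^*\|^2+\lambda_b'\alpha b(u)+\lambda_h'(\gamma h_r+\frac{\partial h_r}{\partial x}f)\big)\le-\frac1\epsilon\|q^*\|^2$. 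Here $\gamma_*$ is needed only to make $\gamma h_r+\frac{\partial h_r}{\partial x}f\ge0$ on a sublevel set where $h_r>0$, not to deactivate the CBF constraint. With $V=\Phi(w(u))-\Phi(w(u_*))+W$ and $\zeta=(\|q^*\|,\|x-w(u)\|)$, this yields $\dot V\le\zeta^\top P_\epsilon\zeta$, negative definite for $\epsilon<4d_3/(L+d_4)^2$. That route needs no Lipschitz sensitivity of the QP and no weight $\theta$, and its Lyapunov function and sublevel set are independent of $\epsilon$ and $\gamma$. Your route, once repaired, reaches the same conclusion, but with more case analysis and an $\epsilon$-dependent estimate of the region of attraction.
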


Beyond local asymptotic stability, when \eqref{eq:steady-state_opti_problem} admits a unique optimum, we obtain global asymptotic stability, under additional conditions. Let us first introduce a technical result, which is used in the sequel.

\begin{lemma}\longthmtitle{Conditions for $\sigma$-strong MFCQ}\label{lem:conditions-sigma-strong-mfcq}
   Let the assumptions of Proposition~\ref{prop:feasibility} hold.
   Then, for any compact subset $\Cc_r\subseteq \Sc_r$, there exists $\sigma \in(0,1)$ such that 
   for any $(x,u)\in\Cc_r$, there exists $q_{(x,u)}\in\real^m$ such that 
   \begin{align}\label{eq:sigma-strong-mfcq}
       &\bullet \text{ if $b(u) = 0$, then $\nabla b(u)^\top q_{(x,u)} \geq \sigma \|q_{(x,u)}\|\|\nabla b(u)\|$;} \notag \\
       &\bullet \text{ if $h_r(x,u)=0$, then} \notag\\
       &\frac{\partial h_r}{\partial u}(x,u)q_{(x,u)} + \frac{\partial h_r}{\partial x}(x,u)f(x,u)\geq\\&\qquad\qquad\qquad\qquad\qquad\sigma \norm{(f(x,u), \ q_{(x,u)})}\norm{\nabla h_r(x,u)}. \notag
   \end{align}
\end{lemma}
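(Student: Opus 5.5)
The plan is to upgrade the strict inequalities of Proposition~\ref{prop:feasibility} into uniform, normalized margins by a compactness argument. First normalize. At any $(x,u)\in\Cc_r$ with $b(u)=0$ we have $\nabla b(u)\neq 0$ by Assumption~\ref{assum:differentiability, regularity and relative degree}. Hence the given $q_{(x,u)}$ satisfies $\nabla b(u)^\top q_{(x,u)}>0$, which forces $q_{(x,u)}\neq 0$. Likewise, at points with $h_r(x,u)=0$ the left-hand side of the second inequality is strictly positive. This forces $(f(x,u),q_{(x,u)})\neq 0$, and it also forces $\nabla h_r(x,u)\neq 0$ (in fact $\frac{\partial h_r}{\partial u}\neq 0$ by the relative-degree assumption). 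Therefore the pointwise ratios
\[
\frac{\nabla b(u)^\top q_{(x,u)}}{\|q_{(x,u)}\|\|\nabla b(u)\|},\qquad \frac{\frac{\partial h_r}{\partial u}(x,u)q_{(x,u)}+\frac{\partial h_r}{\partial x}(x,u)f(x,u)}{\norm{(f(x,u),q_{(x,u)})}\norm{\nabla h_r(x,u)}}
\]
are well defined and strictly positive at each relevant point. They are also at most $1$ by Cauchy--Schwarz, since the numerator of the second is $\nabla h_r^\top(f,q)$.

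Next, extend each point's good direction to a neighborhood. Fix $(\bar x,\bar u)\in\Cc_r$ and use the constant vector $\bar q:=q_{(\bar x,\bar u)}$ at nearby points. By continuity of $\nabla b$, $f$, and $\nabla h_r$ (guaranteed by the differentiability in Assumption~\ref{assum:differentiability, regularity and relative degree}), the two ratios evaluated with $\bar q$ are continuous in $(x,u)$. They therefore stay above half their value at $(\bar x,\bar u)$ on some open neighborhood $U_{(\bar x,\bar u)}$, wherever the denominators stay nonzero. The conditional structure requires care here: the inequalities only need to hold at points where $b(u)=0$ (resp.\ $h_r=0$). I will handle this as follows.

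1. If $b(\bar u)\neq 0$, shrink $U$ so that $b\neq0$ on it, making the first condition vacuous there.
2. If $b(\bar u)=0$, the ratio bound holds on $U$ by continuity.
3. Treat $h_r$ in the same way, splitting on whether $h_r(\bar x,\bar u)=0$.

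At points of $U$ where neither constraint is active, any $q$ works. Denote by $\sigma_{(\bar x,\bar u)}\in(0,1)$ the minimum of the resulting halved ratios, taking $1/2$ if both conditions are vacuous.

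Finally, cover $\Cc_r$ by the sets $U_{(\bar x,\bar u)}$ and extract a finite subcover by compactness. Set $\sigma$ to the minimum of the finitely many $\sigma_{(\bar x,\bar u)}$, capped below $1$. For $(x,u)\in\Cc_r$, choose a covering neighborhood containing it and take $q_{(x,u)}:=\bar q$ from that cover element. Since the covering neighborhoods are open, a point with $b(u)=0$ cannot lie in a neighborhood centered at a point with $b\neq0$ (by step 1), so the inequality of step 2 applies; the same holds for $h_r$. This yields~\eqref{eq:sigma-strong-mfcq}.

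The main obstacle is this bookkeeping of active sets: I must ensure that a point where $b(u)=0$ is never assigned a $\bar q$ coming from a center where $b$ was inactive. The shrinking in step 1 resolves this, with one subtlety: if the center has $b\neq 0$ but the point has $b=0$, the point cannot lie in that shrunken neighborhood at all. I also need to check that the denominators stay bounded away from zero near active centers, which follows from continuity and the nonvanishing at the center.
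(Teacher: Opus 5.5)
Your proposal is correct and follows essentially the same route as the paper. Both arguments use continuity of the normalized ratios around each point where a constraint is active, extract a finite subcover by compactness, and take $\sigma$ as the minimum of the finitely many margins. The difference is in the bookkeeping. The paper covers $\{b(u)=0\}\cap\Cc_r$ and $\{h_r(x,u)=0\}\cap\Cc_r$ separately and treats the two bullets independently. Your single cover of $\Cc_r$, with neighborhoods shrunk to preserve the active set, guarantees that one vector $q_{(x,u)}$ serves both bullets at points where $b(u)=0$ and $h_r(x,u)=0$ hold simultaneously. That joint property is what Lemma~\ref{lem:ric_bound} actually uses, and the paper's proof leaves it implicit.
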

We are now ready to state conditions for global asymptotic stability of optima.
\begin{theorem}[Global asymptotic stability]\label{thm:global_convergence}
    Let Assumptions \ref{assum:stability} and \ref{assum:differentiability, regularity and relative degree} hold, suppose $\bigcap_{i=0}^r\Sc_i$ is compact, and the assumptions of Proposition~\ref{prop:feasibility} hold over $\bigcap_{i=0}^r\Sc_i$. Assume that for all $(x,u)\in\bigcap_{i=0}^r\Sc_i$, if $b(u) = 0$ and $h_r(x,u) = 0$, then $\nabla b(u)$ and $\frac{\partial h_r}{\partial u}(x,u)$ are linearly independent. Further, assume:
    \begin{enumerate}
        \item The function $\Phi(w(\cdot))$ 
        is $c$-strongly convex~\footnote{A function $\psi:\real^n\to\real$ is $c$-strongly convex if for all $x,y\in\real^n$, $\psi(y) \geq \psi(x) + \nabla\psi(x)^\top (y-x) + \frac{c}{2}\norm{y-x}^2$.}, with $c>0$;
        \item The functions $-b(\cdot)$ and $-h_r(x,\cdot)$ are convex (the latter for any $x$);
        \item\label{it:weird-assumption} There exists a unique optimizer $(x_*,u_*)$ to the feedback-optimization problem \eqref{eq:steady-state_opti_problem}
        and it satisfies $b(u_*)>0$ and $h_r(x,u_*)>0$, for any $x$ for which there exists $u$ with $(x,u)\in\bigcap_{i=0}^r\Sc_i$; 
        \item\label{it:d_3-d_4} $d_3 - d_4(1+\frac{1}{\sigma}) l_f > 0$, where $l_f$ is the Lipschitz constant of $f$ on $\bigcap_{i=0}^r\Sc_i$, $d_3,d_4>0$ are constants associated to the open loop \eqref{eq:system}, as dictated by Lemma \ref{lem:W}, and $\sigma$ depends on $f$ and $\Sc_r$, as dictated by Lemma \ref{lem:conditions-sigma-strong-mfcq}.
    \end{enumerate}
    Then, there exist $\alpha_*,\gamma_*,\epsilon_*>0$, such that, for any $\alpha > \alpha_*$, $\gamma > \gamma_*$ and $\epsilon \in (0,\epsilon_*)$, the global optimum $(x_*,u_*)$ is globally asymptotically stable relative to $\bigcap_{i=0}^r\Sc_i$ for \eqref{eq:closed_loop_sgf}.
\end{theorem}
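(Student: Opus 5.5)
The approach is a composite Lyapunov argument: the plant-stability function $W$ from Lemma~\ref{lem:W} is combined with a controller-side function measuring suboptimality of the input. Concretely, I would take
\[
V(x,u) = \Phi(w(u)) - \Phi(w(u_*)) + \kappa\, W(x,u),
\]
with $\kappa>0$ to be tuned. Well-posedness and safety are settled first. Compactness of $\bigcap_{i=0}^r\Sc_i$ allows invoking Proposition~\ref{prop:feasibility} to obtain $\alpha_f,\gamma_f$. The linear-independence hypothesis allows invoking Proposition~\ref{prop:crcq}, which gives CRCQ for large $\alpha,\gamma$. Proposition~\ref{prop:solutions_sgf} then yields unique solutions, and Proposition~\ref{prop:safety} gives forward invariance of $\bigcap_{i=0}^r\Sc_i$. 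Hence all trajectories remain in a compact set where $f$ is $l_f$-Lipschitz. By strong convexity (item 1) and item~\ref{it:weird-assumption}, $V$ is positive definite about $(x_*,u_*)$ on this set.

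The core step is bounding $\dot V$. Let $q=g_{\epsilon,\alpha,\gamma}(x,u)$ and $\nu:=\frac{\partial w}{\partial u}(u)^\top\nabla\Phi(x)$. I would write
\[
\dot V = \nabla(\Phi\circ w)(u)^\top q + \kappa\Big(\tfrac{\partial W}{\partial x}f + \tfrac{\partial W}{\partial u}q\Big).
\]
The mismatch $\nabla(\Phi\circ w)(u)-\nu$ is bounded by $L\|x-w(u)\|$, so it suffices to study $\nu^\top q$ at the true argument. The KKT conditions of the QP give $q=-\epsilon\nu+\lambda_1\nabla b(u)+\lambda_2\frac{\partial h_r}{\partial u}^\top$ with multipliers $\lambda_i\ge0$ and complementary slackness. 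This yields
\[
-\epsilon\,\nu^\top q=\|q\|^2-\lambda_1\alpha b(u)\cdot(\ldots)-\lambda_2\big(\ldots\big),
\]
and the multiplier terms must be signed. I would handle this by comparing against the equilibrium $(w(u_*),u_*)$. Convexity of $-b$ and $-h_r(x,\cdot)$ gives
\[
\nabla b(u)^\top(u_*-u)\ge b(u_*)-b(u)>-b(u),
\]
and similarly for $h_r$, using item~\ref{it:weird-assumption} that $b(u_*)>0$ and $h_r(x,u_*)>0$ uniformly in $x$. Following the standard safe-gradient-flow argument of \cite{allibhoy2023control}, for $\alpha,\gamma$ large the active multipliers then contribute with a favorable sign, up to a term involving $\frac{\partial h_r}{\partial x}f$. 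That term is of order $l_f\|x-w(u)\|$, since $f(w(u),u)=0$.

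To close the estimate I also need $\|q\|\lesssim\epsilon\|\nu\|+(\text{plant mismatch})$, with the constant independent of $\epsilon$. This is where Lemma~\ref{lem:conditions-sigma-strong-mfcq} enters. The $\sigma$-strong MFCQ bounds the multipliers: roughly, $\lambda_2\|\frac{\partial h_r}{\partial u}\|\le \frac1\sigma(\epsilon\|\nu\|+\|f\|)$, and $\|f(x,u)\|\le l_f\|x-w(u)\|$. Substituting into the $W$-derivative via Lemma~\ref{lem:W} gives
\[
\kappa\Big(-d_3+d_4(1+\tfrac1\sigma)l_f\Big)\|x-w(u)\|^2
\]
plus cross terms $\kappa d_4\epsilon\|\nu\|\|x-w(u)\|$ and $L\|x-w(u)\|\|q\|$. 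Item~\ref{it:d_3-d_4} makes the quadratic plant term strictly negative. The controller term is at most $-\frac{1}{\epsilon}\|q\|^2$-like, or $-c'\epsilon\|\nabla(\Phi\circ w)\|^2$-like after projection. Young's inequality with $\epsilon<\epsilon_*$ then yields $\dot V\le -c_1\|x-w(u)\|^2-c_2\,\epsilon\, d(u)^2$, where $d(u)$ vanishes only at KKT points of the steady-state problem.

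Finally, I would apply LaSalle's invariance principle on the compact forward-invariant set $\bigcap_{i=0}^r\Sc_i$. The largest invariant set in $\{\dot V=0\}$ consists of equilibria with $x=w(u)$ and $q=0$. Item~\ref{it:weird-assumption} puts $u_*$ strictly inside, but other equilibria may lie on the boundary. Proposition~\ref{prop:critical_points_equilibria} identifies equilibria with critical points, and convexity together with uniqueness forces these to be $(x_*,u_*)$ alone. A residual check is needed to exclude spurious boundary equilibria. I would use the convexity-based inequality above: at an equilibrium with an active constraint, the KKT relation combined with $\nabla b^\top(u_*-u)>0$ contradicts $\nabla(\Phi\circ w)(u)^\top(u_*-u)<0$ from strong convexity. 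Stability follows from the sublevel sets of $V$. The step I expect to be the main obstacle is the multiplier and sign bookkeeping in the previous paragraph, in particular obtaining the factor $(1+\frac1\sigma)$ exactly from the $\sigma$-strong MFCQ bound on $\lambda_2$ while keeping the $\epsilon$-dependence uniform.
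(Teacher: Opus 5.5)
Your setup (well-posedness, forward invariance, compactness) is sound. So is your final exclusion of boundary equilibria, which pairs the equilibrium KKT relation with $u_*-u$ and uses concavity of $b$ and $h_r(x,\cdot)$ with item~\ref{it:weird-assumption} against monotonicity of $\nabla(\Phi\circ w)$. The gap is the step you flag, and the choice $V=\Phi(w(u))-\Phi(w(u_*))+\kappa W$ causes it. With this $V$, the QP multipliers enter $\dot V$ only through $\nu^\top q$. The KKT conditions and complementary slackness give exactly
\[
\nu^\top q=-\tfrac1\epsilon\norm{q}^2-\tfrac1\epsilon\lambda_1\alpha b(u)-\tfrac1\epsilon\lambda_2\gamma h_r(x,u)-\tfrac1\epsilon\lambda_2\tfrac{\partial h_r}{\partial x}(x,u)f(x,u).
\]
There is no factor $u-u_*$ here, so your inequality $\nabla b(u)^\top(u_*-u)\ge b(u_*)-b(u)$ and its $h_r$ analogue have nothing to act on. The last term is sign-indefinite. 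In Theorem~\ref{thm:local_convergence} it was removed only because $h_r\ge c_0>0$ on $\Gamma$, so large $\gamma$ gave $\gamma h_r+\frac{\partial h_r}{\partial x}f\ge 0$; globally the set contains $\{h_r=0\}$.

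Suppose you bound it as proposed, with $\lambda_2\norm{\frac{\partial h_r}{\partial u}}\lesssim\frac1\sigma(\epsilon\norm{\nu}+l_f\norm{x-w(u)})$. Because of the $\frac1\epsilon$ factor you overlooked, you get two terms. One is of order $\norm{\nu}\,l_f\norm{x-w(u)}$: it is linear in $\norm{x-w(u)}$, and its coefficient does not vanish at the boundary. The other is of order $\frac{1}{\epsilon}\norm{x-w(u)}^2$. The exact computation with only the $h_r$ constraint active at $h_r=0$ shows the problem is real. With $s:=-\frac{\partial h_r}{\partial x}f=O(\norm{x-w(u)})$, it gives $\nu^\top q=-\epsilon\norm{\nu_\perp}^2+s\,\frac{\partial h_r}{\partial u}\nu\,/\norm{\frac{\partial h_r}{\partial u}}^2$. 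Absorbing the positive linear term would need a quantitative lower bound on the tangential gradient along $\{h_r=0\}$ and $\kappa$ of order $1/\epsilon$. Your proposal contains no such argument, and its constants are not those of item~\ref{it:d_3-d_4}. So the estimate $\dot V\le -c_1\norm{x-w(u)}^2-c_2\epsilon d(u)^2$ is not established, and neither is the LaSalle step that rests on it.

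The paper avoids this by taking $V(u)=\frac12\norm{u-u_*}^2$ and $\tilde V=V+W$. Then $\dot V=(u-u_*)^\top g_{\epsilon,\alpha,\gamma}$, and after the KKT substitution the multipliers appear as $(u-u_*)^\top\big(\lambda_1\nabla b(u)+\lambda_2\frac{\partial h_r}{\partial u}(x,u)^\top\big)$. The term $\frac{\partial h_r}{\partial x}f$ never appears. This product is $\le 0$ for two reasons:
\begin{enumerate}
\item For $\alpha,\gamma$ large, a constraint can be active only where its function is below some $\delta$.
\item There, concavity together with $b(u_*)>\delta$ and $h_r(x,u_*)>\delta$ (item~\ref{it:weird-assumption}) gives the sign. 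This is where your convexity inequality belongs.
\end{enumerate}
Strong convexity supplies $-\epsilon c\norm{u-u_*}^2$, leaving a cross term $\epsilon L_1\norm{u-u_*}\norm{x-w(u)}$.

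What remains is an upper bound on $\norm{g_{\epsilon,\alpha,\gamma}}$ for the term $d_4\norm{g_{\epsilon,\alpha,\gamma}}\norm{x-w(u)}$ in $\dot W$. In a small ball $B_{\bar c}$ around the optimum, $g_{\epsilon,\alpha,\gamma}=-\epsilon\nu$ exactly. Outside it, Lemma~\ref{lem:ric_bound} gives $(1+\frac1\sigma)\norm{\Fc}$, but only for the limiting flow $g_\epsilon$, not for finite $\alpha,\gamma$ as your sketch implicitly assumes. Lemma~\ref{lem:ric-bound-sgf} transfers the bound with an additive $\delta$, which is absorbed because $(\norm{x-w(u)},\norm{u-u_*})$ is bounded away from zero outside $B_{\bar c}$. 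Combined with $\norm{f(x,u)}\le l_f\norm{x-w(u)}$, this is exactly where item~\ref{it:d_3-d_4} enters. The result is strict negativity of $\dot{\tilde V}$ on the invariant set away from $(x_*,u_*)$, with no LaSalle argument or equilibrium exclusion needed.
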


Let us briefly discuss the assumptions of Theorem~\ref{thm:global_convergence}. 
The convexity assumptions in items 1 and 2 are standard in practice. E.g., provided that $\Phi$ is convex and the dynamics \eqref{eq:system} are linear $\Phi(w(\cdot))$ is convex. A common case in which $-h_r(x,\cdot)$ is convex for all $x$ is if the dynamics are control-affine, since in this case, $-h_r(x,\cdot)$ is affine. Furthermore, one simple setting in which there exists a unique optimizer $(x_*, u_*)$ to~\eqref{eq:steady-state_opti_problem} (as assumed in item 3) is if, in addition to the convexity assumptions in items 1 and 2, $-h$ is convex. Indeed, in this case the problem~\eqref{eq:steady-state_opti_problem} is convex when the state variable $x$ is eliminated. 
Item~\ref{it:weird-assumption} allows for the use of convexity arguments in the proof of Theorem~\ref{thm:global_convergence}. Simulations in Section~\ref{sec:discussion-assumption-global-convergence} indicate that in certain cases, it is not necessary for global stability. Item~\ref{it:d_3-d_4} requires that the open-loop stability of system~\eqref{eq:system}, guaranteed by Assumption~\ref{assum:stability} (and quantified by $d_3$), dominates the effects of potentially destabilizing inputs (quantified by $d_4(1+\frac{1}{\sigma})l_f$), that are necessary to enforce safety. Finally, note that if the global optimum is on the boundary $h(x_*)=0$, one may employ Corollary~\ref{cor:regularization} and Theorem~\ref{thm:local_convergence} (resp., Theorem~\ref{thm:global_convergence}) to render a slightly suboptimal point locally (resp. globally) asymptotically stable.

\section{Simulations}\label{sec:simulations}
We implement the proposed controller and illustrate the theoretical results on safety, convergence to local optima, and stability. We also compare our approach with similar techniques from the state-of-the-art, discuss the necessity of the assumptions used for establishing global asymptotic stability, and illustrate the regularization procedure described in Proposition~\ref{prop:regularization}.
We consider convex and non-convex problems~\eqref{eq:steady-state_opti_problem}.

\begin{figure*}[tbh]
        \centering
        \subfigure[]{\includegraphics[height=.24\linewidth]{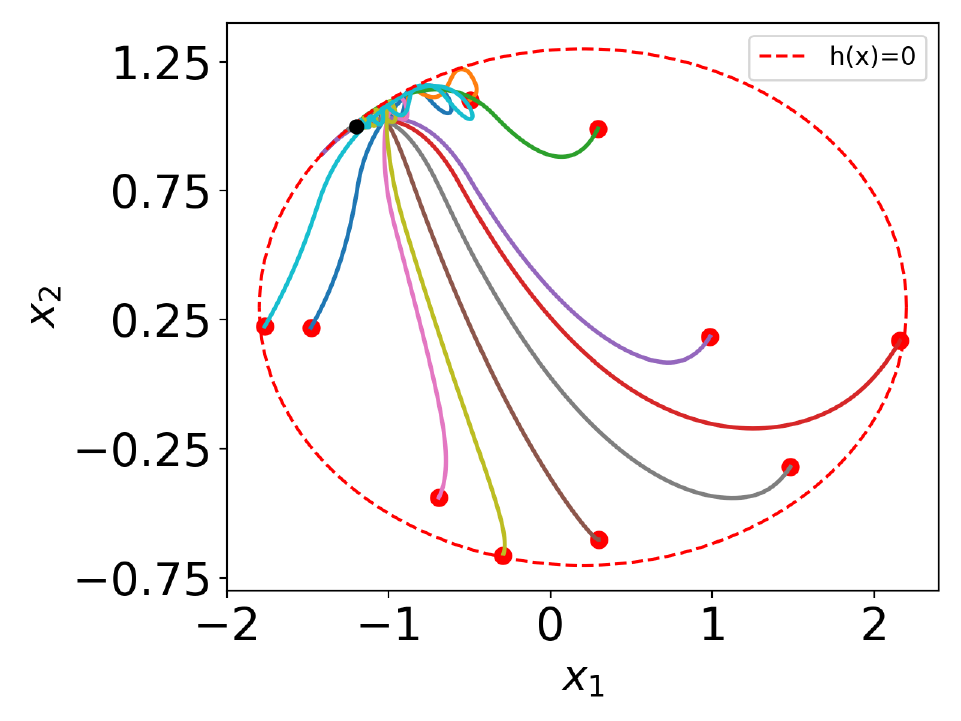}}
        \subfigure[]{\includegraphics[height=.24\linewidth]{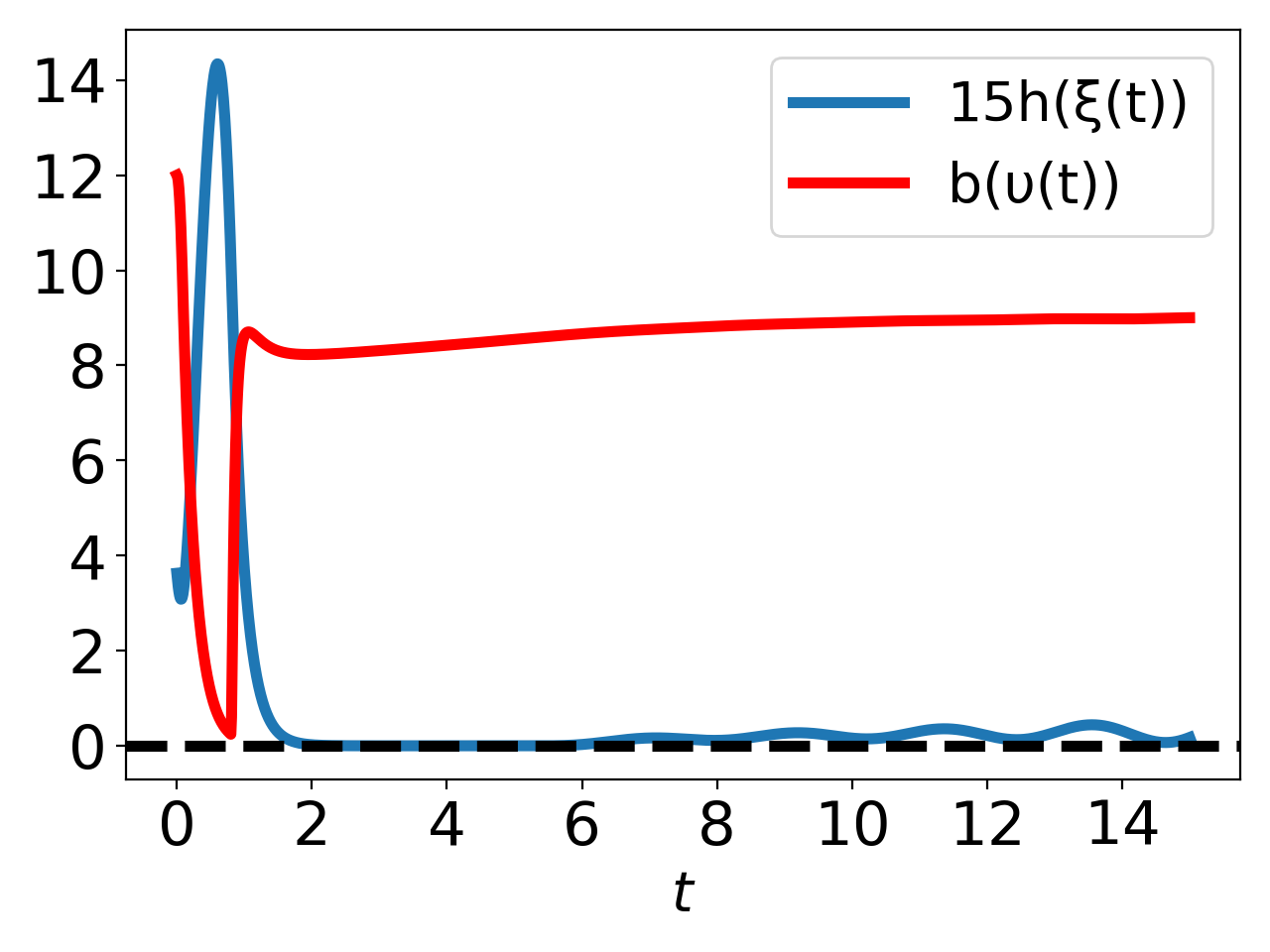}}
        \hspace*{-2ex}
        \subfigure[]{\includegraphics[height=.24\linewidth]{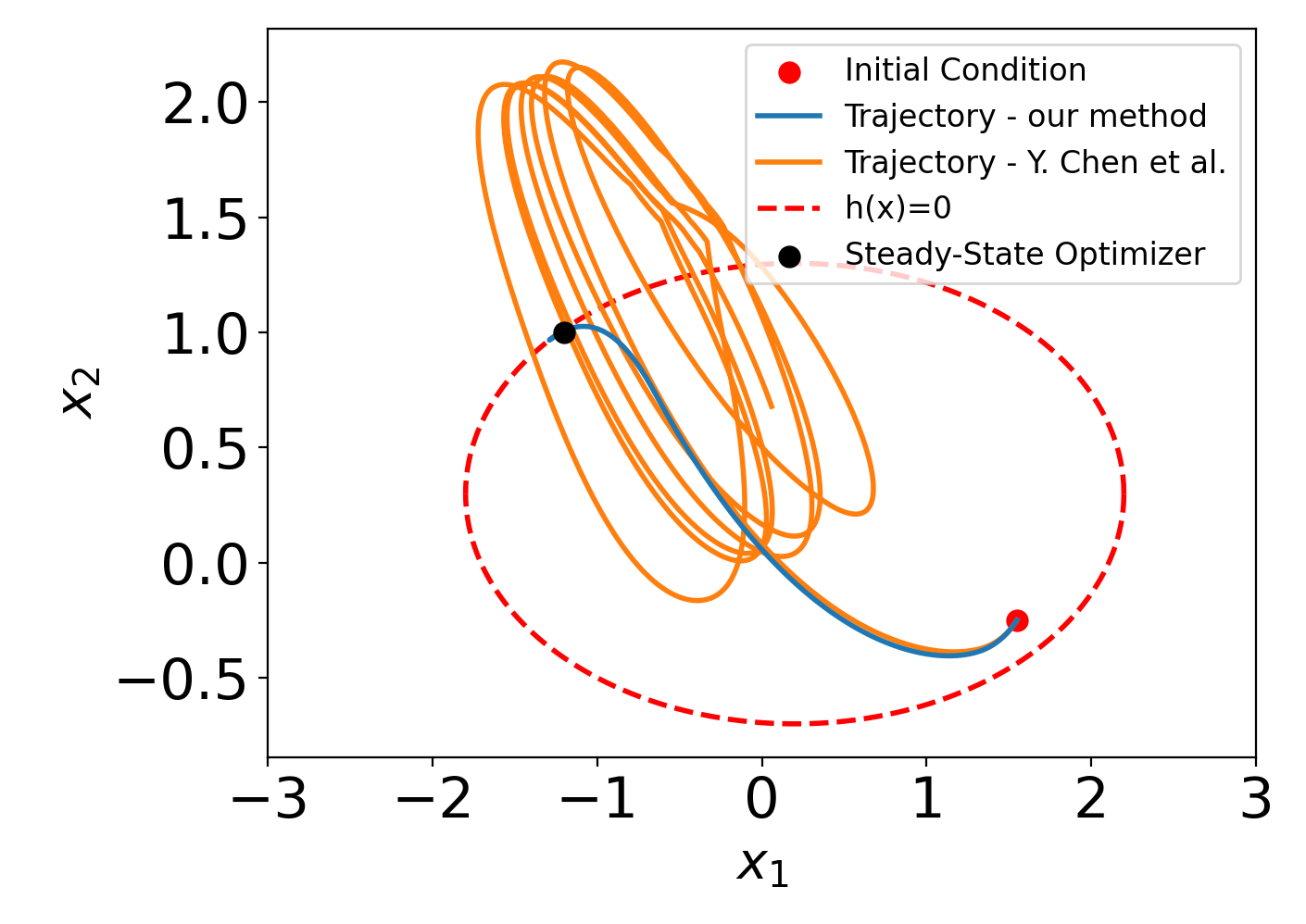}}
        \caption{Implementation of~\eqref{eq:closed_loop_sgf} in a convex feedback optimization problem (cf. Section~\ref{sec:convex-case}): (a) trajectories from different initial conditions, (b) evolution of $h$ and $b$ along the trajectory with initial condition at $x_0 = (1.55, -0.25), u_0 = (0, 0)$, and (c) comparison with the approach in~\cite{YC-LC-JC-EDA:23-csl}.}
        \label{fig:convex}
\end{figure*}

\subsection{Convex case}\label{sec:convex-case}
We consider the control system 
\begin{align*}
    \dot{\xi} = A\xi + B\upsilon,  \; A = \begin{pmatrix}
        -1.3 & -0.3 \\
        -1.4 & -0.5
    \end{pmatrix}, \; 
    B = \begin{pmatrix}
        1 & 0 \\
        0 & 1
    \end{pmatrix}.
\end{align*}
Note that it satisfies Assumption~\ref{assum:stability} and the steady-state map is $w(u) = -A^{-1}Bu$.
Consider the feedback optimization problem~\eqref{eq:steady-state_opti_problem} with objective and constraints given by
\begin{align*}
    \Phi(x) &= (x_1 + 1.2)^2 + (x_2 - 1)^2,
    \\
    h(x) &= 1 - \frac{(x_1-0.2)^2}{4} - \frac{(x_2-0.3)^2}{1}, \ b(u) = 12 - \norm{u}^2.
\end{align*}
In this case,~\eqref{eq:steady-state_opti_problem} is strictly convex and its unique minimizer is $(x_*, u_*) = (-1.2, 1.0, -1.26, -1.18)$, which satisfies $h(x_*)>0$. Figure~\ref{fig:convex} presents the result of the implementation of~\eqref{eq:closed_loop_sgf}. Figure~\ref{fig:convex}(a) shows the evolution of multiple trajectories, 
Figure~\ref{fig:convex}(b) shows the evolution of $h$ and $b$ along the trajectory with initial condition at $x_0 = (1.55, -0.25), u_0 = (0, 0)$, and Figure~\ref{fig:convex}(c) compares our approach with the one in~\cite{YC-LC-JC-EDA:23-csl}, which only guarantees anytime satisfaction of the input constraints (it enforces state constraints only asymptotically). Although both approaches converge to the steady-state optimizer, \cite{YC-LC-JC-EDA:23-csl}~violates of state constraints during transients.

\begin{figure*}[tbh]
     \centering
       \subfigure[]{\includegraphics[height=.24\linewidth]{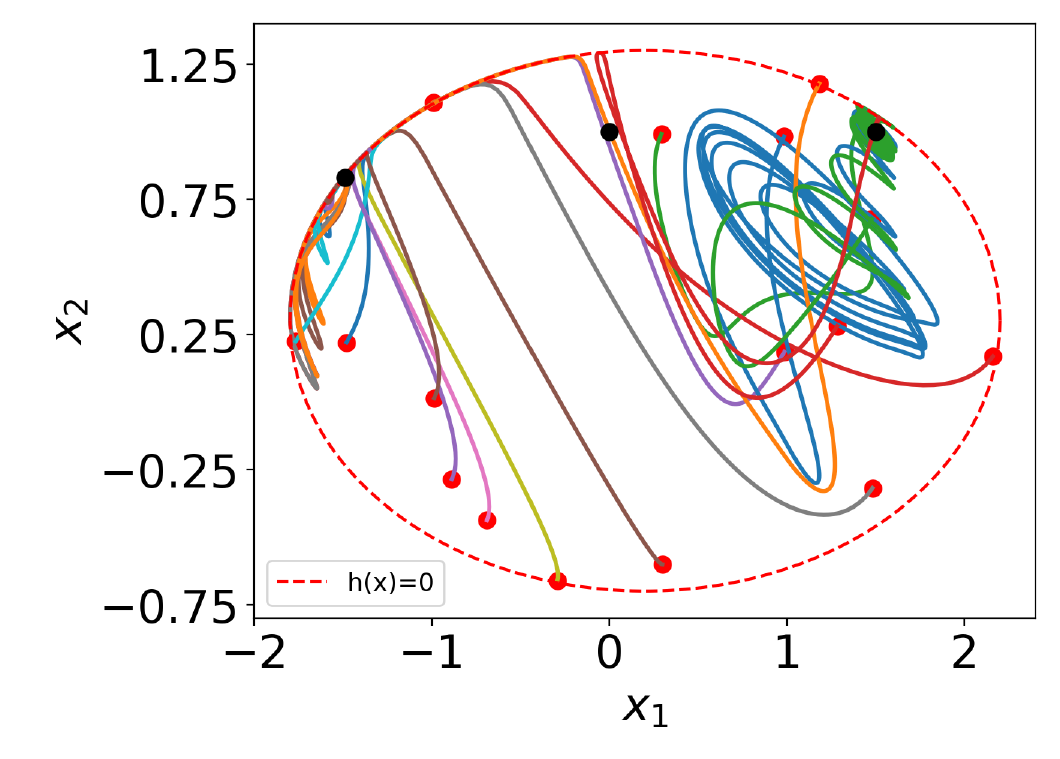}}
       \subfigure[]{\includegraphics[height=.24\linewidth]{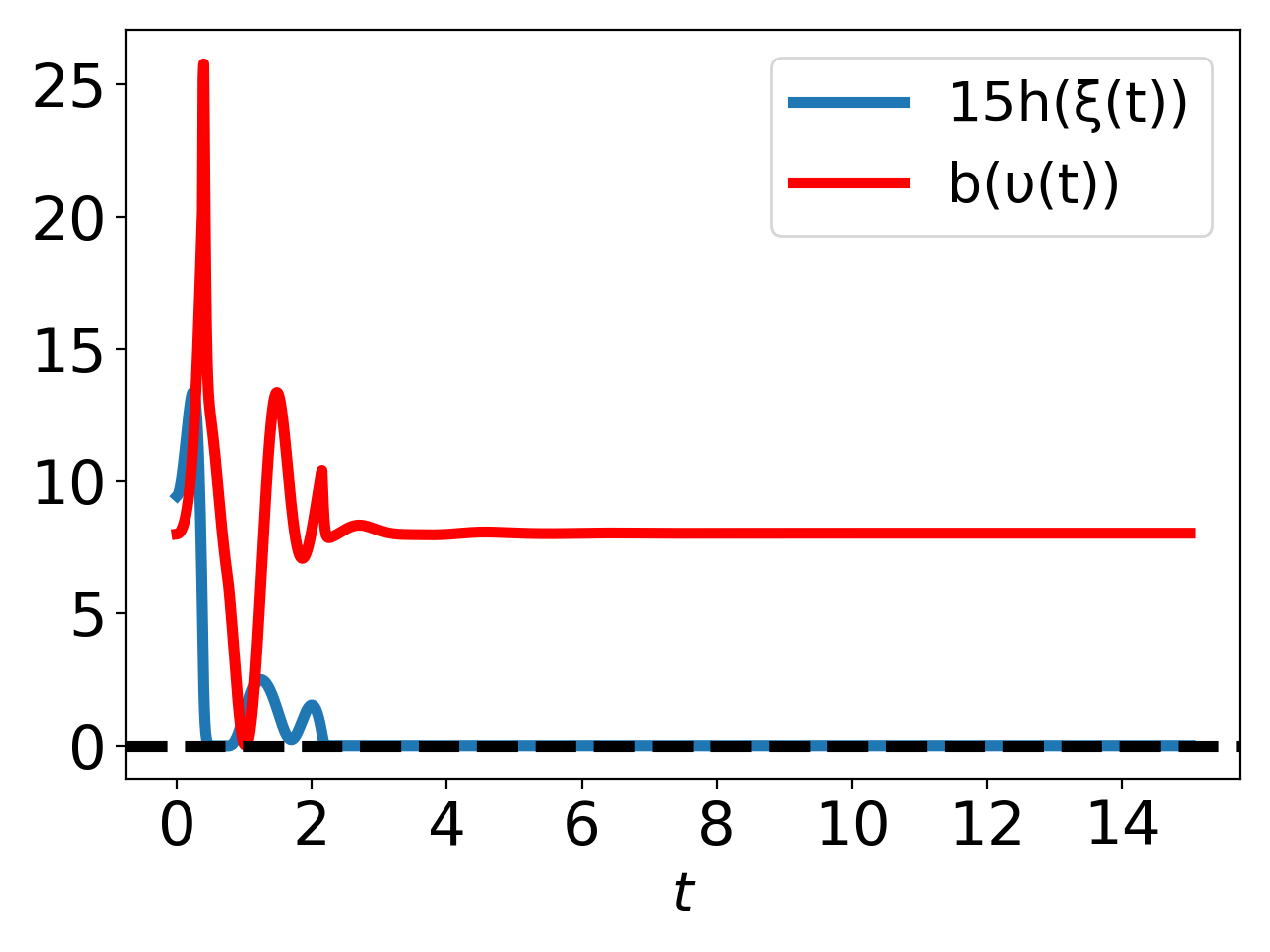}}
       \hspace*{-2ex}
       \subfigure[]{\includegraphics[height=.24\linewidth]{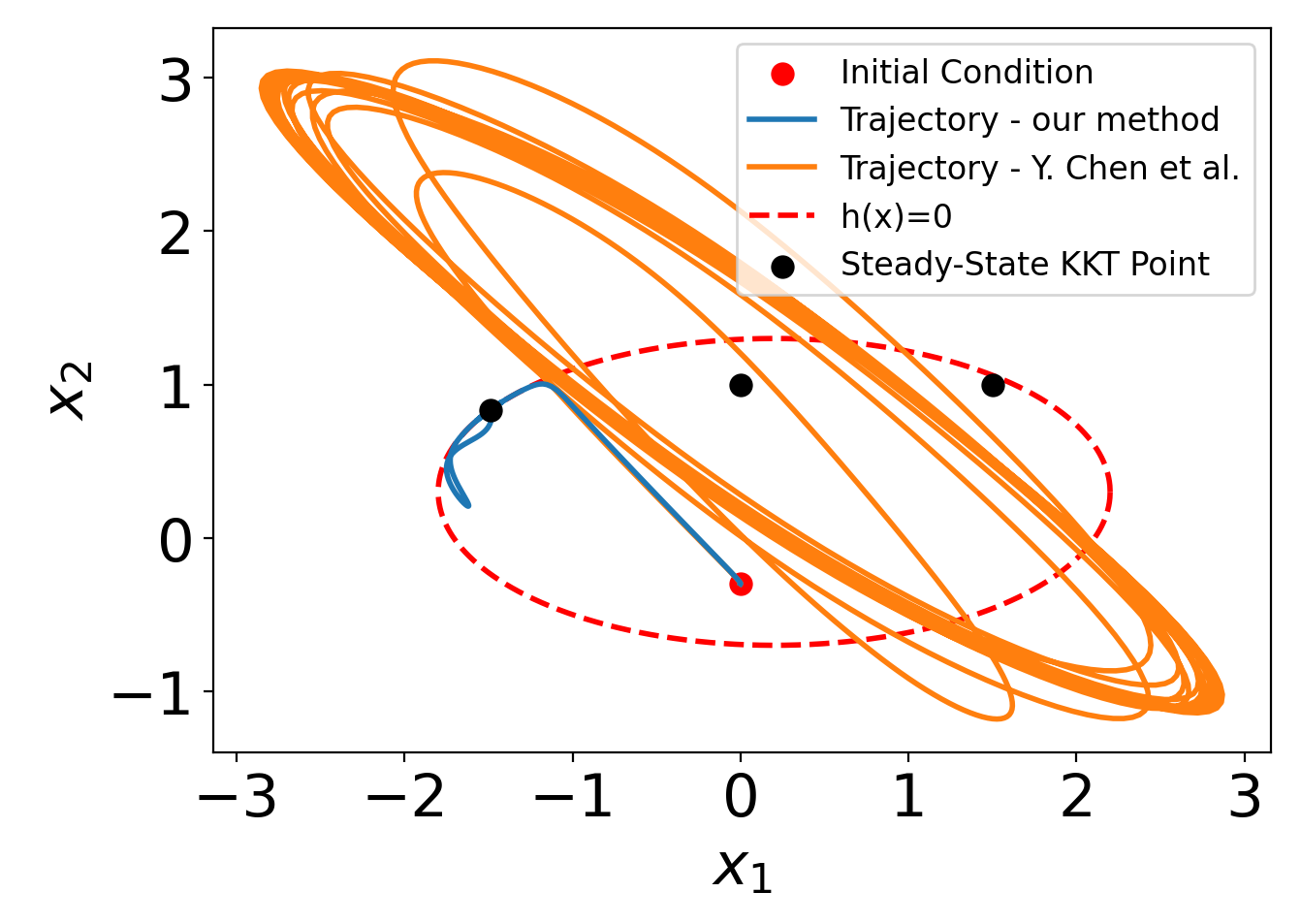}}    
     \caption{Implementation of~\eqref{eq:closed_loop_sgf} in a non-convex feedback optimization problem (cf. Section~\ref{sec:not-convex}): (a) trajectories from different initial conditions, (b) evolution of $h$ and $b$ along the trajectory with initial condition at $x_0 = (1.55, -0.25), u_0 = (0, 0)$, and (c) comparison with the approach in~\cite{YC-LC-JC-EDA:23-csl}.}\label{fig:non-convex}
\end{figure*}

\subsection{Non-convex case}\label{sec:not-convex}

We consider the same control system as in Section~\ref{sec:convex-case}.
Consider the feedback optimization problem~\eqref{eq:steady-state_opti_problem} with the same $h$ as in the convex case, but instead take
\begin{align*}
    \Phi(x) &= (x_1^2 - 1.5^2)^2 + (x_2^2-1)^2,
   \
    b(u) = 8 + u_1^2 - u_2^2.
    \end{align*}
In this case,~\eqref{eq:steady-state_opti_problem} is non-convex and has three KKT points at $(0,1), (1.5, 1.0)$ and $(-1.49, 0.8)$. 
Furthermore, $(1.5, 1.0)$ and $(-1.49, 0.8)$ are local optima, and hence, by Theorem~\ref{thm:local_convergence}, locally asymptotically stable under~\eqref{eq:closed_loop_sgf}.
This is illustrated in Figure~\ref{fig:non-convex}(a). Note that $(0,1)$ is not a local optimum, and hence it is not locally asymptotically stable.
Figure~\ref{fig:non-convex}(b) showcases the evolution of $h$ and $b$ along the trajectory with initial condition $x_0 = (0, -0.3)$, $u_0 = (0, 0)$ and Figure~\ref{fig:non-convex}(c) shows the different trajectories obtained by our approach and that of~\cite{YC-LC-JC-EDA:23-csl}, which leads again to state violations.

\subsection{Discussion on item \ref{it:weird-assumption} of Theorem~\ref{thm:global_convergence}}\label{sec:discussion-assumption-global-convergence}

Here we study the conservativeness of item~\ref{it:weird-assumption} in Theorem~\ref{thm:global_convergence} in a specific example.
Consider 
\begin{align*}
    &\dot{\xi} = A\xi + B\upsilon, \ A = \begin{pmatrix}
        -1 & 1 \\
        0 & -1
    \end{pmatrix}, \ B =\begin{pmatrix}
        0 \\ 1
    \end{pmatrix},\\
    &\Phi(x) = \norm{x-x_{\text{ss}}}^2, \ h(x) = 4 - \norm{x}^2, \ b(u) = 4 - u^2,
\end{align*}
where $x_{\text{ss}} \in \real^2$ is fixed.
Note that for any $u_* \in [-\sqrt{2}, \sqrt{2}]$, 
by taking $x_{\text{ss}} = (u_*, u_*)$, we get that the optimizer is $(x_{\text{ss}}, u_*)$.

It can be shown that 
item~\ref{it:weird-assumption} in Theorem~\ref{thm:global_convergence} holds only when the optimizer $(x_*,u_*)$ is such that $u_* \in [-1.2, 1.2]$.
Therefore, Theorem~\ref{thm:global_convergence} ensures that whenever $u_*\in[-1.2, 1.2]$, the unique optimizer of~\eqref{eq:steady-state_opti_problem} is globally asymptotically stable.
Nonetheless, we have checked numerically that the global asymptotic stability of the unique minimizer of~\eqref{eq:steady-state_opti_problem} also holds whenever $u_*\notin[-1.2,1.2]$ (see Figure~\ref{fig:trajectories-assumption-global-convergence}, where $u_*=1.4$), indicating that even if item~\ref{it:weird-assumption} in Theorem~\ref{thm:global_convergence} is not satisfied, global asymptotic stability can still hold. 

\begin{figure}[htb]
    \centering
    \includegraphics[width=0.85\linewidth]{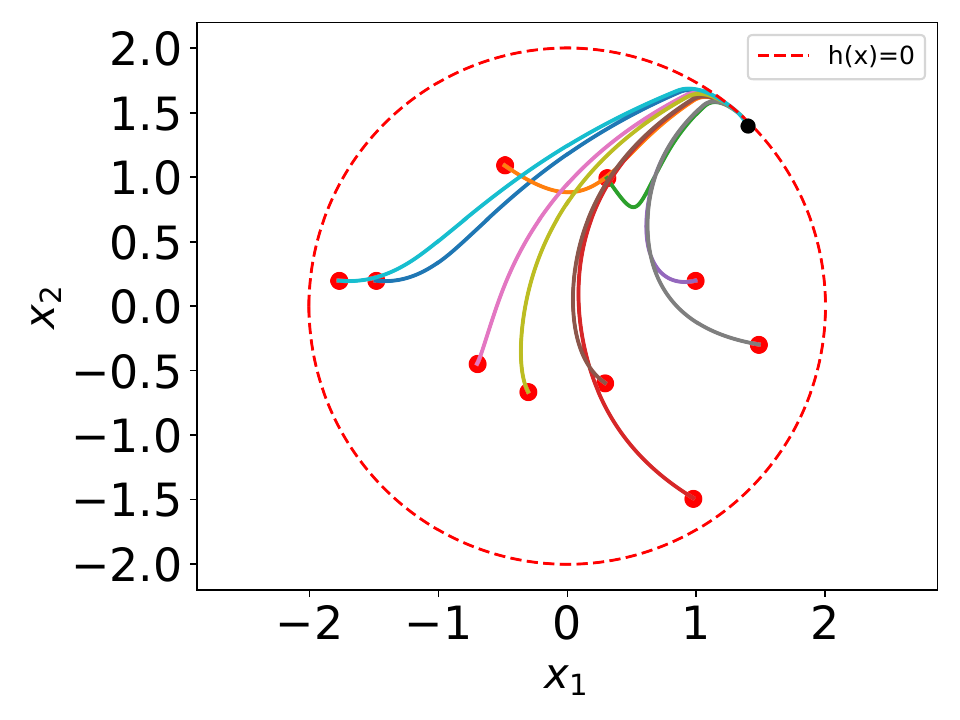}
    \includegraphics[width=0.45\linewidth]{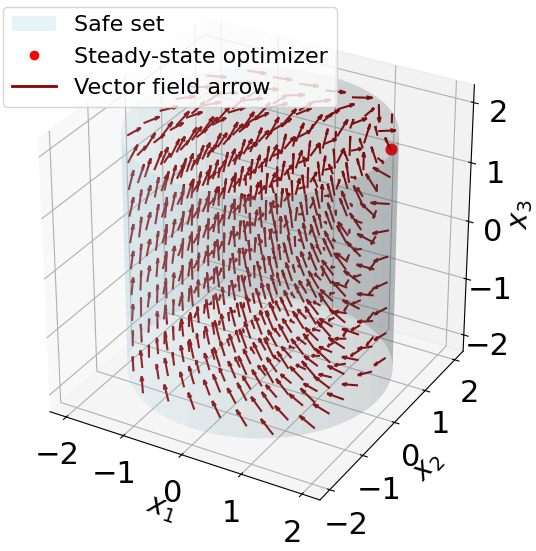}
    \includegraphics[width=0.45\linewidth]{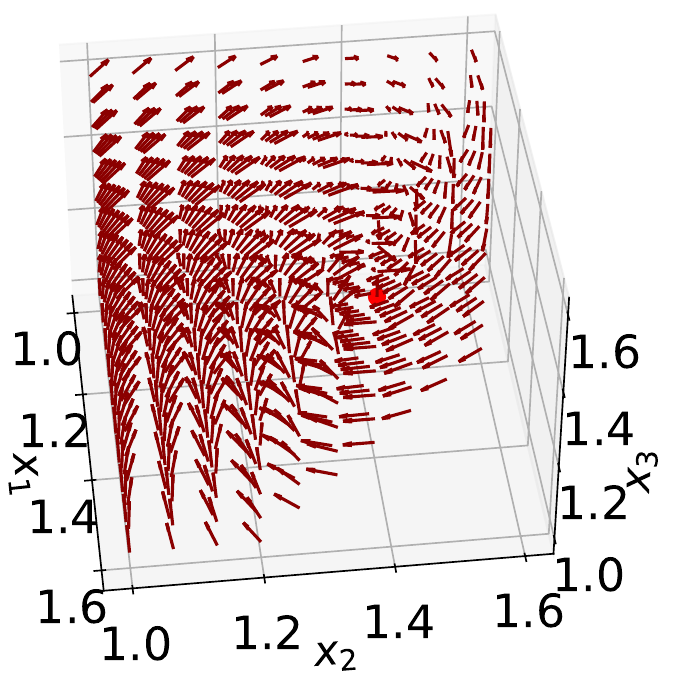}

    \caption{(top) Evolution of trajectories under~\eqref{eq:closed_loop_sgf} for the example in Section~\ref{sec:discussion-assumption-global-convergence}, showing global asymptotic stability of the unique minimizer even when item \ref{it:weird-assumption} of Theorem~\ref{thm:global_convergence} is not satisfied. (bottom left) Phase portrait of the corresponding closed-loop vector field. (bottom right) Zoomed in version of said phase portrait.}
    \label{fig:trajectories-assumption-global-convergence}
\end{figure}

\subsection{Undesirable equilibria, optimum at the boundary and regularization}\label{sec:undesirable-eq}

Here we present an example where the conditions in Proposition~\ref{prop:critical_points_equilibria}
are not satisfied and the closed-loop system~\eqref{eq:closed_loop_sgf} has equilibrium points which do not correspond to KKT points of the feedback optimization problem~\eqref{eq:steady-state_opti_problem}. Such spurious equilibria often occur in the implementation of control barrier function-based controllers~\cite{tan2021undesired,reis2020control,PM-YC-EDA-JC:25-jnls}. Consider 
\begin{align*}
    &\dot{\xi} = A\xi + B\upsilon, \ 
     A = \begin{pmatrix}
        -2 & 0 \\
        0 & -4
    \end{pmatrix}, \quad B = \begin{pmatrix}
        1 & 0 \\
        0 & 1
    \end{pmatrix},\\
    &h(x) = 1 - x_1 - x_2, \; 
    \Phi(x) = \norm{x-x_{\text{ss}}}^2, \quad x_{\text{ss}} = (2,2) .
\end{align*}
It can be shown that the optimizer of~\eqref{eq:steady-state_opti_problem} is at $(0.5, 0.5)$, whereas the closed-loop's (undesirable) equilibrium is at $(1,0)$. Figure~\ref{fig:trajectories-spurious-equilibrium} shows that $(1,0)$ is asymptotically stable.
\begin{figure}[htb]
    \centering
    \includegraphics[width=0.9\linewidth]{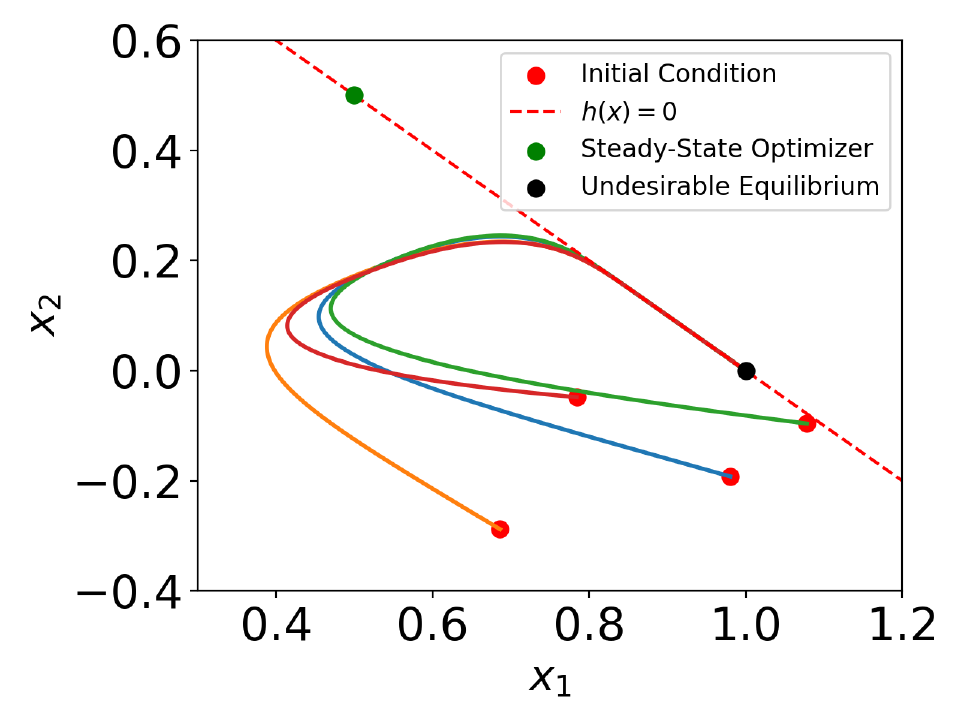}
    \caption{Evolution of various trajectories under~\eqref{eq:closed_loop_sgf} for the example in Section~\ref{sec:undesirable-eq}. The point $(1,0)$ is a locally asymptotically stable equilibrium which does not correspond to the steady state-optimizer of~\eqref{eq:steady-state_opti_problem}.}
    \label{fig:trajectories-spurious-equilibrium}
\end{figure}
Figure~\ref{fig:trajectories-spurious-regularized} shows that by adding the regularization term from Proposition~\ref{prop:regularization} (with $p = 2$, $\epsilon = 0.8$), we obtain a slightly suboptimal asymptotically stable optimizer at $(0.48, 0.48)$.
\begin{figure}[htb]
    \centering
    \includegraphics[width=0.9\linewidth]{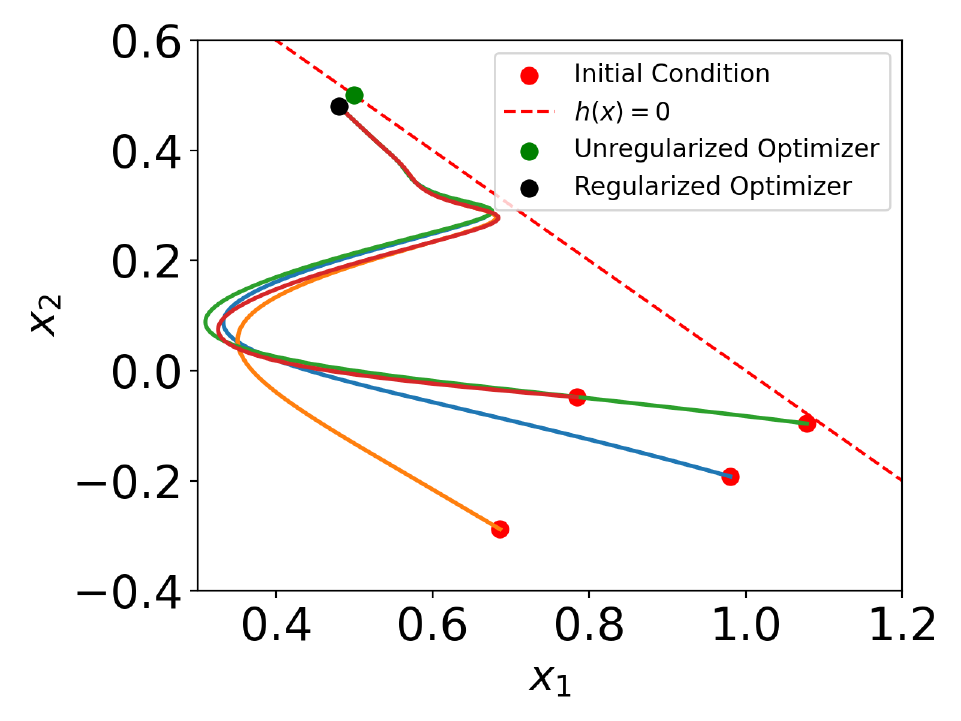}
    \caption{Evolution of various trajectories under~\eqref{eq:closed_loop_sgf} for the example in Section~\ref{sec:undesirable-eq} with the regularization term in Proposition~\ref{prop:regularization}. The point $(0.48,0.48)$ is an asymptotically stable equilibrium and a steady state-optimizer of the regularized version of~\eqref{eq:steady-state_opti_problem}.}
    \label{fig:trajectories-spurious-regularized}
\end{figure}

\section{Conclusions}
We have presented a novel feedback-optimization method that enforces state constraints at all times by employing high-order CBFs and the safe gradient flow. We have characterized the properties of the proposed controller regarding well-posedness, safety, equivalence between equilibria and critical points of the optimization problem, and local and global asymptotic stability of optima. Our approach makes use of the system dynamics to enforce safety. Future work will focus on scenarios where the dynamics are (partially) unknown. Towards this direction, we will explore the use of results in the CBF literature related to \emph{input-to-state safety}, such as \cite{kolathaya2018input}. Finally, one may also employ results as in \cite{PM-AM-JC:25-l4dc}, combining safe-gradient flows with stochastic approximation, to extend the treatment to unknown objective functions. 

\section{Acknowledgements} The authors would like to thank Riccardo Bertollo, for his insights in proving Lemma \ref{lem:ric_bound}.

\section{Proofs and auxiliary technical results}\label{sec:proofs}

{\color{black}

\begin{proof}[\textbf{Proof of Proposition~\ref{prop:feasibility}}]
    For $(x,u)\in\partial\Cc_r$, let $\Nc_{(x,u)}$ be an open neighborhood of $(x,u)$ such that, for all $(\bar{x},\bar{u})\in\Nc_{(x,u)}$,
    \begin{itemize}
        \item if $b(u)=0$, then $\nabla^\top b(\bar{u}) q_{(x,u)} > 0$;
        \item if $h_r(x,u)=0$, then $\frac{\partial h_r}{\partial u}(\bar{x},\bar{u})q_{(x,u)} + \frac{\partial h_r}{\partial x}(\bar{x},\bar{u})f(\bar{x},\bar{u})>0$.
    \end{itemize}
    Note that such a neighborhood $\Nc_{(x,u)}$ exists by continuity of $\nabla b$, $\frac{\partial h_r}{\partial u}$ and $(x,u)\mapsto \frac{\partial h_r}{\partial x}(x,u)f(x,u)$.
     
    Now, let $S_1 := \setdef{(x,u)\in\Cc_r}{b(u) = 0}$, $S_2 := \setdef{(x,u)\in\Cc_r}{h_r(x,u) = 0}$, $S_3 :=S_1\cap S_2$, and
    consider the following three sets:
    \begin{align*}
        &\Nc_1 = \hspace{-2mm}\bigcup_{ (x,u)\in S_1 } \hspace{-2mm}\Nc_{(x,u)}, \ \Nc_2 = \hspace{-2mm}\bigcup_{ (x,u)\in S_2 }\hspace{-2mm} \Nc_{(x,u)}, \ \Nc_3 = \hspace{-2mm}\bigcup_{ (x,u)\in S_3 } \hspace{-2mm}\Nc_{(x,u)}.
    \end{align*}
    Note that $S_1, S_2, S_3$ are compact because $\Cc_r$ is compact and the functions $b$ and $h_r$ are continuous.
    Since $\Nc_1$ is a cover of $S_1$, we can extract a finite sub-cover $\Nc_{f,1} = \bigcup_{i=1}^{N_1} \Nc_{(x_i,u_i)}$ of $S_1$ with $b(u_i) = 0$ for all $i\in [N_1]$ and $N_1 \in \mathbb{Z}_{>0}$.
    Similarly, we can extract finite sub-covers 
    $\Nc_{f,2} = \bigcup_{i=1}^{N_2} \Nc_{(x_i,u_i)}$ of $S_2$, with $h_r(x_i,u_i) = 0$ for all $i\in[N_2]$ and $N_2 \in \mathbb{Z}_{>0}$, and 
    $\Nc_{f,3} = \bigcup_{i=1}^{N_3} \Nc_{(x_i,u_i)}$ of $S_3$, with $b(u_i) = h_r(x_i,u_i) = 0$ for all $i\in[N_3]$ and $N_3 \in \mathbb{Z}_{>0}$.

    Since $b$ is continuous and $S_1$ is compact, there exists $c_1>0$ such that,
    for any $(x,u)\in\Cc_r$ with
    $b(u) \in [0, c_1]$, we have $(x,u)\in\Nc_{f,1}$. Indeed, if this was not the case, there would exist a sequence of positive numbers $\{ \bar{c}_i \}_{i\in\mathbb{Z}_{>0}}$ with $\lim\limits_{i\to\infty} \bar{c}_i = 0$ and points $(\bar{x}_i,\bar{u}_i)\in \Cc_r$ such that $b(\bar{u}_i) \in [0, \bar{c}_i]$ and $(\bar{x}_i,\bar{u}_i)\notin\Nc_{f,1}$. As $\Cc_r\setminus\Nc_{f,1}$ is compact, a subsequence of these points converges to a point $(x_*,u_*)\in \Cc_r\setminus\Nc_{f,1}$ with $b(u_*)=0$; this is a contradiction, as $b(u_*)=0$ implies $(x_*,u_*)\in S_1\subseteq \Nc_{f,1}$. Similarly, there exists $c_2>0$ such that for any $(x,u)\in\Cc_r$ with $h_r(x,u)\in[0,c_2]$, then $(x,u)\in\Nc_{f,2}$; and $c_3>0$ such that, for any $(x,u)\in\Cc_r$ with $b(u) \in [0,c_3]$ and $h_r(x,u)\in[0,c_3]$, then $(x,u)\in\Nc_{f,3}$.

    Next, we consider four cases:
    \newline
  \mbox{} \quad 1) Let $Q_1 = \setdef{(x,u)\in \Cc_r}{b(u)\in[0,c_1], h_r(x,u)>c_3 }$. Since $Q_1 \subseteq \Nc_{f,1}$, for each $(x,u)\in Q_1$, we know that there exists $i\in[N_1]$ such that $\nabla b(u)^\top q_{(x_i,u_i)} + \alpha b(u) > 0$ for all $\alpha\geq 0$. Define
        \begin{align*}
            \gamma_{f,1} \! = \! \frac{1}{c_3} \max\limits_{ 
            \substack{
            (x,u)\in \Cc_r, \\ i\in[N_1]
            }
            } 
            \!
            \Big| &\frac{\partial h_r}{\partial u}(x,u) q_{(x_i,u_i)}
            \! + \! \frac{\partial h_r}{\partial x}(x,u)f(x,u) \Big|.
        \end{align*}
        Since $\Cc_r$ is compact, $\frac{\partial h_r}{\partial u}$ and $(x,u)\mapsto\frac{\partial h_r}{\partial x}(x,u)f(x,u)$ are continuous, and $N_1$ is finite, $\gamma_{f,1}$ is finite. Therefore, $\frac{\partial h_r}{\partial u}(x,u) q_{(x_i,u_i)} + \frac{\partial h_r}{\partial x}(x,u)f(x,u) + \gamma h_r(x,u) > 0$, for all $\gamma > \gamma_{f,1}$. Thus, QP \eqref{eq:sgf_QP} is feasible in $Q_1$, for all $\alpha\geq0$, $\gamma > \gamma_{f,1}$.
        \newline
        \mbox{} \quad 2) Let $Q_2 = \setdef{(x,u)\in\Cc_r}{ h_r(x,u)\in[0,c_2], b(u)>c_3 }$. Since $Q_2 \subseteq \Nc_{f,2}$, for each $(x,u)\in Q_2$, we know that there exists $i\in[N_2]$ such that
        \begin{equation*}
            \frac{\partial h_r}{\partial u}(x,u) q_{(x_i,u_i)} + \frac{\partial h_r}{\partial x}(x,u)f(x,u) + \gamma h_r(x,u) > 0,
        \end{equation*}
        for any $\gamma\geq0$. Define 
        \begin{align*}
            \alpha_{f,1} := \frac{ \max\limits_{ (x,u)\in \Cc_r, i\in[N_2] } |\nabla b(u)^\top q_{(x_i,u_i)}| }{c_3}.
        \end{align*}
        Since $\Cc_r$ is compact, $\nabla b$ is continuous, and $N_2$ is finite, $\alpha_{f,1}$ is finite. Therefore, $\nabla b(u)^\top q_{(x_i,u_i)} + \alpha b(u) \geq 0$ for any $\alpha > \alpha_{f,1}$. Thus, QP \eqref{eq:sgf_QP} is feasible in $Q_2$, for all $\alpha > \alpha_{f,1}$, $\gamma\geq0$.
        \newline
        \mbox{} \quad 3) Let $Q_3 = \setdef{(x,u)\in\Cc_r}{h_r(x,u)\in[0,c_3], b(u)\in[0,c_3]}$. Since $Q_3 \subseteq \Nc_{f,3}$, 
        then, for each $(x,u)\in Q_3$,
        there exists $i\in[N_3]$ such that 
        \begin{align*}
            &\nabla b(u)^\top q_{(x_i,u_i)} + \alpha b(u) > 0, \\
            &\frac{\partial h_r}{\partial u}(x,u) q_{(x_i,u_i)} + \frac{\partial h_r}{\partial x}(x,u)f(x,u) + \gamma h_r(x,u) > 0,
        \end{align*}
        for all $\alpha\geq 0$, $\gamma\geq0$. Thus, QP \eqref{eq:sgf_QP} is feasible in $Q_3$, for all $\alpha\geq 0$, $\gamma\geq0$.
\newline
        \mbox{} \quad 4) Let $Q_4 = \Cc_r \setminus ( Q_1 \cup Q_2 \cup Q_3 )$.
        Note that for all $(x,u) \in Q_4$, we have 
        $b(u) > \min\{ c_1, c_3 \} > 0$ and 
        $h_r(x,u) > \min\{ c_2, c_3 \} > 0$. Define 
        \begin{align*}
            \gamma_{f,2} = \frac{ \max\limits_{ (x,u)\in (\Sc\times\mathcal{U})\cap\Cc_r } | \frac{\partial h_r}{\partial x}(x,u)f(x,u) | }{ \min\{ c_2, c_3 \} }.
        \end{align*}
        Since $\Cc_r$ is compact, and $\frac{\partial h_r}{\partial x}$ and $f$ are continuous, $\gamma_{f,2}$ is finite. Now, for all $(x,u)\in Q_4$ we have 
        \begin{align*}
            &\nabla b(u)^\top \mathbf{0}_m + \alpha b(u) > 0, \\
            &\frac{\partial h_r}{\partial u}(x,u)^\top \mathbf{0}_m + \frac{\partial h_r}{\partial x}(x,u)f(x,u) + \gamma h_r(x,u) > 0.
        \end{align*}
        for all $\alpha \geq 0$ and $\gamma \geq \gamma_{f,2}$. Thus, QP \eqref{eq:sgf_QP} is feasible in $Q_4$, for all $\alpha > 0$, $\gamma > \gamma_{f,2}$.
   
    The result follows by observing that $\Cc_r = Q_1\cup Q_2 \cup Q_3 \cup Q_4$, and taking $\alpha_f > \alpha_{f,1}$, $\gamma_f > \max\{ \gamma_{f,1}, \gamma_{f,2} \}$.
\end{proof}

\begin{proof}[\textbf{Proof of Proposition \ref{prop:solutions_sgf}}]
    Note that the assumptions of Proposition~\ref{prop:solutions_sgf} include those of Proposition~\ref{prop:feasibility}.
    Hence, by following the argument used in Proposition~\ref{prop:feasibility}, by taking $\alpha > \alpha_f$, $\gamma > \gamma_f$,~\eqref{eq:sgf_QP} is strictly feasible (i.e., for any $(x,u)\in\Cc_r$, there exists $q\in\real^m$ satisfying each of the constraints strictly).
    Thus, Slater's condition holds for~\eqref{eq:sgf_QP}. Since~\eqref{eq:sgf_QP} is a strongly convex QP, by~\cite[Proposition 5.39]{NA-AE-MP:20}, we have that MFCQ holds for~\eqref{eq:sgf_QP} at the optimizer. 
    Furthermore, 
    since CRCQ holds at every $(x,u)\in\Cc_r$ for~\eqref{eq:sgf_QP},
    the result follows from~\cite[Theorem 3.6]{JL:95}.
\end{proof}

\begin{proof}[\textbf{Proof of Proposition \ref{prop:crcq}}]
    Since $b$ and $h_r$ are continuously differentiable, $\mathcal{C}_r$ is compact, 
    $b(u) = 0$ implies $\nabla b(u) \neq 0$ (by item~\ref{it:diff-regularity-second} in Assumption~\ref{assum:differentiability, regularity and relative degree})
    and $h_r(x,u) = 0$ implies $\frac{\partial h_r}{\partial u}(x,u) \neq 0$ (by item~\ref{it:diff-regularity-first} in Assumption~\ref{assum:differentiability, regularity and relative degree}),
    there exist $\delta_1, \delta_2, \delta_3 >0$ sufficiently small such that 
    \begin{itemize}
        \item $b(u) \in [0,\delta_1]$ implies $\nabla b(u) \neq 0$;
        \item $h_r(x,u) \in [0,\delta_2]$ implies $\frac{\partial h_r}{\partial u}(x,u) \neq 0$;
        \item $b(u) \in [0,\delta_3]$ and $h_r(x,u) \in [0,\delta_3]$ implies that $\nabla b(u)$, $\frac{\partial h_r}{\partial u}(x,u)$ are linearly independent.
    \end{itemize}

    To prove the last point, suppose that it does not hold. Then, there exists a sequence of points $\{ (x_i, u_i) \}_{i\in\mathbb{Z}_{>0}}$
    and a sequence of strictly positive numbers $\{ \delta_i \}_{i\in\mathbb{Z}_{>0}}$,
    such that $b(u_i)\in[0,\delta_i]$, $h_r(x_i,u_i)\in[0,\delta_i]$, and $\nabla b(u_i)$, $\frac{\partial h_r}{\partial u}(x_i,u_i)$ are linearly dependent. Then, we have 
    $|\nabla b(u_i)^\top \frac{\partial h_r}{\partial u}(x_i,u_i)| - \norm{ \nabla b(u_i) } \norm{\frac{\partial h_r}{\partial u}(x_i,u_i)} = 0$ for all $i\in\mathbb{Z}_{>0}$.
    Since $\mathcal{C}_r$ is compact, there exists a convergent subsequence $( x_{i_k}, u_{i_k} )_{k\in\mathbb{Z}_{>0}}$.
    Note that necessarily, $(x_{\infty},u_{\infty}):=\lim\limits_{k\to\infty} (x_{i_k}, u_{i_k})$ satisfies $b(u_{\infty}) = h_r(x_{\infty},u_{\infty}) = 0$.
    Moreover, by continuity of $\nabla b$ and $\frac{\partial h_r}{\partial u}$, we have 
    \begin{align*}
        |\nabla b(u_{\infty})^\top \frac{\partial h_r}{\partial u}(x_{\infty},u_{\infty}) | - \| \nabla b(u_{\infty}) \| \Big\| \frac{\partial h_r}{\partial u}(x_{\infty},u_{\infty}) \Big\| =& 
        \\
        \lim\limits_{k\to\infty} |\nabla b(u_{i_k})^\top \frac{\partial h_r}{\partial u}(x_{i_k},u_{i_k})| - \norm{ \nabla b(u_{i_k}) } \Big\| \frac{\partial h_r}{\partial u}(x_{i_k},u_{i_k}) \Big\| \ & \\
        = 0,\ &
    \end{align*}
    contradicting the assumption that if $b(u) = 0$ and $h_r(x,u) = 0$, then $\nabla b(u)$ and $\frac{\partial h_r}{\partial u}(x,u)$ are linearly independent.

    Let $\delta = \min \{ \delta_1, \delta_2, \delta_3 \}$.
    Now, define 
    \begin{align*}
        g_b(x,u) &= \argmin\limits_{q} \frac{1}{2}\norm{q + \epsilon \frac{\partial w}{\partial u}(u)^\top \nabla\Phi(x) }^2 \\
        &\quad \text{s.t.} \ \nabla b(u)^\top q + \alpha b(u) \geq 0.
    \end{align*}
    Note that $q = 0$ is feasible for the optimization problem defining $g_b$ for all $(x,u)\in\mathcal{C}_r$.
    Therefore,
    \begin{align*}
        &\norm{g_b(x,u)} - \norm{\epsilon \frac{\partial w}{\partial u}(u)^\top \nabla\Phi(x)} \leq \\
        &\norm{g_b(x,u) + \epsilon \frac{\partial w}{\partial u}(u)^\top \nabla\Phi(x)} \leq \norm{0 + \epsilon \frac{\partial w}{\partial u}(u)^\top \nabla\Phi(x)},
    \end{align*}
    which implies that $\norm{g_b(x,u)} \leq 2\epsilon \norm{\frac{\partial w}{\partial u}(u)^\top \nabla\Phi(x)}$.
    Since $\frac{\partial w}{\partial u}$ and $\nabla \Phi$ are continuous, there exists $M_b>0$ (independent of $\alpha$) such that $\norm{g_b(x,u)} \leq M_b$ for all $(x,u)\in\mathcal{C}_r$.
    Now, let
    \begin{align*}
        \gamma > \frac{ \max\limits_{(x,u)\in\mathcal{C}_r } |\frac{\partial h_r}{\partial u}(x,u)^\top g_b(x,u) + \frac{\partial h_r}{\partial x}(x,u)^\top f(x,u)|} {\delta} =: \gamma_1.
    \end{align*}
    Note that $\gamma_1$ is finite because $\norm{g_b(x,u)} \leq M_b$ for all $(x,u)\in\mathcal{C}_r$, $\mathcal{C}_r$ is compact, and $\frac{\partial h_r}{\partial u}$, $\frac{\partial h_r}{\partial x}$, and $f$ are continuous.
    
    With this selection of $\gamma$, 
    the CBF constraint associated with $h_r$ in~\eqref{eq:sgf_QP} is not active when $h_r(x,u) \geq \delta$.
    Indeed, note that since~\eqref{eq:sgf_QP} has the same objective function as the optimization problem defining $g_b$ and one additional constraint, we necessarily have that: 
    \begin{align*}
        \| g_{b}( x, \! u) \! + \! \epsilon \! \frac{\partial w}{\partial u}(u)^\top \! \nabla\Phi(x) \| \leq \!
        \| g_{\epsilon,\alpha,\gamma}(x, \! u) \! + \! \epsilon \! \frac{\partial w}{\partial u}(u)^\top \! \nabla\Phi(x) \|.
    \end{align*}
    However, since our selection of $\gamma$ ensures that $g_b$ is feasible for~\eqref{eq:sgf_QP} whenever $h_r(x,u)\geq\delta$,
    we necessarily have that $g_{\epsilon}(x,u) = g_b(x,u)$ and the constraint associated with $h_r$ is not active at the optimizer
    $g_{\epsilon}(x,u)$. Similarly, let 
    \begin{align*}
        &g_h(x,u) = \argmin\limits_{q} \frac{1}{2}\norm{q + \frac{\partial w}{\partial u}(u)^\top \nabla\Phi(x) }^2 
        \\
        &\quad \quad \text{s.t.} \ \frac{\partial h_r}{\partial u}(x,u)^\top q + \frac{\partial h_r}{\partial x}(x,u)f(x,u) + \gamma h_r(x,u) \geq 0.
    \end{align*}
    If $h_r(x,u) \in [0,\delta]$,
    \begin{align*}
        q = \frac{ 2 | \frac{\partial h_r}{\partial x}(x,u) f(x,u) | }{ \norm{\frac{\partial h_r}{\partial u}(x,u)}^2 }\frac{\partial h_r}{\partial u}(x,u)
    \end{align*}
    is feasible for the optimization problem defining $g_h$. On the other hand, by taking 
    \begin{align}\label{eq:beta-condition-2}
        \gamma > \frac{ \max\limits_{(x,u)\in\mathcal{C}_r} |\frac{\partial h_r}{\partial x}(x,u) f(x,u)| }{\delta} =: \gamma_2,
    \end{align}
    $q = 0$ is feasible for the optimization problem defining $g_h$, when $h_r(x,u)>\delta$.
    Therefore, by an analogous argument as in the case of $g_b$, there exists $M_h$ (independent of $\gamma$ provided that $\gamma$ satisfies~\eqref{eq:beta-condition-2}) such that $\norm{g_h(x,u)} \leq M_h$ for all $(x,u)\in\mathcal{C}_r$. Hence, by letting 
    \begin{align*}
        \alpha > \frac{ \max\limits_{(x,u)\in\mathcal{C}_r} |\nabla b(u)^\top g_h(x,u)| }{\delta} =: \alpha_0,
    \end{align*}
    we get that the CBF constraint associated with $b$ in~\eqref{eq:sgf_QP} is not active when $b(u) \geq \delta$.
    Let $\gamma_0 := \max\{ \gamma_1, \gamma_2 \}$.
    Suppose that $\alpha > \alpha_0$, $\gamma > \gamma_0$.

    We now have the following cases:
    \begin{itemize}
        \item if $b(u) > \delta$, $h_r(x,u) > \delta$, then the constraints of~\eqref{eq:sgf_QP} are not active at the optimizer and CRCQ holds;
        \item if $b(u) \leq \delta$, $h_r(x,u) > \delta$, only the CBF constraint associated with $b$ in~\eqref{eq:sgf_QP} can be active. If it is, CRCQ holds because $\nabla b(u) \neq 0$;
        \item if $b(u) > \delta$, $h_r(x,u) \leq \delta$, only the CBF constraint associated with $h_r$ in~\eqref{eq:sgf_QP} can be active at the optimizer.
        If it is, CRCQ holds, because $\frac{\partial h_r}{\partial u}(x,u) \neq 0$;
        \item if $b(u) \leq \delta$, $h_r(x,u) \leq \delta$, both constraints in~\eqref{eq:sgf_QP} can be active at the optimizer. Since $\nabla b$ and $\frac{\partial h_r}{\partial u}$ are linearly independent, $\nabla b(u) \neq 0$ and $\frac{\partial h_r}{\partial u} \neq 0$, CRCQ holds.
    \end{itemize}
    Therefore, CRCQ holds for all $(x,u)\in\mathcal{C}_r$.
\end{proof}

\begin{proof}[\textbf{Proof of Proposition \ref{prop:safety}}]
    Through standard CBF arguments (see e.g.~\cite{xiao2021high}), as solutions are unique, we obtain that $\Sc_r$ is forward invariant. Now, if we have $h_{r-1}(\xi(0),\upsilon(0))\geq 0$ and $h_r(\xi(t),\upsilon(t))\geq 0$ for all $t\geq 0$, again through standard CBF arguments, we have that $h_{r-1}(\xi(t),\upsilon(t))\geq 0$ for all $t\geq 0$, due to the definition of $h_r$. Thus, the set $\{(x,u):h_r(x,u)\geq 0, \ h_{r-1}(x,u)\geq 0\}$ is forward invariant. Using this argument recursively, we conclude that $\bigcap\limits_{i=0}^r\Sc_i$ is forward invariant.
\end{proof}

\begin{proof}[\textbf{Proof of Proposition \ref{prop:feasible-points-lie-in-bigcap}}]
    Since $(x_*,u_*)$ is feasible for~\eqref{eq:steady-state_opti_problem}, $h(x_*) \geq 0$ and $b(u_*) \ge 0$. Therefore, $(x_*,u_*)\in\Sc_0$.
    Now, note that $h_1(x_*,u_*) = \nabla h(x_*,u_*)^T f(x_*,u_*) + \beta h(x_*)$, and since $f(x_*,u_*)=f(w(u_*),u_*)=0$ and $\beta > 0$, we obtain $h_1(x_*,u_*) \geq 0$.
    In fact, for all $i\in[r]$, $h_i(x_*,u_*) = \beta h_{i-1}(x_*,u_*)$, implying that $h_i(x_*,u_*) \geq 0$ for all $i\in[r]$ and hence $(x_*,u_*)\in\bigcap\limits_{i=0}^r \Sc_i$.
\end{proof}

\begin{proof}[\textbf{Proof of Proposition \ref{prop:critical_points_equilibria}}]
    \textbf{Proof of $\implies$.} 
    Let $(x_*,u_*)$ be a critical point of~\eqref{eq:steady-state_opti_problem}.
    First, because $x_*=w(u_*)$, we have that $f(x_*,u_*)=f(w(u_*),u_*)=0$, from Assumption \ref{assum:differentiability, regularity and relative degree}. Since $(x_*,u_*)$ is a KKT point of~\eqref{eq:steady-state_opti_problem}, there exist $\lambda_h,\lambda_b\geq 0$ and $\mu\in\real^n$ such that the KKT conditions for \eqref{eq:steady-state_opti_problem} are satisfied
    \begin{align}\label{eq:equiibria_proof_kkt_feedback_opti}
        &\begin{bmatrix}
            \nabla \Phi(x_*) \\ 0 
        \end{bmatrix}  - \lambda_h \begin{bmatrix}
            \nabla h(x_*) \\ 0 
        \end{bmatrix} - \lambda_b \begin{bmatrix}
            0 \\ \nabla b(u_*) 
        \end{bmatrix} + \notag
        \\ 
        &\qquad\qquad\qquad\qquad\qquad \begin{bmatrix}
            \mu\\ -\dfrac{\partial w}{\partial u}(u_*)^\top\mu
        \end{bmatrix} = 0,\\
        &0\leq \lambda_h \perp h(x_*)\geq0, \quad 0\leq \lambda_b \perp b(u_*)\geq0, \quad x_* = w(u_*) \notag
    \end{align}
    Further, at $(x_*,u_*)$, we have
    \begin{equation*}
        g_{\epsilon,\alpha,\gamma}(x_*,u_*)=\left\{\begin{aligned}&\argmin_{q}\text{ }\frac{1}{2}\|q +\epsilon\frac{\partial w}{\partial u}(u_*)^\top \nabla\Phi(x_*)\|^2\\
            &\quad\mathrm{s.t.:} \text{ } \nabla^\top b(u_*)\cdot q +\alpha b(u_*)\geq 0\\
            &\quad\quad\text{ }\begin{aligned} \underbrace{\frac{\partial h_r}{\partial u}(x_*,u_*)}_{\tfrac{\partial}{\partial u}\lie^r_{f}h(x_*,u_*)}\cdot q+\gamma h_{r}(x_*,u_*)\geq 0&\end{aligned}\end{aligned}\right.
    \end{equation*}
    Let $\tilde{f}:\real^m\to\real^n$ be defined by $\tilde{f}(u):=f(w(u),u)$.
    By definition of $w$, $\tilde{f}(u) = 0$ for all $u\in\real^m$.
    Thus $\dfrac{\partial \tilde{f}}{\partial u}(u)=0$ is equivalent to
    $\dfrac{\partial f}{\partial u}(w(u),u)=-\dfrac{\partial f}{\partial x}(w(u),u)\dfrac{\partial w}{\partial u}(u)$. By employing this relationship, we obtain
    \begin{equation}\label{eq:using_partial_of_w}
        \Big(\dfrac{\partial}{\partial u}\lie^r_{f}h(x_*,u_*)\Big)^\top = -\dfrac{\partial w}{\partial u}(u_*)^\top\Big(\dfrac{\partial f}{\partial x}(x_*,u_*)\Big)^r\nabla h(x_*) .
    \end{equation}
    
    The KKT conditions for the \eqref{eq:sgf_QP} QP are as follows
    \begin{equation}\label{eq:equiibria_proof_kkt_controller}
    \begin{aligned}
        &q_* +\epsilon\Big(\lambda_h\frac{\partial w}{\partial u}(u_*)^\top \nabla h(x_*) +\lambda_b\nabla b(u_*) \Big)- \lambda'_b\nabla b(u_*) +\\ &\qquad\qquad\qquad\quad\lambda'_h\dfrac{\partial w}{\partial u}(u_*)^\top\Big(\dfrac{\partial f}{\partial x}(x_*,u_*)\Big)^r\nabla h(x_*)= 0\\
        &0\leq\lambda'_b \perp \nabla^\top b(u_*)\cdot q_* +\alpha b(u_*)\geq 0\\ 
        &0\leq\lambda'_h \perp \frac{\partial \lie^r_{f}h}{\partial u}(x_*,u_*)\cdot q_*+\beta^{r}\gamma h(x_*)\geq0\\
    \end{aligned}
    \end{equation}
    where $\lambda'_h,\lambda'_b$ are Lagrange multipliers. Note that here, we have used \eqref{eq:equiibria_proof_kkt_feedback_opti} to replace $\frac{\partial w}{\partial u}(u_*)^\top \nabla\Phi(x_*)$ by  $\lambda_h\frac{\partial w}{\partial u}(u_*)^\top \nabla h(x_*) +\lambda_b\nabla b(u_*)$; employed \eqref{eq:using_partial_of_w}, and relied on \eqref{eq:h_i} and $f(x_*,u_*) = 0$ to deduce $h_r(x_*,u_*) = \beta h_{r-1}(x_*,u_*) = \beta^2 h_{r-2}(x_*,u_*)=\dots=\beta^r h(x_*)$. If we can find $\lambda'_h,\lambda'_b \geq 0$ such that \eqref{eq:equiibria_proof_kkt_controller} holds with $q_* = 0$, the statement is proven, as the KKT conditions are sufficient for global optimality of the QP. 
    \begin{itemize}
        \item If condition (1) holds, i.e., $h(x_*)>0$, then $\lambda_h = 0$. Picking $\lambda'_h = 0$, $\lambda'_b = \epsilon\lambda_b$, \eqref{eq:equiibria_proof_kkt_controller} is satisfied with $q_* = 0$.
        \item If any of (2) or (3) holds, then picking $\lambda'_h = -\frac{\epsilon}{e^r}\lambda_h$, $\lambda'_b = \lambda_b$, we see that \eqref{eq:equiibria_proof_kkt_controller} is satisfied with $q_* = 0$.
    \end{itemize}

    \textbf{Proof of $\impliedby$.} As $(x_*,u_*)$ is an equilibrium of \eqref{eq:closed_loop_sgf}, we have $f(x_*, u_*)=0$, and thus $x_* = w(u_*)$ and $g_{\epsilon,\alpha,\gamma}(x_*, u_*) = 0$. 
    Since either CRCQ or MFCQ holds at $(x_*,u_*)$,
    through the KKT conditions of the \eqref{eq:sgf_QP} QP, and noting that $q_* = g_{\epsilon,\alpha,\gamma}(x_*, u_*) = 0$, we obtain that there exist $\lambda'_h,\lambda'_b\geq 0$ that satisfy the following conditions
    \begin{equation}\label{eq:equilibria_proof_controller_kkt2}
    \begin{aligned}
        &\epsilon\frac{\partial w}{\partial u}(u_*)^\top \nabla\Phi(x_*)- \lambda'_b\nabla b(u_*) +\\ &\qquad\qquad\qquad\lambda'_h\dfrac{\partial w}{\partial u}(u_*)^\top\Big(\dfrac{\partial f}{\partial x}(x_*,u_*)\Big)^r\nabla h(x_*)= 0\\
        &0\leq\lambda'_b \perp \alpha b(u_*)\geq 0, \quad 0\leq\lambda'_h \perp \beta^{r}\gamma h(x_*)\geq0\\
    \end{aligned}
    \end{equation}
    where we used \eqref{eq:using_partial_of_w} and that $h_r(x_*,u_*) = \beta h_{r-1}(x_*,u_*) = \beta^2 h_{r-2}(x_*,u_*)=\dots=\beta^r h(x_*)$. To prove the statement, we need to find $\lambda_h,\lambda_b\geq 0$ and $\mu\in\real^n$ such that \eqref{eq:equiibria_proof_kkt_feedback_opti} holds. We pick $\mu = -\nabla\Phi(x_*) + \lambda_h \nabla h(x_*)$, and \eqref{eq:equiibria_proof_kkt_feedback_opti} becomes:
    \begin{equation*}
    \begin{aligned}
        &\begin{bmatrix}
            0 \\ 0 
        \end{bmatrix}  - \lambda_h \begin{bmatrix}
            0 \\ 0 
        \end{bmatrix} - \lambda_b \begin{bmatrix}
            0 \\ \nabla b(u_*) 
        \end{bmatrix} +\\ &\qquad\qquad\qquad\begin{bmatrix}
            0\\ -\dfrac{\partial w}{\partial u}(u_*)^\top(-\nabla\Phi(x_*) + \lambda_h \nabla h(x_*))
        \end{bmatrix} = 0\\
        &0\leq \lambda_h \perp h(x_*)\geq0, \quad 0\leq \lambda_b \perp b(u_*)\geq0, \quad x_* = w(u_*)
    \end{aligned}
    \end{equation*}
    Employing the first line of \eqref{eq:equilibria_proof_controller_kkt2}, the above becomes
    \begin{align}\label{eq:equiibria_proof_kkt_feedback_opti2}
        &-\lambda_b\nabla b(u_*) -\dfrac{\partial w}{\partial u}(u_*)^\top\lambda_h \nabla h(x_*) +\frac{\lambda'_b}{\epsilon}\nabla b(u_*) \notag
        \\
        &\qquad\qquad  -\frac{\lambda'_h}{\epsilon}\dfrac{\partial w}{\partial u}(u_*)^\top\Big(\dfrac{\partial f}{\partial x}(x_*,u_*)\Big)^r\nabla h(x_*) = 0
        \\
        &0\leq \lambda_h \perp h(x_*)\geq0, \quad 0\leq \lambda_b \perp b(u_*)\geq0, \quad x_* = w(u_*) . \notag 
    \end{align}
    Again, we have the following cases:
    \begin{itemize}
        \item If condition (1) holds, i.e., $h(x_*)>0$, then $\lambda'_h = 0$. Picking $\lambda_h = 0$, $\lambda_b = \frac{\lambda_b'}{\epsilon}$, we see that \eqref{eq:equiibria_proof_kkt_feedback_opti2} holds.
        \item If any of (2) or (3) holds, then picking $\lambda_h = \frac{-e^r\lambda'_h}{\epsilon}$, $\lambda_b = \frac{\lambda_b'}{\epsilon}$, we see that \eqref{eq:equiibria_proof_kkt_feedback_opti2} holds.
    \end{itemize}
    This completes the proof.
\end{proof}

\begin{proof}[\textbf{Proof of Proposition \ref{prop:regularization}}]
    Let us start by showing that here exists a sequence of points 
    $\{ (x^{(i)}, u^{(i)}) \}_{i\in\mathbb{N}}$ such that $\lim_{i\to\infty}(x^{(i)}, u^{(i)}) = (x_*,u_*)$
    and $h( w(u^{(i)}) ) > 0$, $b(u^{(i)}) \geq 0$ for all $i\in\mathbb{N}$.
    Indeed, consider the case when $b(u_*) > 0$. By MFCQ, 
    there exists $v_1\in\real^n, \tilde{v}_1\in\real^m$ such that $\nabla h(x_*)^\top v_1 > 0$
    and $v_1 = \frac{\partial w}{\partial u}(u_*) \tilde{v}_1$ (this last equality follows from MFCQ applied to the equality constraint $x = w(u)$). Hence, since $h(w( u_* + s \tilde{v}_1 )) = h(w(u_*)) + s \tilde{v}_1^\top \frac{\partial w}{\partial u}(u_*)^\top \nabla h( w(u_*) ) + \mathcal{O}(s^2) = 
    h(x_*) + s v_1^\top \nabla h( x_* ) + \mathcal{O}(s^2)
    $,
    there exists $s_{*,1} > 0$ sufficiently small such that $h( w(u_* + s \tilde{v}_1) ) > 0$ and $b(u_* + s v_1) > 0$ for all $s \leq s_{*,1}$.
    Alternatively, if $b(u_*) = 0$, by MFCQ, there exist $v_2\in\real^n$ and $v_3\in\real^m$ such that $\nabla h(x_*)^\top v_2 > 0$ and $\nabla b(u_*)^\top v_3 > 0$. 
    Now, by a similar argument as before
    there exists $s_{*,2} > 0$ sufficiently small such that $h(w(u_* + sv_2)) > 0$ and $b(u_* + s v_3) > 0$ for all $s \leq s_{*,2}$.
    In either case, by considering sufficiently small values of $s$, there exists a sequence of points 
    $\{ (x^{(i)}, u^{(i)}) \}_{i\in\mathbb{N}}$ such that $\lim_{i\to\infty}(x^{(i)}, u^{(i)}) = (x_*,u_*)$
    and $h( w(u^{(i)}) ) > 0$, $b(u^{(i)}) \geq 0$ for all $i\in\mathbb{N}$.
    Since $h$, $w$ and $\Phi$ are continuous,
    for any $c>0$, there exists $u_{c}\in\real^m$ such that $h(w(u_{c})) > 0$, $b(u_{c}) \geq 0$ and $|\Phi( w(u_{c}) )-\Phi( w(u_*) )| < c$. Now, let $\varepsilon := h( w(u_{c}) )$ and $p > 0$ such that 
    $\Phi(w(u_{c})) < \Phi(w(u_*)) + p\varepsilon^2$. Note that $(w(u_c),u_{c})$ is feasible for~\eqref{eq:perturbed-optimization-problem}
    and the value of the objective function is $\Phi( w(u_{c}) )$.
    On the other hand, since $h(w(u_*)) = 0$, 
    the optimizer of
    \begin{align}\label{eq:perturbed-optimization-problem-2}
    \notag
    &\min\limits_{u\in\real^m} \Phi(w(u)) + p (\varepsilon-h(w(u)))^2 \\
    &\quad\text{s.t.} \quad b(u) \geq 0, \ h(w(u)) = 0.
    \end{align}
    is $u_*$. This means that for any $u\in\real^m$ such that $h(w(u)) = 0$, the value of the cost function in~\eqref{eq:perturbed-optimization-problem} is at least $\Phi(w(u_*)) + p\varepsilon^2$.
    Since $\Phi(w(u_*)) + p\varepsilon^2 > \Phi(w(u_{c}))$, it follows that the optimizer $u_*'$ of~\eqref{eq:perturbed-optimization-problem}  can not satisfy $h(w(u_*')) = 0$.
    Now, since $u^{\prime}_*$ is the optimizer of~\eqref{eq:perturbed-optimization-problem},
    we have
    \begin{align*}
        \Phi(w(u_*')) + p (\varepsilon-h(w(u_*')))^2 \leq \Phi( w( u_{c} ) ).
    \end{align*}
    This implies that $\Phi(w(u_*')) \leq \Phi( w( u_{c} ) )$
    and we have
    \begin{align*}
        \Phi(w( u_*) ) \leq \Phi( w(u_*') ) \leq \Phi( w(u_{c}) ).
    \end{align*}
    Since $|\Phi( w(u_{c}) )-\Phi( w(u_*) )| < c$, this in turn implies that 
    $|\Phi( w(u_*') )-\Phi( w(u_*) )| < c$.
\end{proof}

\begin{proof}[\textbf{Proof of Theorem \ref{thm:local_convergence}}]
    Consider the function $V:\Nc\to\real$ defined by $V(x,u):= \Phi(w(u))-\Phi(w(u_*))+W(x,u)$, where $W$ is given by Lemma \ref{lem:W}. Let us show that $V$ is a Lyapunov function. Note that $V(x_*,u_*) = 0$, as $x_*=w(u_*)$. Moreover, since
    $\Phi(w(u))>\Phi(w(u_*))$ for all $(x,u)\in\Nc \cap\Sc_r \setminus\{(x_*,u_*)\}$,
    $V(x,u) > 0$ for all $(x,u)\in \Nc \cap\Sc_r \setminus\{ (x_*,u_*) \}$. 
    Note that
    \begin{equation*}
       \frac{d}{dt} \Phi(w(\upsilon(t)))=  \nabla^\top\Phi(w(\upsilon(t))) \frac{\partial w}{\partial u}(\upsilon(t)) g_{\epsilon,\alpha,\gamma}(\xi(t),\upsilon(t)),
    \end{equation*}
    where $(\xi(t),\upsilon(t))$ is a solution to \eqref{eq:closed_loop_sgf}, with initial condition $(\xi(0),\upsilon(0))\in \Sc_r$. 
    Notice that $b(\upsilon(t))\geq 0$ and $h_r(\xi(t),\upsilon(t))\geq 0$ for all time, from forward invariance of $\Sc_r$ (see the proof of Proposition \ref{prop:safety}). In the following, we drop the dependence of $\xi$ and $\upsilon$ on $t$ for convenience.
    Let $\Gamma\subseteq \Nc\cap\Sc_r$ be a sublevel set of $V$ such that $h_r(x,u) > 0$ for all $(x,u)\in\Gamma$.\footnote{Such a sublevel set exists because: a) $V$ has compact sublevel sets due to $\Phi$ having compact sublevel sets and $W$ being radially unbounded; and b) $(x_*,u_*)$ satisfies $h_r(x_*,u_*)=\beta^r h(x_*)>0$, and due to continuity of $h_r$, $\Gamma$ can be picked small enough so that $h_r(x,u)>0$ for all $(x,u)\in\Gamma$.} Since $\nabla \Phi$ and $\frac{\partial w}{\partial u}$ are locally Lipschitz, and $\Gamma$ is compact, there exists $L>0$ such that for all $(x,u),(x^\prime,u)\in\Gamma$, we have
    $\|\nabla^\top \Phi(x)\frac{\partial w}{\partial u}(u)-\nabla^\top \Phi(x')\frac{\partial w}{\partial u}(u)\|\leq L\|x-x'\|$. Then, whenever $(\xi(t),\upsilon(t))\in\Gamma$, we have
    \begin{align}\label{eq:giannis1}
         &\frac{d}{dt} \Phi(w(\upsilon)) \notag
        \\
        &= \big(\nabla^\top\Phi(w(\upsilon))\cdot\frac{\partial w}{\partial u}(\upsilon)-\nabla^\top\Phi(\xi)\cdot\frac{\partial w}{\partial u}(\upsilon)\big)g_{\epsilon,\alpha,\gamma}(\xi,\upsilon) \notag
        \\
        & \quad + \nabla^\top\Phi(\xi)\cdot\frac{\partial w}{\partial u}(\upsilon)g_{\epsilon,\alpha,\gamma}(\xi,\upsilon)
        \\
        & \leq
        L \|\xi-w(\upsilon)\|\|g_{\epsilon,\alpha,\gamma}(\xi,\upsilon)\|
         +\nabla^\top\Phi(\xi)\cdot\frac{\partial w}{\partial u}(\upsilon)g_{\epsilon,\alpha,\gamma}(\xi,\upsilon). \notag
    \end{align}
    Now, from the KKT conditions of~\eqref{eq:sgf_QP}, we have
   \begin{align}\label{eq:giannis_kkt1}
        g_{\epsilon,\alpha,\gamma}(x,u) +  \epsilon\frac{\partial w}{\partial u}(u)^\top\nabla\Phi(x)&-\lambda'_b(x,u)\nabla b(u) \\&- \lambda'_h(x,u) \frac{\partial h_r}{\partial u}(x,u)^\top = 0  \notag
    \end{align}
    where $\lambda'_b,\lambda'_h$ are Lagrange multipliers satisfying
\begin{align}\label{eq:giannis_kkt2}
        &0\leq\lambda'_b(x,u)\perp \nabla^\top b(u)g_{\epsilon,\alpha,\gamma}(x,u) + \alpha b(u)\geq 0, 
        \\
        &0\leq\lambda'_h(x,u)\perp \frac{\partial h_r}{\partial u}(x,u)g_{\epsilon,\alpha,\gamma}(x,u) + \frac{\partial h_r}{\partial x}(x,u)f(x,u) \notag
        \\
        &\qquad\qquad\qquad\qquad\qquad\qquad\qquad\qquad+\gamma h_r(x,u)\geq 0 . \notag
    \end{align}
    Using these, for the second term of \eqref{eq:giannis1}, we obtain
    \begin{align}\label{eq:giannis2}
    & \nabla^\top\Phi(\xi)\cdot\frac{\partial w}{\partial u}(\upsilon)g_{\epsilon,\alpha,\gamma}(\xi,\upsilon) = \frac{1}{\epsilon} \Big(\lambda'_b(\xi,\upsilon)\nabla b(\upsilon) \notag
    \\
    & \quad -g_{\epsilon,\alpha,\gamma}(\xi,\upsilon) + \lambda'_h(\xi,\upsilon)\frac{\partial h_r}{\partial u}(\xi,\upsilon(t))\Big)^\top g_{\epsilon,\alpha,\gamma}(\xi,\upsilon)
    \notag
    \\
    &=- \frac{1}{\epsilon} \|g_{\epsilon,\alpha,\gamma}(\xi,\upsilon)\|^2 - \frac{1}{\epsilon}\lambda'_b(\xi,\upsilon)\alpha b(\upsilon) 
    \\
    & \quad -
    \frac{1}{\epsilon}\lambda'_h(\xi,\upsilon)\gamma h_r(\xi,\upsilon)-\frac{1}{\epsilon}\lambda'_h(\xi,\upsilon)\frac{\partial h_r}{\partial x}(\xi,\upsilon)f(\xi,\upsilon), \notag
    \end{align} 
    whenever $(\xi(t),\upsilon(t))\in\Gamma$, where in the first step we used \eqref{eq:giannis_kkt1} and in the second step we used \eqref{eq:giannis_kkt2}.
    Now, since $h_r(x,u) > 0$ for all $(x,u)\in\Gamma$, and $\Gamma$ is compact,
    there exists $c_0>0$ such that $h_r(x,u)\geq c_0$
    for all $(x,u)\in\Gamma$. Moreover, since $\Gamma$ is compact and $\frac{\partial h_r}{\partial x}$ and $f$ are continuous, there exists $c_1>0$ such that 
    $\frac{\partial h_r}{\partial x}(x,u)f(x,u) \geq -c_1$ for all $(x,u)\in\Gamma$. Hence, by taking $\gamma \geq \gamma_* := \frac{c_1}{c_0}$, 
    we have $\gamma h_r(x,u) + \frac{\partial h_r}{\partial x}(x,u)f(x,u) \geq 0$ for all $(x,u)\in\Gamma$, which, using~\eqref{eq:giannis2} implies that 
    \begin{align*}
        \nabla^\top\Phi(\xi)\cdot\frac{\partial w}{\partial u}(\upsilon)g_{\epsilon,\alpha,\gamma}(\xi,\upsilon) \leq -\frac{1}{\epsilon}\|g_{\epsilon,\alpha,\gamma}(\xi,\upsilon)\|^2,
    \end{align*}
    whenever $(\xi,\upsilon)\in\Gamma$,
    where we have also used the fact that $b(\upsilon(t)) \geq 0$ for all $t\geq0$.
    Note also that, from Lemma~\ref{lem:W},
    \begin{equation*}
        \begin{aligned}
            \frac{d}{dt}W(\xi,\upsilon) \! \leq \! & - \! d_3 \! \norm{\xi-w(\upsilon)}^2 \! + \! d_4 \! \norm{ g_{\epsilon,\alpha,\gamma}(\xi,\upsilon)} \norm{\xi-w(\upsilon)}.
        \end{aligned}
    \end{equation*}
    Hence, 
    whenever $(\xi(t),\upsilon(t))\in\Gamma$, we have $\frac{d}{dt}V(\xi(t),\upsilon(t)) \leq \zeta(t)^T P_{\epsilon} \zeta(t)$, where $\zeta(t) = [ \norm{g_{\epsilon,\alpha,\gamma}(\xi(t),\upsilon(t))}, \norm{\xi(t)-w(\upsilon(t))} ]$
    and 
    \begin{align*}
        P_{\epsilon} = 
        \begin{pmatrix}
            -\frac{1}{\epsilon} & \frac{L+d_4}{2} \\
            \frac{L+d_4}{2} & -d_3
        \end{pmatrix} .
    \end{align*}
    Note that $\text{det}(P_{\epsilon}) = \frac{d_3}{\epsilon}-\big( \frac{L+d_4}{2} \big)^2$ and hence, by taking $\epsilon < \epsilon_* := \frac{4d_3}{ (L+d_4)^2 }$, $P_{\epsilon}$ is negative definite. Therefore, by taking $\epsilon\in(0,\epsilon_*)$ and $\gamma\geq \gamma_*$, $\frac{d}{dt}V(\xi(t),\upsilon(t)) < 0$ whenever $(\xi(t),\upsilon(t))\in\Gamma$, unless $g_{\epsilon,\alpha,\gamma}(\xi,\upsilon)=0$ and $\xi-w(\upsilon)=0$. Now, for any point $(x,u)\in \Gamma$, such that $x=w(u)$, we have that $h(x) = \frac{1}{\beta^r}h_r(x,u)>0$. Thus, if $g_{\epsilon,\alpha,\gamma}(x,u)=0$ and $x=w(u)$,  from Proposition \ref{prop:critical_points_equilibria}, we have that $(x,u)$ is a critical point of \eqref{eq:steady-state_opti_problem}; since $(x_*,u_*)$ is the sole critical point of \eqref{eq:steady-state_opti_problem}, $\frac{d}{dt}V(\xi(t),\upsilon(t)) < 0$ for all $(\xi(t),\upsilon(t))\in\Gamma\setminus\{(x_*,u_*)\}$, and $V(x_*,u_*) = 0$. This completes the proof.
\end{proof}
\begin{proof}[\textbf{Proof of Lemma \ref{lem:conditions-sigma-strong-mfcq}}]
    For each $(x,u)\in\Cc_r$ such that $b(u) = 0$, note that by the assumptions of Proposition~\ref{prop:feasibility}, we have 
    $\nabla b(u)^\top q_{(x,u)} > 0$, and there exists a neighborhood $\Nc_{(x,u)}$ of $(x,u)$ and a constant $\eta_{(x,u)} > 0$ such that
    \begin{align*}
        \eta_{(x,u)} < \frac{\nabla b(\bar{u})^\top q_{(x,u)}}{ \norm{\nabla b(\bar{u})} \norm{q_{(x,u)}} },
    \end{align*}
    for all $(\bar{x},\bar{u})\in\Nc_{(x,u)}$. Note that the denominator of the above expression is non-zero because otherwise $\nabla b(u)^\top q_{(x,u)} > 0$ would not hold.
    Now, define the set $B_1 = \setdef{(x,u)\in\Cc_r}{b(u) = 0}$, and note that $\bigcup_{(x,u)\in B_1} \Nc_{(x,u)}$ is a covering of $B_1$. Since $\Cc_r$ is compact, $B_1$ is also compact and it admits a finite cover $\bigcup_{i=1}^N \Nc_{(x_i,u_i)}$.
    Now, by letting $\sigma_1 := \min\limits_{i\in[N]} \eta_{(x_i,u_i)}$, we have that for each $(x,u)\in B_1$, there exists $i\in[N]$ such that 
    $\nabla b(u)^\top q_{(x_i,u_i)} \geq \sigma_1 \norm{q_{(x_i,u_i)}} \norm{\nabla b(u)}$, and the first item holds.    
    The second item follows by an analogous argument.
\end{proof}

\begin{proof}[\textbf{Proof of Theorem \ref{thm:global_convergence}}]
    In what follows, we assume without loss of generality that $\epsilon\leq \epsilon_m$, for some arbitrary $\epsilon_m>0$. Further, notice that $(x_*,u_*) \in \bigcap_{i=0}^r\Sc_i$, by Proposition~\ref{prop:feasible-points-lie-in-bigcap}; that solutions of \eqref{eq:closed_loop_sgf} exist and are unique, by Proposition \ref{prop:solutions_sgf}; and that CRCQ holds for \eqref{eq:sgf_QP} in $\bigcap_{i=0}^r\Sc_i$, by Proposition~\ref{prop:crcq}. 

    \emph{$\bullet$ \underline{Defining the function $V$ and its time derivative}:}  
    Consider the function $V:\real^m\to\real$ defined by $V(u) = \frac{1}{2}(u-u_*)^\top (u-u_*)$.
    We have 
        \begin{align}\label{eq:dVdt_1}
        & \frac{d}{dt}V(x,u) = (u-u_*)^\top g_{\epsilon,\alpha,\gamma}(x,u) = \notag
        \\
        & =\epsilon(u-u_*)^\top \Big(-\frac{\partial w}{\partial u}(u)^\top\nabla \Phi(w(u))\Big)  \notag
        \\
        & \quad + \epsilon(u-u_*)^\top \! \Big(-\frac{\partial w}{\partial u}(u)^\top\nabla\Phi(x) + \frac{\partial w}{\partial u}(u)^\top\nabla\Phi(w(u))\Big) \notag
        \\
        & \quad +
        (u-u_*)^\top \! \Big(\lambda_b'(x,u)\nabla b(u) \!+\! \lambda_h'(x,u)\frac{\partial h_r}{\partial u}(x,u)\Big),
    \end{align}
    where the derivative w.r.t. time is taken over the closed-loop dynamics \eqref{eq:closed_loop_sgf}, and in the second equality we have used the 
    KKT conditions~\eqref{eq:giannis_kkt1}-\eqref{eq:giannis_kkt2} for the QP \eqref{eq:sgf_QP}.
    
    Now, note that since $b(u_*) > 0$ and $h_r(x_*,u_*) > 0$, the stationarity condition for~\eqref{eq:steady-state_opti_problem} reads $\frac{\partial w}{\partial u}(u_*)^\top\nabla\Phi(w(u_*)) = \mathbf{0}_m$. Due to strong convexity of $\Phi(w(\cdot))$, we have monotonicity of its gradient, i.e., we have $-(u-u_*)^\top (\frac{\partial w}{\partial u}(u)^\top\nabla\Phi(w(u)) - \frac{\partial w}{\partial u}(u_*)^\top\nabla\Phi(w(u_*))) \leq -c \|u-u_*\|^2$. This provides a bound on the first term of the RHS of \eqref{eq:dVdt_1}. Further, notice that, due to Assumption \ref{assum:differentiability, regularity and relative degree} and the fact that $\bigcap_{i=0}^r\Sc_i$ is compact, there exists $L_1>0$ such that 
    \begin{equation*}
        \Big\| \frac{\partial w}{\partial u}(u)^\top \nabla \Phi(x)- \frac{\partial w}{\partial u}(u)^\top \nabla \Phi(x') \Big\| \leq L_1\|x-x'\|
    \end{equation*} for any $(x,u),(x',u)\in \bigcap_{i=0}^r\Sc_i$. This provides a bound for the second term of the RHS of \eqref{eq:dVdt_1}. Overall, we obtain:
    \begin{align}\label{eq:dVdt_2}
        & \frac{d}{dt}V(x,u) \leq -\epsilon c \norm{u - u_*}^2 \!+\! \epsilon L_1\norm{u-u_*} \norm{x-w(u)}         \notag
        \\
        & \quad + \! (u-u_*)^\top \Big(\lambda_b'(x,u)\nabla b(u) \!+\! \lambda_h'(x,u)\frac{\partial h_r}{\partial u}(x,u) \! \Big).
    \end{align}

    \emph{$\bullet$ \underline{Upper bounding remaining term in time derivative of $V$}:} 
    Now, let us derive an upper bound for the term 
    $(u-u_*)^\top \big(\lambda_b'(x,u)\nabla b(u) \!+\! \lambda_h'(x,u)\frac{\partial h_r}{\partial u}(x,u) \! \big)$. Choose $\delta>0$ such that $b(u_*) > \delta$ and $h_r(x,u_*)>\delta$ for all $x$ such that $(x,u^*)\in\bigcap_{i=0}^r\Sc_i$. Note that such $\delta$ exists by assumption. 
    Define the function 
    \begin{equation}\label{eq:definition-gb}
    g_{b_{\epsilon,\alpha}}(x,u)=\left\{\begin{aligned}\argmin_{q}\text{ }& \frac{1}{2}\|q +\epsilon\frac{\partial w}{\partial u}(u)^\top \nabla\Phi(x)\|^2 \\
            \mathrm{s.t.:} \text{ } &\nabla^\top b(u)\cdot q +\alpha b(u)\geq 0.
    \end{aligned}\right.
    \end{equation}
    For all $(x,u)\in\bigcap_{i=0}^r\Sc_i$, $q=0$ is feasible for~\eqref{eq:definition-gb}.
    Therefore,
    \begin{align*}
         \| g_{b_{\epsilon,\alpha}}(x,u) \| - \epsilon \| \frac{\partial w}{\partial u}(u)^\top \nabla\Phi(x) \|  & \leq
        \\
        \| g_{b_{\epsilon,\alpha}}(x,u)+\epsilon\frac{\partial w}{\partial u}(u)^\top \nabla\Phi(x)\| & \leq \| \epsilon\frac{\partial w}{\partial u}(u)^\top \nabla\Phi(x)\|,
    \end{align*}
    where in the first inequality we used the triangle inequality and in the second inequality the fact that $g_{b_{\epsilon,\alpha}}$ is the minimizer of~\eqref{eq:definition-gb}.
    We hence get
    \begin{align}\label{eq:gb-bound}
        \| g_{b_{\epsilon,\alpha}}(x,u) \| \leq 2\epsilon \Big\| \frac{\partial w}{\partial u}(u)^\top \nabla\Phi(x) \Big\|.
    \end{align}
    Since $\bigcap_{i=0}^r\Sc_i$ is compact and $\epsilon\leq \epsilon_m$,~\eqref{eq:gb-bound} implies that $g_{b_{\epsilon,\alpha}}$ is uniformly bounded, i.e., there exists $M_b>0$ (independent of $\alpha$) such that $\norm{g_{b_{\epsilon,\alpha}}(x,u)} \leq M_b$ for all $(x,u)\in\bigcap_{i=0}^r\Sc_i$. 
    Hence, there exists $\bar{M}_b>0$ such that, for all $(x,u)\in\bigcap_{i=0}^r\Sc_i$, 
    \begin{align*}
        \Big\| \frac{\partial h_r}{\partial u}(x,u)^\top g_{b_{\epsilon,\alpha}}(x,u) + \frac{\partial h_r}{\partial x}(x,u)^\top f(x,u) \Big\| \leq \bar{M}_b.
    \end{align*}
    Therefore, by taking $\gamma > \gamma_1 := \frac{\bar{M}_b}{\delta}$, we have 
    \begin{align}\label{eq:condition-h}
        \frac{\partial h_r}{\partial u}(x,u)^\top g_{b_{\epsilon,\alpha}}(x,u) + \frac{\partial h_r}{\partial x}(x,u)^\top f(x,u) + \gamma h_r(x,u) > 0,
    \end{align}
    for all $(x,u)\in\bigcap_{i=0}^r\Sc_i$ with $h_r(x,u)\geq\delta$. Similarly, define
    \begin{align}\label{eq:definition-gh}
    g_{h_{\epsilon,\gamma}}(x,u)=\left\{\begin{aligned}\argmin_{q}\text{ }& \frac{1}{2}\|q +\epsilon\frac{\partial w}{\partial u}(u)^\top \nabla\Phi(x)\|^2 \\
            \mathrm{s.t.:} \text{ } &\begin{aligned}&\\\frac{\partial h_r}{\partial x}(x,u)\cdot f(x,u)+ \frac{\partial h_r}{\partial u}(x,u)\cdot q&
            \\+\gamma h_{r}(x,u)\geq 0&\end{aligned}
            \end{aligned}\right.
    \end{align}
    Note that, since $\frac{\partial h_r}{\partial u}(x,u)\neq 0$ for all $(x,u)$ such that $h_r(x,u)$ by Assumption~\ref{assum_item:relative degree}, and since $\bigcap_{i=0}^r\Sc_i$ is compact, using arguments analogous to the proof of Proposition~\ref{prop:feasibility}, there exists a neighborhood $\Nc\subseteq\bigcap_{i=0}^r\Sc_i$ of $\{(x,u):\ h_r(x,u)=0\}\cap\bigcap_{i=0}^r\Sc_i$, where $\frac{\partial h_r}{\partial u}(x,u)\neq 0$. Then, for any $(x,u)\in \Nc$, we have
    \begin{align*}
        q = -\frac{ \frac{\partial h_r}{\partial x}(x,u)^\top f(x,u) }{ \norm{\frac{\partial h_r}{\partial u}(x,u)}^2 } \frac{\partial h_r}{\partial u}(x,u),
    \end{align*}
    is feasible for~\eqref{eq:definition-gh}.
    Hence, for any $(x,u)\in \Nc$,
    \begin{align*}
        & \Big\| g_{h_{\epsilon,\gamma}}(x,u) \Big\| - \epsilon \Big\| \frac{\partial w}{\partial u}(u)^\top \nabla\Phi(x) \Big\| \leq
        \\ 
        & \Big\| g_{h_{\epsilon,\gamma}}(x,u) + \epsilon \frac{\partial w}{\partial u}(u)^\top \nabla\Phi(x) \Big\| 
        \leq
        \\ 
        & \Big\|-\frac{ \frac{\partial h_r}{\partial x}(x,u)^\top f(x,u) }{ \norm{\frac{\partial h_r}{\partial u}(x,u)}^2 } \frac{\partial h_r}{\partial u}(x,u) + \epsilon \frac{\partial w}{\partial u}(u)^\top \nabla\Phi(x) \Big\|,
    \end{align*}
    which implies that 
        \begin{align}\label{eq:gh-bound}
        & \norm{g_{h_{\epsilon,\gamma}}(x,u)} \leq \epsilon \Big\| \frac{\partial w}{\partial u}(u)^\top \nabla\Phi(x) \Big\|
        \\ 
        & \; + \Big\| -\frac{ \frac{\partial h_r}{\partial x}(x,u)^\top f(x,u) }{ \norm{\frac{\partial h_r}{\partial u}(x,u)}^2 } \frac{\partial h_r}{\partial u}(x,u) + \epsilon \frac{\partial w}{\partial u}(u)^\top \nabla\Phi(x) \Big\|, \notag
    \end{align}
    for any $(x,u)\in \Nc$.
    Further, again using arguments analogous to the proof of Proposition~\ref{prop:feasibility}, there exists $\gamma_2$, such that for $\gamma>\gamma_2$ we have that, for all $(x,u)\in (\bigcap_{i=0}^r\Sc_i)\setminus \Nc$, $q=0$ is feasible for \eqref{eq:definition-gh}. Thus, similarly to the above, for any $(x,u)\in (\bigcap_{i=0}^r\Sc_i)\setminus \Nc$, we have $\|g_{h_{\epsilon,\gamma}}(x,u)\|\leq 2\epsilon \norm{\frac{\partial w}{\partial u}(u)^\top \nabla\Phi(x)}$, when $\gamma>\gamma_2$.
    
    Since $\bigcap_{i=0}^r\Sc_i$ is compact and $\epsilon\leq\epsilon_m$, the above implies that, when $\gamma>\gamma_2$ $g_{h_{\epsilon,\gamma}}$ is uniformly bounded in $\bigcap_{i=0}^r\Sc_i$, i.e., there exists $M_h > 0$ (independent of $\gamma$) such that $\norm{g_{h_{\epsilon,\gamma}}(x,u)} \leq M_h$ for all $(x,u)\in\bigcap_{i=0}^r\Sc_i$. In turn, this implies that there exists $\bar{M}_h>0$ such that, when $\gamma>\gamma_2$, $\norm{\nabla b(u)^\top g_{h_{\epsilon,\gamma}}(x,u)} \leq \bar{M}_h$.
    Therefore, by taking $\alpha > \alpha_1 := \frac{\bar{M}_h}{\delta}$ and $\gamma>\gamma_2$, we have 
    \begin{equation}
        \begin{aligned}\label{eq:condition-b}
        &\nabla b(u)^\top g_{h_{\epsilon,\gamma}}(x,u) + \alpha b(u) > 0,
        \end{aligned}
    \end{equation}
    for all $(x,u) \in \bigcap_{i=0}^r\Sc_i \ \text{s.t.} \ b(u) \geq \delta$.
    Finally, let $\tilde{M}_b > 0$, $\tilde{M}_h > 0$ such that 
    \begin{align*}
        &\big\|\nabla b(u)^\top \epsilon_m \frac{\partial w}{\partial u}(u)^\top \nabla\Phi(x) \big\| \leq \tilde{M}_b,
        \\
        &\big\|\frac{\partial h_r}{\partial u}(x,u)^\top \epsilon_m \frac{\partial w}{\partial u}(u)^\top \! \nabla\Phi(x)\big\| \!+\!
\big\|\frac{\partial h_r}{\partial x}(x,u)^\top \! f(x,u) \big\| \!\leq \! \tilde{M}_h,
    \end{align*}
    for all $(x,u)\in\bigcap_{i=0}^r \Sc_i$. By taking $\alpha > \alpha_2 = \frac{\tilde{M}_b}{\delta}$ and $\gamma >\gamma_3= \frac{\tilde{M}_h}{\delta}$, we have 
    $g_{\epsilon,\alpha,\gamma}(x,u) = -\epsilon \frac{\partial w}{\partial u}(u)^\top \nabla\Phi(x)$,
    for all $(x,u)$ such that $b(u) \geq \delta$ and $h_r(x,u) \geq \delta$,
    and the constraints of~\eqref{eq:sgf_QP} are not active at the optimizer.
    
    Now, let $\alpha > \alpha^*_1 := \max\{ \alpha_1, \alpha_2 \}$, $\gamma > \gamma^*_1 := \max\{ \gamma_1, \gamma_2, \gamma_3 \}$. Using~\eqref{eq:dVdt_2} and the observations above, we next show that the following holds, for all $(x,u)\in\bigcap_{i=0}^r \Sc_i$:
{      \small    \begin{align}\label{eq:global_proof_bound_dvdt}
  \frac{d}{dt}V(x,u) \leq -\epsilon c\norm{u-u_*}^2
        + {\epsilon L_1} \norm{u-u_*} \norm{x-w(u)}.
    \end{align}
    }
    For $(x,u)\in\bigcap_{i=0}^r \Sc_i$, consider the following four cases:
    \newline
    \mbox{} \quad 1)
 if $b(u)\geq\delta$ and $h_r(x,u)\geq\delta$, then since the constraints of~\eqref{eq:sgf_QP} are not active at the optimizer, by the complementary slackness condition, we have $\lambda_b^{\prime}(x,u) = 0$ and $\lambda_h^{\prime}(x,u) = 0$, which implies \eqref{eq:global_proof_bound_dvdt}.
        \newline
        \mbox{} \quad 2) if $b(u)\in[0,\delta)$ and $h_r(x,u)\geq\delta$, then since~\eqref{eq:condition-h} holds, $g_{\epsilon,\alpha,\gamma}(x,u) = g_{b_{\epsilon,\alpha}}(x,u)$ and the constraint in~\eqref{eq:sgf_QP} associated with $h_r$ is not active.\footnote{That is because, as $g_{b_{\epsilon,\alpha}}(x,u)$ is the minimizer of \eqref{eq:definition-gb}, and because it is feasible for \eqref{eq:sgf_QP}, then it has to be a minimizer of \eqref{eq:sgf_QP} (as \eqref{eq:sgf_QP} is more constrained than \eqref{eq:definition-gb}).} Hence, $\lambda_h^{\prime}(x,u) = 0$. On the other hand, 
        since $-b$ is convex, we have 
        $-b(u_*) \geq -b(u) - \nabla b(u)^\top (u^* - u)$. Since $b(u_*) > \delta \geq b(u)$, we have 
        $0 \geq b(u) - b(u_*) \geq \nabla b(u)^\top (u-u^*)$. Furthermore, given that $\lambda_b^{\prime}(x,u) \geq 0$, this implies that $(u-u_*)^\top \lambda_b^{\prime}(x,u) \nabla b(u) \leq 0$.
        Altogether, we have \eqref{eq:global_proof_bound_dvdt}.
        \newline
        \mbox{} \quad 3) if $b(u)\geq\delta$ and $h_r(x,u)\in[0,\delta)$, then since~\eqref{eq:condition-b} holds, $g_{\epsilon,\alpha,\gamma}(x,u) = g_{h_{\epsilon,\gamma}}(x,u)$ and the constraint in~\eqref{eq:sgf_QP} associated with $b$ is not active.\footnote{The reasoning is the same as in the previous case.} Hence, $\lambda_b^{\prime}(x,u) = 0$. On the other hand, since $-h_r(x,\cdot)$ is convex for any $x\in\real^n$ such that $\exists u\in\real^m$ with $(x,u)\in\bigcap_{i=0}^r \Sc_i$, $-h_r(x,u_*) \geq -h_r(x,u) - \frac{\partial h_r}{\partial u}(x,u)^\top (u_*-u)$. Since $h_r(x,u_*) > \delta \geq h_r(x,u)$ for any $x$ such that there exists $u$ with $(x,u)\in\bigcap_{i=0}^r \Sc_i$, we have 
        $0 \geq h_r(x,u)-h_r(x,u_*) \geq \frac{\partial h_r}{\partial u}(x,u)^\top (u-u_*)$. Since $\lambda_h^{\prime}(x,u) \geq 0$, we have $(u-u_*)^\top \lambda_h^{\prime}(x,u) \frac{\partial h_r}{\partial u}(x,u) \leq 0$, and~\eqref{eq:global_proof_bound_dvdt} follows.
        \newline
        \mbox{} \quad
        4) 
        if $b(u)\in[0,\delta)$ and $h_r(x,u)\in[0,\delta)$, by the same arguments utilized in the previous two items we have $(u-u_*)^\top \lambda_b^{\prime}(x,u) \nabla b(u) \leq 0$ and $(u-u_*)^\top \lambda_h^{\prime}(x,u) \frac{\partial h_r}{\partial u}(x,u) \leq 0$, which implies~\eqref{eq:global_proof_bound_dvdt}.
 
    As a result, we conclude that \eqref{eq:global_proof_bound_dvdt} holds on $\bigcap_{i=0}^r \Sc_i$.

    \emph{$\bullet$ \underline{Defining the Lyapunov function $\tilde{V}$ and its time derivative}:}    Next, consider the function $W:\real^n\times\real^m\to\real$, from Lemma \ref{lem:W}. Our strategy is to use the combined Lyapunov function $\tilde{V} = V+W$ to prove global stability
    of the closed-loop. For the function $W$ we have
    \begin{align*}
        \frac{d}{dt}W(x,u) \! \leq  \! d_4 \norm{ g_{\epsilon,\alpha,\gamma}(x,u)} \norm{x-w(u)} \!- \! d_3 \norm{x-w(u)}^2.
    \end{align*}
    Combining this with \eqref{eq:global_proof_bound_dvdt}, we obtain:
    \begin{align}\label{eq:dvtilde_dt}
        & \frac{d}{dt}\tilde{V}(x,u) \leq
         -\epsilon c\norm{u-u_*}^2 + \epsilon L_1 \norm{u-u_*} \norm{x-w(u)} \notag
        \\
        & - 
        d_3 \norm{x-w(u)}^2 + d_4 \norm{ g_{\epsilon,\alpha,\gamma}(x,u)} \norm{x-w(u)} .
    \end{align}
    Below, we show that there exist $\epsilon^* > 0$, $\alpha^*>0$ and $\gamma^* >0$ such that if $\epsilon < \epsilon^*$, $\alpha > \alpha^*$, $\gamma > \gamma^*$, then 
    $\frac{d \tilde{V}}{dt}(x,u) < 0$ for all $(x,u)\in (\bigcap_{i=0}^r \Sc_i) \setminus (x_*,u_*)$.

    Let $\bar{c} > 0$ be sufficiently small such that 
    \begin{align*}
        B_{\bar{c}} := \setdef{ (x,u) \! \in \! \bigcap_{i=0}^r \Sc_i }{ \norm{ (x \! - \! w(u),u \! - \! u_*) } \! \leq \! \bar{c} } \! \subseteq \! \text{Int}(\bigcap_{i=0}^r \Sc_i).
    \end{align*}
    Note that such $\bar{c}$ exists because $(x_*,u_*)\in\text{Int}(\bigcap_{i=0}^r \Sc_i)$.\footnote{As $h_r(x_*,u_*) >0$ and $x_*=w(u_*)$, we have that $h_r(x_*,u_*) = \beta h_{r-1}(x_*,u_*) = \dots = \beta^r h(x_*) >0$. The rest follows from continuity of the functions $h_i$ and $w$.}
    Since $B_{\bar{c}} \subseteq \bigcap_{i=0}^r \Sc_i$, there exist $\alpha^*_2>\alpha_1^*,$ $\gamma^*_{2} > \gamma^*_{1}$, such that with $\alpha > \alpha^*_2, \gamma > \gamma^*_{2}$, for each $(x,u)\in B_{\bar{c}}$, it holds that
    \begin{align*}
        &\frac{\partial h_r}{\partial x}(x,u)f(x,u) \!-\! \epsilon \frac{\partial h_r}{\partial u}(x,u) \frac{\partial w}{\partial u}(u)^\top \nabla \Phi(x) \! \geq \! -\gamma h_r(x,u), 
        \\
        &-\epsilon \nabla b(u)^\top \frac{\partial w}{\partial u}(u)^\top \nabla \Phi(x) \geq -\alpha b(u).
    \end{align*}
    Hence,    $g_{\epsilon,\alpha,\gamma}(x,u) = -\epsilon \frac{\partial w}{\partial u}(u)^\top \nabla \Phi(x)$ for all $(x,u) \in B_{\bar{c}}$, for $\alpha > \alpha^*_2, \gamma > \gamma^*_{2}$.
    Since $\frac{\partial w}{\partial u}(u_*)^\top \nabla\Phi(x_*) = 0$, then for all $(x,u) \in B_{\bar{c}}$,
    \begin{align*}
    &        \norm{g_{\epsilon,\alpha,\gamma}(x,u)}  
     \leq
        \epsilon \|\frac{\partial w}{\partial u}(u)^\top \nabla\Phi(x)-\frac{\partial w}{\partial u}(u_*)^\top \nabla\Phi(x_*)\|\\
& \leq        \epsilon \|\frac{\partial w}{\partial u}(u)^\top \nabla\Phi(x)-\frac{\partial w}{\partial u}(u)^\top \nabla\Phi(w(u))\|
\\
& \quad +      \epsilon \|\frac{\partial w}{\partial u}(u)^\top \nabla\Phi(w(u))-\frac{\partial w}{\partial u}(u_*)^\top \nabla\Phi(w(u))\| 
\\
& \quad +
        \epsilon \|\frac{\partial w}{\partial u}(u_*)^\top \nabla\Phi(w(u))-\frac{\partial w}{\partial u}(u_*)^\top \nabla\Phi(w(u_*))\|
        \\
&        \leq
        \epsilon L_1 
        \norm{x-w(u)} + {\epsilon (L_1 l_w + \tilde{l}_w M_{\Phi} )} \norm{u-u_*},
    \end{align*}
    where $l_w$ denotes the Lipschitz constant of $w$ in $\bigcap_{i=0}^r \Sc_i$,
    $\tilde{l}_w$ is the Lipschitz constant of $\frac{\partial w}{\partial u}(u)$ in $\bigcap_{i=0}^r \Sc_i$, and $M_{\Phi}$ is a bound on $\norm{\nabla\Phi(w(u))}$ in $\bigcap_{i=0}^r \Sc_i$. Note that such constants exist because $\bigcap_{i=0}^r \Sc_i$ is compact by assumption.

    \emph{$\bullet$ \underline{Showing the time derivative of 
    $\tilde{V}$ is negative definite} \underline{on $B_{\bar{c}}\setminus(x_*,u_*)$}:}
    Hence, 
    $\frac{d\tilde{V}}{dt}(x,u)\leq -\zeta^\top M_\epsilon\zeta$, for all $(x,u)\in B_{\bar{c}}$, where
    $  \zeta = [\norm{x-w(u)}, \norm{u-u_*}]$ and
    \begin{equation*}
        \begin{aligned}
            M_\epsilon&=
            \left(
            \begin{smallmatrix}
                d_3 - d_4 L_1 \epsilon & -{\frac{\epsilon L_1}{2}} - \frac{d_4 L_1 (l_w+1) \epsilon }{2} \\
                -{\frac{\epsilon L_1}{2}} - \frac{d_4 L_1 (l_w+1) \epsilon }{2} &\epsilon c
            \end{smallmatrix}
            \right).
        \end{aligned}
    \end{equation*}
    By taking $\alpha>\alpha_2^*$, $\gamma>\gamma_2^*$, $\epsilon < \epsilon^*_1 := \frac{d_3}{2 d_4 L_1}$, and 
    \begin{align*}
        \epsilon < \epsilon^*_2 := \frac{ \frac{d_3}{2} c }{ \Big( \frac{ L_1}{2} + \frac{d_4 L_1 (l_w+1) }{2} \Big)^2 },
    \end{align*}
    we get
    $\frac{d \tilde{V}}{dt}(x,u) < 0$ for all $(x,u) \in B_{\bar{c}} \backslash (x_*,u_*)$.  

    \emph{$\bullet$ \underline{Showing the time derivative of $\tilde{V}$ is negative definite} \underline{on $(\bigcap_{i=0}^r \Sc_i) \setminus B_{\bar{c}}$}:}
    Now let us show that there exists $\epsilon^*_3 > 0$, $\alpha^*_3>\alpha^*_2$ and $\gamma^*_3 > \gamma^*_2$ such that if $\epsilon < \epsilon_3^*$, $\alpha > \alpha_3^*$, $\gamma > \gamma_3^*$, then $\frac{d \tilde{V}}{dt}(x,u) < 0$ for all $(x,u)\in (\bigcap_{i=0}^r \Sc_i) \setminus B_{\bar{c}}$. To do so, we bound $\|g_{\epsilon,\alpha,\gamma}(x,u)\|$ by employing the results in Section \ref{subsec:auxiliary}. We start by bounding $\|g_\epsilon(x,u)\|$ (defined in~\eqref{eq:g_epsilon}):
    \begin{align}\label{eq:bound_on_g_epsilon}
        & \|g_\epsilon(x,u)\|\leq
        (1+\frac{1}{\sigma})\|f(x,u)\|+\epsilon(1+\frac{1}{\sigma})\|\frac{\partial w}{\partial u}(u)^\top \nabla\Phi(x)\| \notag
        \\
        & = (1+\frac{1}{\sigma})\|f(x,u)-f(w(u),u)\| \notag
        \\
        \notag
        & \quad + \epsilon(1+\frac{1}{\sigma})\|\frac{\partial w}{\partial u}(u)^\top \nabla\Phi(x)-\frac{\partial w}{\partial u}(u_*)^\top \nabla\Phi(x_*)\| \notag
        \\
        \notag
        & \leq (1+\frac{1}{\sigma})l_f\norm{x-w(u)} 
        + (1+\frac{1}{\sigma}) \epsilon L_1 \norm{x-w(u)} \\
        & + {\epsilon (1+\frac{1}{\sigma}) (L_1 l_w + \tilde{l}_w M_{\Phi} )} \norm{u-u_*} ,
    \end{align}
    where in the first inequality we used Lemma \ref{lem:ric_bound}. Since $g_{\epsilon,\alpha,\gamma}$ is continuous with respect to $\alpha$ and $\gamma$ (cf. Lemma~\ref{lem:continuity-alpha-gamma}, because $\bigcap_{i=0}^r\Sc_i$ is compact and the assumptions of Proposition~\ref{prop:feasibility} hold), by Lemma~\ref{lem:ric-bound-sgf} we have that, for any $\delta >0$, there exist $\alpha_\delta$ and $\gamma_\delta$, such that for $\alpha >\alpha_\delta$ and $\gamma>\gamma_\delta$,
     \begin{align}\label{eq:bound_g_epsilon_alpha_gamma}
        & \norm{g_{\epsilon,\alpha,\gamma}(x,u)} \leq \norm{g_{\epsilon}(x,u)} + \delta 
        \leq (1\!+\!\frac{1}{\sigma}) \cdot
        \\
        & \quad
        \cdot \Big( 
        (\epsilon L_1 \! + \! l_f)\norm{x \! - \! w(u)} \! + \! {\epsilon (L_1 l_w + \tilde{l}_w M_{\Phi})} \norm{u-u_*}
        \Big) \! + \! \delta, \notag
    \end{align}   
    By inserting \eqref{eq:bound_g_epsilon_alpha_gamma} into \eqref{eq:dvtilde_dt}, we obtain
    \begin{equation}\label{eq:dtildevdt_final_bound}
    \begin{aligned}
        \frac{d\tilde{V}}{dt}(x,u)&\leq -\zeta^\top M'_\epsilon\zeta + \|x-w(u)\|\delta \\&\leq -\lambda_{\text{min}}(M'_{\epsilon})\norm{\zeta}^2 + d_4 \norm{\zeta} \delta,
    \end{aligned}    
    \end{equation}
     where
    \begin{align*}
        M'_{\epsilon} &= \begin{pmatrix}
            M'_{\epsilon_{1,1}} & M'_{\epsilon_{1,2}} \\
            M'_{\epsilon_{2,1}} &  M'_{\epsilon_{2,2}}
        \end{pmatrix},\\
        M'_{\epsilon_{1,1}} &= d_3 - d_4(1+\frac{1}{\sigma})(l_f+\epsilon L_1), \ M'_{\epsilon_{2,2}} = \epsilon c, \\
        M'_{\epsilon_{1,2}} &=M'_{\epsilon_{2,1}}= -\frac{\epsilon L_1}{ 2 } - {
        \frac{\epsilon d_4 (1+\frac{1}{\sigma}) (L_1 l_w + \tilde{l}_w M_{\Phi}) }{2}
        }.
    \end{align*}
    Note that by taking 
    $\epsilon < \epsilon_3^* := \frac{d_3 - d_4(1+\frac{1}{\sigma})l_f}{2}$
    and
    \begin{align*}
        \epsilon < \epsilon_4^* := \frac{ c (d_3 - d_4(1+\frac{1}{\sigma}) l_f) }{ 2\Big( \frac{L_1}{2} + {
        \frac{ d_4 (1+\frac{1}{\sigma}) (L_1 l_w + \tilde{l}_w M_{\Phi}) }{2}
        } \Big)^2 },
    \end{align*}
    $M'_{\epsilon}$ is positive definite.
    It is important to note that $\epsilon_3^* > 0$
    and $\epsilon_4^* > 0$
    thanks to item~\ref{it:d_3-d_4} in the theorem hypotheses.
    Now, observe that $\delta$ can be chosen freely, determining the selection on $\alpha_\delta,\gamma_\delta$, by Lemma \ref{lem:ric-bound-sgf}. Select $\delta = \frac{\bar{c} \lambda_{\text{min}}(M'_{\epsilon}) }{2 d_4}$ and $\alpha_3^*>\max\{\alpha_2^*,\alpha_\delta\}$, $\gamma_3^*>\max\{\gamma_2^*,\gamma_\delta\}$. Equation \eqref{eq:dtildevdt_final_bound} becomes:
    \begin{equation*}
        \frac{d}{dt}\tilde{V}(x,u)\leq \lambda_{\text{min}}(M'_{\epsilon})\norm{\zeta}(-\|\zeta\| + \frac{\bar{c}}{2}) .
    \end{equation*}
    Note that, by definition of $B_{\bar{c}}$, we have $\|\zeta\|>\frac{\bar{c}}{2}$, for $(x,u)\in \bigcap_{i=0}^r \Sc_i \backslash B_{\bar{c}}$. Thus, for all $(x,u)\in \bigcap_{i=0}^r\Sc_i \backslash B_{\bar{c}}$, we have $\frac{d}{dt}\tilde{V}(x,u) < 0$.     Finally, the result follows by taking $0 < \epsilon < \min\{ \epsilon_1^*, \epsilon_2^*, \epsilon_3^*, \epsilon_4^* \}$ and $\alpha > \alpha_3^*$, $\gamma > \gamma_3^*$.
\end{proof}

\subsection{Auxiliary results for the proof of Theorem
\ref{thm:global_convergence}}\label{subsec:auxiliary}

\begin{lemma}\label{lem:alpha-proximity}
    Let $\Kc\subseteq\real^n$ be a compact set.
    Let $F:\Kc\to\real^n$ and $\tilde{F}:\Kc\times\real_{\geq0}\to\real^n$ be continuous maps with the property that for each $x\in\Kc$,
$       \lim\limits_{\alpha\to\infty}\tilde{F}(x,\alpha) = F(x)$.
    Then, for any $\delta > 0$, there exists $\alpha_{\delta}^* > 0$
    such that for $\alpha > \alpha_{\delta}^*$,
    \begin{align*}
        \sup\limits_{x\in\Kc} \| \tilde{F}(x,\alpha) - F(x) \| \leq \delta.
    \end{align*}
\end{lemma}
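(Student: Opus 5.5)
The natural approach is a contradiction argument using compactness of $\Kc$. Suppose the claim fails for some $\delta>0$. Then there are sequences $\alpha_k\to\infty$ and $x_k\in\Kc$ with $\|\tilde{F}(x_k,\alpha_k)-F(x_k)\|>\delta$ for all $k$. By compactness of $\Kc$ we pass to a subsequence with $x_k\to\bar{x}\in\Kc$. Continuity of $F$ gives $F(x_k)\to F(\bar{x})$. So it suffices to show $\tilde{F}(x_k,\alpha_k)\to F(\bar{x})$, which contradicts the strict inequality in the limit.

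The last step is the crux, and it is also where the hypotheses as literally stated are too weak. Pointwise convergence in $\alpha$ together with continuity of each $\tilde{F}(\cdot,\alpha)$ does not in general yield $\tilde{F}(x_k,\alpha_k)\to F(\bar{x})$. A travelling bump $\tilde{F}(x,\alpha)=\phi(\alpha(x-1/\alpha))$ on $[0,1]$, with $\phi$ a continuous bump supported near $1$, converges pointwise to $0$ but not uniformly. Since Dini-type monotonicity is not available, I would read the continuity hypothesis as joint continuity "up to $\alpha=\infty$".

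Concretely, I would reparametrize by $s=1/\alpha\in[0,s_0]$ and set $G(x,s):=\tilde{F}(x,1/s)$ for $s>0$ and $G(x,0):=F(x)$. The hypothesis I would use, and then verify in the application, is that $G$ is continuous on the compact set $\Kc\times[0,s_0]$. Under this hypothesis $G$ is uniformly continuous there. Hence there is $\eta>0$ such that $|s|<\eta$ implies $\|G(x,s)-G(x,0)\|\le\delta$ for every $x\in\Kc$. This is exactly the claim with $\alpha_\delta^*:=1/\eta$, and it makes the sequential argument above immediate.

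The main obstacle is therefore justifying this joint continuity at $s=0$ in the setting where the lemma is used, namely for $g_{\epsilon,\alpha,\gamma}$ as $\alpha,\gamma\to\infty$ through Lemmas~\ref{lem:continuity-alpha-gamma} and~\ref{lem:ric-bound-sgf}. There I would exploit the QP structure. As $\alpha,\gamma\to\infty$, the constraints of \eqref{eq:sgf_QP} converge to the limiting constraints defining $g_\epsilon$, and the $\sigma$-strong MFCQ of Lemma~\ref{lem:conditions-sigma-strong-mfcq} holds uniformly on the compact set. Standard parametric-QP stability results (e.g.\ \cite{JL:95}, under MFCQ) then give continuity of the minimizer jointly in $(x,1/\alpha,1/\gamma)$, including at the boundary value $0$. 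If that extension is not available, I would instead prove the lemma directly for the specific family by exhibiting an explicit modulus: bound the gap $\|g_{\epsilon,\alpha,\gamma}-g_\epsilon\|$ uniformly on $\Kc$ in terms of $1/\alpha$ and $1/\gamma$ via the Lipschitz dependence of QP solutions on right-hand-side perturbations of the constraints.
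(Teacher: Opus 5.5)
Your diagnosis is right: the lemma is false as stated, and your travelling bump is a valid counterexample. It is jointly continuous on $\Kc\times\real_{\geq0}$ and has pointwise limit $F\equiv 0$, yet $\sup_x|\tilde F(x,\alpha)|=\max\phi$ for all large $\alpha$.

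The paper's proof makes exactly the mistake your example rules out. Its neighborhood $\Nc_x$ comes from continuity of $\tilde F(\cdot,\alpha)$ at one fixed $\alpha>\alpha_{x,\delta}$. The finite subcover therefore certifies $\|\tilde F(y,\alpha)-F(y)\|<\delta$ only for that single $\alpha$. To conclude for all $\alpha>\max_i\alpha_{x_i,\delta}$ one would need neighborhoods independent of $\alpha$, i.e.\ equicontinuity of $\{\tilde F(\cdot,\alpha)\}_\alpha$. That is what your joint-continuity hypothesis on $\Kc\times[0,s_0]$ provides.

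Your repaired statement and its uniform-continuity proof are correct. Note, however, that once $F$ and $\tilde F$ are continuous, continuity of $G$ on $\Kc\times\{0\}$ is \emph{equivalent} to the uniform convergence being asserted. So the repaired lemma is only a reformulation, and all the work lies in checking its hypothesis where it is used.

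That check cannot succeed, so the second half of your plan fails. In Lemma~\ref{lem:ric-bound-sgf} the limit is $F=g_\epsilon$ from \eqref{eq:g_epsilon}. This map is in general discontinuous on $\Sc_r$, because its constraints switch on only where $b(u)=0$ or $h_r(x,u)=0$. Already in the scalar case with only $b(u)=u$ and $\epsilon\frac{\partial w}{\partial u}(u)^\top\nabla\Phi(x)\equiv 1$, one has $g_{\epsilon,\alpha,\gamma}(u)=\max\{-1,-\alpha u\}$, while $g_\epsilon(u)=-1$ for $u>0$ and $g_\epsilon(0)=0$. Hence $\sup_{u\in(0,1]}|g_{\epsilon,\alpha,\gamma}(u)-g_\epsilon(u)|=1$ for every $\alpha$.

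More generally, $g_{\epsilon,\alpha,\gamma}$ is continuous (locally Lipschitz by Proposition~\ref{prop:solutions_sgf}), and continuous maps cannot converge uniformly to a discontinuous one. Consequently neither joint continuity at $s=0$ nor an explicit modulus for $\|g_{\epsilon,\alpha,\gamma}-g_\epsilon\|$ can exist; the paper's own requirement that $F$ be continuous fails there too. The parametric-QP argument breaks exactly at $s=0$. The rescaled constraint $s\nabla b(u)^\top q+b(u)\geq 0$ has $q$-gradient $s\nabla b(u)$, which vanishes at $s=0$, so MFCQ is lost at points with $b(u)=0$ and \cite{JL:95} gives nothing. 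At such points the constraint reads $\nabla b(u)^\top q\geq0$ for every $s>0$ but disappears at $s=0$.

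What the downstream argument actually needs is only the one-sided bound $\|g_{\epsilon,\alpha,\gamma}\|\leq(1+\frac{1}{\sigma})\|\Fc\|+\delta$, and it should be proved directly rather than through uniform convergence to $g_\epsilon$. Two observations suggest a route:
\begin{enumerate}
\item For large $\alpha,\gamma$, the constraints are inactive wherever $b(u)$ and $h_r(x,u)$ are bounded away from zero, as in the proof of Theorem~\ref{thm:global_convergence}.
\item On $\Sc_r$, deleting the nonnegative terms $\alpha b(u)$ and $\gamma h_r(x,u)$ only shrinks the feasible set of \eqref{eq:sgf_QP}. So the distance estimates of Lemma~\ref{lem:ric_bound} should carry over to a uniform neighborhood of the boundary on which the $\sigma$-strong MFCQ of Lemma~\ref{lem:conditions-sigma-strong-mfcq} persists by continuity.
\end{enumerate}
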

\begin{proof}
    By assumption, for each $x\in\Kc$, there exists $\alpha_{x,\delta} > 0$ such that for $\alpha > \alpha_{x,\delta}$, $\norm{ \tilde{F}(x,\alpha) - F(x) } < \delta$.
    Given $\alpha > \alpha_{x,\delta}$, since $x\mapsto\tilde{F}(x,\alpha)$ is continuous, this means that there exists a neighborhood $\Nc_{x}$ of $x$ such that $\norm{\tilde{F}(y,\alpha)-F(y)} < \delta$ for all $y\in\Nc_x$.
    Now, note that $\bigcup_{x\in\Kc} \Nc_x$ is an open cover of $\Kc$. Since $\Kc$ is compact, there exists a finite subcover $\bigcup_{i=1}^N \Nc_{x_i}$.
    The result follows by letting $\alpha_{\delta}^* = \max\limits_{i\in[N]} \alpha_{x_i,\delta}^*$.
\end{proof}

\begin{lemma}\longthmtitle{Continuity with respect to $\alpha$, $\gamma$}\label{lem:continuity-alpha-gamma}
    Suppose that the Assumptions of Proposition~\ref{prop:feasibility} hold. Define
    $\bar{g}(\epsilon,\alpha,\gamma,x,u)=g_{\epsilon,\alpha,\gamma}(x,u)$.
    Then, there exists $\breve{\alpha}, \breve{\gamma} > 0$ such that for any $\epsilon > 0$, $(x,u)\in\mathcal{S}_r$ and $\alpha > \breve{\alpha}$, $\gamma > \breve{\gamma}$, $\bar{g}$ is continuous at $(\epsilon, \alpha, \gamma, x, u)$.
\end{lemma}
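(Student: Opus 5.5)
The plan is to treat $\bar g$ as the solution map of a parametric, strongly convex QP and to obtain continuity from Berge's maximum theorem, combined with uniqueness of the minimizer. The parameter is $\theta:=(\epsilon,\alpha,\gamma,x,u)$ and the decision variable is $q$. The objective $\frac12\|q+\epsilon\frac{\partial w}{\partial u}(u)^\top\nabla\Phi(x)\|^2$ is jointly continuous in $(q,\theta)$ and $1$-strongly convex in $q$. Each constraint has the form $a_k(\theta)^\top q+c_k(\theta)\geq 0$, with
\[
a_1=\nabla b(u),\quad c_1=\alpha b(u),\quad a_2=\tfrac{\partial h_r}{\partial u}(x,u)^\top,\quad c_2=\tfrac{\partial h_r}{\partial x}(x,u)f(x,u)+\gamma h_r(x,u).
\]
All of these are continuous in $\theta$ by Assumption~\ref{assum:differentiability, regularity and relative degree}.

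The first step fixes $\breve\alpha,\breve\gamma$ and establishes \emph{strict} feasibility (Slater's condition) at the nominal parameter $\theta_0=(\epsilon_0,\alpha_0,\gamma_0,x_0,u_0)$ whenever $\alpha_0>\breve\alpha$ and $\gamma_0>\breve\gamma$. For this I will reuse the case analysis from the proof of Proposition~\ref{prop:feasibility}. That proof produces the sets $Q_1,\dots,Q_4$ and exhibits a point, either $q_{(x_i,u_i)}$ or $\mathbf 0_m$, that satisfies both constraints with strict inequality. I therefore take $\breve\alpha:=\alpha_f$ and $\breve\gamma:=\gamma_f$, applied on a compact set $\Cc_r\subseteq\Sc_r$ containing $(x_0,u_0)$.

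I expect the main obstacle to be that the lemma claims uniform thresholds on all of $\Sc_r$, whereas Proposition~\ref{prop:feasibility} gives thresholds only on compact sets. I will first try to read the hypothesis as "the assumptions of Proposition~\ref{prop:feasibility} on a compact $\Cc_r$," which is exactly how the lemma is invoked in the proof of Theorem~\ref{thm:global_convergence}, where $\Cc_r=\bigcap_{i=0}^r\Sc_i$, and then prove the statement on $\Cc_r$. Strictness is stable under perturbation, since the constraint functions are continuous in $\theta$. Hence one fixed $\hat q$ remains strictly feasible for all $\theta$ in a neighborhood $U$ of $\theta_0$.

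Next, I obtain local boundedness of the minimizers. For $\theta\in U$, optimality of $\bar g(\theta)$ against the feasible point $\hat q$ gives
\[
\|\bar g(\theta)\|\leq \|\hat q\|+2\epsilon\Big\|\tfrac{\partial w}{\partial u}(u)^\top\nabla\Phi(x)\Big\|,
\]
which is bounded on a compact neighborhood of $\theta_0$. I then verify the two semicontinuity properties of the feasible-set map $K(\theta)$. Closedness of its graph (upper semicontinuity, given the local boundedness) follows from continuity of $a_k$ and $c_k$. Lower semicontinuity follows from Slater's condition: for any $q\in K(\theta_0)$ and $\theta\to\theta_0$, the convex combinations $(1-t)q+t\hat q$ with $t\downarrow 0$ chosen appropriately lie in $K(\theta)$ and converge to $q$.

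Finally, I conclude by a subsequence argument, which is equivalent to Berge's theorem. Take $\theta_k\to\theta_0$. By local boundedness, $\bar g(\theta_k)$ has a convergent subsequence with some limit $\bar q$, and $\bar q\in K(\theta_0)$ by the closed graph. For any $q\in K(\theta_0)$, lower semicontinuity gives $q_k\in K(\theta_k)$ with $q_k\to q$. Passing to the limit in the inequality between the objective values at $\bar g(\theta_k)$ and at $q_k$, using joint continuity of the objective, shows that $\bar q$ minimizes the QP at $\theta_0$. Strong convexity makes this minimizer unique, so $\bar q=\bar g(\theta_0)$. Every subsequence therefore has the same limit, and $\bar g$ is continuous at $\theta_0$.
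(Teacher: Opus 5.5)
Your proof is correct and follows the same route as the paper: Slater's condition comes from the strict-feasibility case analysis in the proof of Proposition~\ref{prop:feasibility}, followed by continuity of the unique minimizer of the parametric strongly convex QP, which the paper takes from \cite[Theorem 5.3]{AVF-JK:85} and which you prove directly via local boundedness, closedness of the feasible-set graph, Slater-based lower semicontinuity, and uniqueness. Restricting to a compact $\Cc_r\subseteq\Sc_r$ is the right reading of the hypotheses, since the paper's argument likewise gives strict feasibility only there, and that is exactly how the lemma is used in Theorem~\ref{thm:global_convergence}.
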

\begin{proof}
    As shown in the proof of Proposition~\ref{prop:feasibility}, the assumptions therein guarantee that the QP defining $\bar{g}$ is strictly feasible for some $\alpha = \breve{\alpha}$ and $\gamma = \breve{\gamma}$, and hence Slater's condition holds.
    This also implies that Slater's condition holds for the QP defining $\bar{g}$ for any $\alpha > \breve{\alpha}$, $\gamma > \breve{\gamma}$. 
    By~\cite[Theorem 5.3]{AVF-JK:85}, 
    $\bar{g}$ is continuous at $(\epsilon,\alpha,\gamma,x,u)$ for all $\alpha > \breve{\alpha}$, $\gamma > \breve{\gamma}$.
\end{proof}

Other conditions under which continuity of $\bar{g}$ with respect to $\alpha$, $\gamma$ holds can be found in~\cite{PM-AA-JC:24-ejc}.
Next, define:
\begin{equation}\label{eq:g_epsilon}
\begin{aligned}
    &g_{\epsilon}(x,u) := \argmin_p \ \frac{1}{2}\|p+\epsilon \frac{\partial w}{\partial u}(u)^\top \nabla \Phi(x)\|^2,\\
    &\quad \mathrm{s.t.}: \ \delta(b(u))\nabla b(u)^\top p\geq0\\
    &\quad \quad \quad \ \delta(h_r(x,u))\Big(\frac{\partial h_r}{\partial u}(x,u)p+\frac{\partial h_r}{\partial x}(x,u)f(x,u)\Big)\geq 0,
\end{aligned}
\end{equation}
where the function $\delta:\real \to \real$ is $\delta(0)=1$ and $\delta(p)=0$ for $p\neq 0$. Further, let
\begin{align*}
    \Fc(x,u) := \begin{pmatrix}
        f(x,u) \\
        -\epsilon \frac{\partial w}{\partial u}(u)^\top \nabla \Phi(x)
    \end{pmatrix}.
\end{align*}

\begin{lemma}\label{lem:ric_bound}
    Let the assumptions of Proposition \ref{prop:feasibility} hold. Consider a compact set $\Cc_r\subseteq\Sc_r$. Assume that for all $(x,u)\in\Cc_r$, if $b(u) = 0$ and $h_r(x,u) = 0$, then $\nabla b(u)$ and $\frac{\partial h_r}{\partial u}(x,u)$ are linearly independent. Then, there exists a constant $\sigma$, such that $\|g_{\epsilon}(x,u)\| \leq (1+\frac{1}{\sigma}) \|\Fc(x,u)\|$, for all $(x,u)\in\Cc_r$.
\end{lemma}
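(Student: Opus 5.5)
The plan is to read $g_\epsilon(x,u)$ as the Euclidean projection of $v:=-\epsilon\frac{\partial w}{\partial u}(u)^\top\nabla\Phi(x)$ onto the closed convex polyhedron
$$K(x,u):=\{p:\ \delta(b(u))\nabla b(u)^\top p\ge 0,\ \delta(h_r(x,u))\,\nabla h_r(x,u)^\top (f(x,u),p)\ge 0\}.$$
Here only constraints whose defining functions vanish at $(x,u)$ appear. The bound then follows from exhibiting one explicit feasible point $p_t\in K(x,u)$ whose distance to $v$ is controlled by $\|\Fc(x,u)\|$. Since $g_\epsilon$ minimizes $\|p-v\|$ over $K$, we get $\|g_\epsilon-v\|\le\|p_t-v\|$, hence $\|g_\epsilon\|\le\|v\|+\|p_t-v\|$. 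Note $\|v\|\le\|\Fc\|$.

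\emph{Trivial case.} If $b(u)>0$ and $h_r(x,u)>0$, then $K=\real^m$. Hence $g_\epsilon=v$ and $\|g_\epsilon\|\le\|\Fc\|$.

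\emph{Active case.} Otherwise I invoke Lemma~\ref{lem:conditions-sigma-strong-mfcq} on $\Cc_r$ to get $\sigma\in(0,1)$ and a direction $q=q_{(x,u)}$ with the $\sigma$-strong MFCQ margins.

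\emph{Key construction.} It is best done in the joint $(x,u)$-space. Set $z_q:=(f,q)$ and $\Fc=(f,v)$, and define
$$z_t:=\frac{\Fc+t z_q}{1+t},\qquad t\ge 0.$$
The first component of $z_t$ is exactly $f$, so $z_t=(f,p_t)$ with $p_t=(v+tq)/(1+t)$. For the $h_r$-constraint, with normal $n=\nabla h_r$, we have
$$n^\top z_t\ \ge\ \frac{-\|n\|\|\Fc\|+t\sigma\|n\|\|z_q\|}{1+t},$$
which is nonnegative once $t\ge\|\Fc\|/(\sigma\|z_q\|)$. For the $b$-constraint, with normal $(0,\nabla b)$, the same computation requires $t\ge\|v\|/(\sigma\|q\|)$. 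With such a $t$ we have $p_t\in K$ and
$$\|p_t-v\|=\frac{t}{1+t}\|q-v\|\le \frac{t\|z_q\|}{1+t}+\frac{t\|\Fc\|}{1+t}.$$
If $t$ is driven by the $h_r$-condition, this is at most $\|\Fc\|/\sigma+\|\Fc\|$. So we obtain a bound of the form $\|g_\epsilon\|\le C(1+\frac1\sigma)\|\Fc\|$.

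\emph{Constant bookkeeping.} I would then sharpen the estimate to the stated form. One option is to use the fact that the statement only asserts the existence of \emph{some} constant $\sigma$, so $\sigma$ may be shrunk to absorb numerical factors. Another is to estimate $\|p_t-v\|$ more carefully, splitting $q-v$ along $z_q$ and $\Fc$.

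\emph{Main obstacle.} The hardest step is the case where both $b(u)=0$ and $h_r(x,u)=0$, since both conditions on $t$ must hold simultaneously. The $b$-margin from Lemma~\ref{lem:conditions-sigma-strong-mfcq} is relative to $\|q\|$, while the $h_r$-margin is relative to $\|(f,q)\|$. If $\|q\|\ll\|f\|$, the $b$-requirement forces a large $t$, and the term $t\|z_q\|/(1+t)$ is then no longer bounded by $\|\Fc\|/\sigma$.

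To fix this I would first try to use the linear-independence hypothesis of the lemma together with compactness of $\Cc_r$ and continuity. The aim is a uniform constant $\kappa>0$ such that on the doubly active set one can choose $q_{(x,u)}$ with $\|q\|\ge\kappa\|(f,q)\|$. Linear independence of $\nabla b(u)$ and $\frac{\partial h_r}{\partial u}(x,u)$ allows adding to $q$ a large multiple of a vector $d$ with $\nabla b^\top d=0$ and $\frac{\partial h_r}{\partial u}d>0$. This preserves both margins while making $\|q\|$ dominate $\|f\|$. A finite-subcover argument, as in the proof of Proposition~\ref{prop:feasibility}, then makes $\kappa$ uniform.

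Once $\kappa$ is available, both conditions on $t$ are met by $t=\|\Fc\|/(\sigma\kappa\|z_q\|)$, and replacing $\sigma$ by $\sigma\kappa$ yields the claimed bound $\|g_\epsilon(x,u)\|\le(1+\frac1\sigma)\|\Fc(x,u)\|$ uniformly on $\Cc_r$.
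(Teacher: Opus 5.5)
Your segment construction $p_t=(v+tq)/(1+t)$ is correct, and so are the thresholds $t_b=\|v\|/(\sigma\|q\|)$ and $t_h=\|\Fc\|/(\sigma\|z_q\|)$. This is the same segment from $v$ to $q_{(x,u)}$ that the paper uses. The last part of the proposal, however, fails as written, because the repair does not preserve the margins. For $q'=q+Md$ with $\nabla b(u)^\top d=0$, the numerator $\nabla b(u)^\top q'=\nabla b(u)^\top q$ is unchanged while $\|q'\|$ grows. So the normalized margin $\nabla b(u)^\top q'/(\|q'\|\|\nabla b(u)\|)$ required by Lemma~\ref{lem:conditions-sigma-strong-mfcq} decays like $1/M$. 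That lemma also gives no lower bound on $\|q_{(x,u)}\|$. Obtaining a uniform $\kappa$ together with a uniform margin would therefore need a new compactness argument (rescaling $q$ and re-deriving both margins), which you do not provide. As written, the doubly active case is not established.

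The obstacle itself, though, comes only from replacing $\|q\|$ by $\|z_q\|$ before knowing which constraint determines $t$. Keep $\|p_t-v\|=\frac{t}{1+t}\|q-v\|$ and $\|q-v\|\le\|q\|+\|v\|$.

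For the $b$-threshold, $\frac{t_b}{1+t_b}\|q-v\|\le\frac{\|v\|(\|q\|+\|v\|)}{\sigma\|q\|+\|v\|}\le\frac{\|v\|}{\sigma}$.

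For the $h_r$-threshold, use $\|q\|\le\|z_q\|$ and $\|v\|\le\|\Fc\|$ only here, to get $\frac{t_h}{1+t_h}\|q-v\|\le\frac{\|\Fc\|(\|z_q\|+\|\Fc\|)}{\sigma\|z_q\|+\|\Fc\|}\le\frac{\|\Fc\|}{\sigma}$. Both final inequalities hold because $\sigma<1$.

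Since $s\mapsto s/(1+s)$ is increasing, $t=\max\{t_b,t_h\}$ makes $p_t$ feasible and gives $\|g_\epsilon(x,u)-v\|\le\|\Fc\|/\sigma$. Hence $\|g_\epsilon(x,u)\|\le(1+\frac{1}{\sigma})\|\Fc\|$ exactly, with the very $\sigma$ of Lemma~\ref{lem:conditions-sigma-strong-mfcq}. That is the $\sigma$ referred to in item~\ref{it:d_3-d_4} of Theorem~\ref{thm:global_convergence}, so no constant bookkeeping is needed. Single-active cases use one threshold. If $q=0$, then $p=0$ is itself feasible and $\|g_\epsilon-v\|\le\|v\|$.

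The paper instead takes the exact first feasible point $t_*$ on the segment. It bounds $t_*\|e\|$, with $e=(0,q-v)$, through the binding constraint only. This requires a case split built on the orthogonal projection of $\Fc$ onto $\mathrm{span}\{e\}$. The mismatch between $\|q\|$ and $\|(f,q)\|$ that worried you is handled there in Case B2 by $(f,0)\perp e$. Your repaired argument reaches the same constant with an explicit, non-optimal $t$ and no projection or case analysis.
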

\begin{proof}
    Notice that, when $b(u)>0$ and $h_r(x,u)>0$, then trivially $\|g_\epsilon(x,u)\|\leq \|\Fc(x,u)\|$. For the rest of the proof, we assume that both $b(u)=0$ and $ h_r(x,u)=0$, as the case where either $b(u)=0$ or $h_r(x,u)=0$ is a special case thereof.
    
    By Lemma \ref{lem:conditions-sigma-strong-mfcq}, there exists $\sigma\in(0,1)$, such that for any $(x,u)\in \Cc_r$, there exists $q_{(x,u)}\in\real^m$ such that \eqref{eq:sigma-strong-mfcq} holds; and thus $q_{(x,u)}$ is feasible for \eqref{eq:g_epsilon}. In what follows, denote:
    \begin{equation*}
        e  := (0,\ q_{(x,u)}+\epsilon \frac{\partial w}{\partial u}(u)^\top \nabla \Phi(x)) \in\real^{n+m}.
    \end{equation*}
    Notice that \eqref{eq:sigma-strong-mfcq} implies that:
    \begin{subequations}\label{eq:sigma-strong-simplified}
        \begin{align}
            \label{eq:hr>0}
            &\langle \Fc + e , \ \nabla h_r(x,u) \rangle \geq \sigma \norm{(f(x,u), \ q_{(x,u)})}\norm{\nabla h_r(x,u)}, \\ 
            \label{eq:b>0}
            &\langle \Fc + e , \ (0,\nabla b(u) \rangle \geq \sigma \norm{ q_{(x,u)}}\norm{\nabla b(x,u)}.
        \end{align}
    \end{subequations}
    In what follows, we omit the dependence of $\Fc$ and $e$ on $(x,u)$, for brevity.
    Consider any $(x,u)\in \Cc_r$. We first show that the statement holds trivially in the following two cases:
        \paragraph*{a)} If $p=-\epsilon \frac{\partial w}{\partial u}(u)^\top \nabla \Phi(x)$ is feasible for \eqref{eq:g_epsilon}, then $g_\epsilon(x,u) = -\epsilon \frac{\partial w}{\partial u}(u)^\top \nabla \Phi(x)$ and the statement holds.
        \paragraph*{b)} If $\|e \|\leq \|\Fc \|$, then
        \begin{align*}
            \|g_\epsilon(x,u)+\epsilon \frac{\partial w}{\partial u}(u)^\top \nabla \Phi(x)\|&\leq \|q_{(x,u)}+\epsilon \frac{\partial w}{\partial u}(u)^\top \nabla \Phi(x)\|\\
            &=\|e \|\leq \|\Fc \|,
        \end{align*}
        where the first inequality is because $q_{(x,u)}$ is feasible for \eqref{eq:g_epsilon}. Thus, $\|g_\epsilon(x,u)\|\leq \|\epsilon \frac{\partial w}{\partial u}(u)^\top \nabla \Phi(x)\|+\|\Fc \|\leq (1+\frac{1}{\sigma}) \|\Fc \|$.    
        
        In the rest of the proof, we consider the case where both
        \begin{enumerate}[{\it (i)}]
        \item $p=-\epsilon \frac{\partial w}{\partial u}(u)^\top \nabla \Phi(x)$  is not feasible for \eqref{eq:g_epsilon}, and
        \item $\|e \|> \|\Fc \|$.
        \end{enumerate}
        
        Let us define the linear functions $r_1,r_2:\real\to\real$ by
        \begin{align*}
            &r_1(t) :=\langle\Fc +te  , \ \nabla h_r(x,u)\rangle = t\frac{\partial h_r}{\partial u}(x,u)q_{(x,u)} + \\
            &\quad \quad (t-1) \frac{\partial h_r}{\partial u}(x,u)\epsilon \frac{\partial w}{\partial u}(u)^\top \nabla \Phi(x) + \frac{\partial h_r}{\partial x}(x,u)f(x,u), \\
            &r_2(t):=t\nabla b(u)^\top q_{(x,u)} +(t-1) \nabla b(u)^\top\epsilon \frac{\partial w}{\partial u}(u)^\top \nabla \Phi(x).
        \end{align*}
        Let us show that there exist $t_* \in (0,1)$, $j_*\in \{1,2\}$ such that
        \begin{subequations}
        \label{eq:boundary_point}
        \begin{align}
            \label{eq:t_star}
            &r_{j_\star}(t_*) = 0, \\
            \label{eq:t_TS}
            \begin{split}
                &\mbox{For} \ t \leq 1, \ r_1(t),r_2(t)\geq 0 \iff r_{j_\star}(t) \geq 0.
            \end{split}
        \end{align}
        \end{subequations}
        By item \textit{(i)}, there exists a nonempty set $J\subseteq \{1,2\}$ with
        \begin{subequations}
        \begin{align}
            \label{eq:J}
            r_j(0) < 0,& \mbox{ for all } j \in J, \\
            \label{eq:I_minus_J}
            r_i(0) \geq 0,& \mbox{ for all } i \in \{1,2\} \setminus J.
        \end{align}
    \end{subequations}
    Moreover, due to \eqref{eq:sigma-strong-simplified},
    $r_1(1), r_2(1) > 0$.\footnote{The RHS of \eqref{eq:sigma-strong-simplified} is strictly positive, since: a) item \textit{(ii)} implies that $(f(x,u),q_{(x,u)})=\Fc+e\neq 0$ and $q_{(x,u)}\neq 0$, and b) $\nabla b(u)$ and $\nabla h_r(x,u)$ are non-zero, from linear independence of $\nabla b(u)$ and $\frac{\partial h_r}{\partial u}(x,u)$.}
    Due to linearity, this together with \eqref{eq:J} implies that, for each $j \in J$, $r_j$ is strictly increasing, therefore there exists a unique scalar $t_{j} \in (0,1)$ such that $r_j(t_{j}) = 0$. Picking\footnote{When the solution of the maximization is non-unique, the proof follows identically by selecting any of the maximizers and the corresponding index~$j$.} $t_* = \max_{j\in J}\{t_{j}\}$ and $j_* = \argmax_{j\in J}\{t_{j}\}$, we have \eqref{eq:t_star}. Further, again due to strict increase,
    \begin{align}
        \label{eq:aux_rj_geq_0}
        t \in [t_*, 1] \implies r_j(t) \geq 0, \mbox{ for all } j \in J.
    \end{align}
    What is more,~\eqref{eq:I_minus_J} together with $r_1(1), r_2(1) > 0$ yields
    \begin{align}
        \label{eq:monotonicity_consequence}
        t \in [0,1] \implies r_i(t) \geq 0, \mbox{ for all } i \in \{1,2\} \setminus J.
    \end{align}
    Combining \eqref{eq:t_star}, \eqref{eq:aux_rj_geq_0} and \eqref{eq:monotonicity_consequence}, we obtain \eqref{eq:t_TS}.
    Now, consider $\tilde{t} := \frac{\langle\Fc , \ e  \rangle}{\|e \|^2}$, and notice that $\tilde{t}\in (-1,1)$ due to item \textit{(ii)}. Let us also define
    $\tilde{e}:= (1+\tilde{t})e$, $\tilde{\Fc} := \Fc  - \tilde{t}e$.
    Note that $\tilde t e $ is the projection of $\Fc $ on the linear subspace generated by $e$.
    Thus, $\tilde \Fc  \perp \tilde e $, $\|\tilde t e \| \leq \|\Fc \|$, $\|\tilde \Fc \| \leq \|\Fc \|$ (these are properties of the orthogonal projection) and $\tilde \Fc  + \tilde e  = \Fc  + e $.
    We consider two cases.

    \textbf{Case A:} $p_*:= -(1+\tilde t)\epsilon \frac{\partial w}{\partial u}(u)^\top \nabla \Phi(x) -\tilde{t} q_{(x,u)}$ is feasible for \eqref{eq:g_epsilon}. Then:
    \begin{align*}
            \|g_\epsilon(x,u)+\epsilon \frac{\partial w}{\partial u}(u)^\top \nabla \Phi(x)\|&\leq \|p_*+\epsilon \frac{\partial w}{\partial u}(u)^\top \nabla \Phi(x)\|\\
            &=\tilde t\|e \|\leq \|\Fc \|.
        \end{align*}
    Thus, $\|g_\epsilon(x,u)\|\leq \|\epsilon \frac{\partial w}{\partial u}(u)^\top \nabla \Phi(x)\|+\|\Fc \|\leq (1+\frac{1}{\sigma}) \|\Fc \|$.
    
    \textbf{Case B:} $p_*= -(1+\tilde t)\epsilon \frac{\partial w}{\partial u}(u)^\top \nabla \Phi(x) -\tilde{t} q_{(x,u)}$ is not feasible for \eqref{eq:g_epsilon}. This implies that $r_1(-\tilde t)<0$ or $r_2(-\tilde t)<0$. From \eqref{eq:t_TS}, this implies that $r_{j_*}(-\tilde t)<0$. We consider the two following sub-cases:
    
    \textit{Case B1:} $j_*=1$. Then we have $\langle\tilde \Fc , \nabla h_r(x,u) \rangle = r_1(-\tilde t) < 0$. Combining this with \eqref{eq:hr>0} and the fact that $\tilde \Fc  + \tilde e  = \Fc  + e $, we get
        \begin{align*}
            &\langle\tilde e , \nabla h_r(x,u) \rangle \! > \! \langle\tilde \Fc + \tilde e, \nabla h_r(x,u) \rangle = \langle \Fc + e , \nabla h_r(x,u) \rangle\\
            &\geq \sigma \norm{(f(x,u), \ q_{(x,u)})}\norm{\nabla h_r(x,u)} \! = \! \sigma \norm{\Fc  \! + \! e }\norm{\nabla h_r(x,u)}\\
            & = \sigma \norm{\tilde\Fc  + \tilde e }\norm{\nabla h_r(x,u)} \geq \sigma \norm{ \tilde e }\norm{\nabla h_r(x,u)}, 
        \end{align*}
        where the last inequality comes from the fact that $\tilde \Fc  \perp \tilde e $. Multiplying both sides by $\frac{t_*}{1+\tilde t}>0$, we obtain
        \begin{align*}
        &\sigma \| t_* e \| \|\nabla h_{r}(x,u) \|
        = \sigma\Big\| \frac{t_*}{1 + \tilde t} \tilde e \Big\| \left\| \nabla h_{r}(x,u) \right\| \\
        &\leq \dotprod{\frac{t_*}{1 + \tilde t} \tilde e}{\nabla h_{r}(x,u)} = \dotprod{t_* e}{\nabla h_{r}(x,u)} = \\
        &r_1(t_*) \! - \! \dotprod{\Fc}{\nabla h_{r}(x,u)} \! = \! - \! \dotprod{\Fc}{\nabla h_{r}(x,u)} \! \leq \! \|\Fc\| \|\nabla h_{r}(x,u)\|.
    \end{align*}
    By~\eqref{eq:boundary_point}, we have $r_1(t_*) =0 $ and $r_2(t_*)\geq 0$. Hence, $q_* := (t_*-1)\epsilon \frac{\partial w}{\partial u}(u)^\top \nabla \Phi(x) + t_*q_{(x,u)}$ is feasible for \eqref{eq:g_epsilon}. Thus:
    \begin{align*}
            \|g_\epsilon(x,u)+\epsilon \frac{\partial w}{\partial u}(u)^\top \nabla \Phi(x)\|&\leq \|q_*+\epsilon \frac{\partial w}{\partial u}(u)^\top \nabla \Phi(x)\|\\
            &=t_*\|e\|\leq \frac{1}{\sigma}\|\Fc \|.
        \end{align*}
    and the bound follows.
    
    \textit{Case B2:} $j_*=2$. We have $\langle\tilde \Fc , (0,\nabla b(u)) \rangle = r_2(-\tilde t) < 0$. Combining this with \eqref{eq:b>0} and $\tilde \Fc  + \tilde e  = \Fc  + e $, we get
    \begin{align*}
            &\langle\tilde e , (0,\nabla b(u)) \rangle > \langle\tilde \Fc + \tilde e , \nabla (0,\nabla b(u) \rangle = \langle \Fc + e , (0,\nabla b(u)) \rangle 
            \\
            &\geq \! \sigma \norm{ q_{(x,u)}} \! \norm{\nabla b(u)} \! = \! \sigma \Big\| (0,-\epsilon \frac{\partial w} {\partial u}(u)^\top \nabla \Phi(x)) \! + \! e \Big\| \! \norm{\nabla b(u)}\\
            & = \sigma \norm{\Fc -(f(x,u),0)  + e }\norm{\nabla b(u)}\\
            & = \sigma \norm{\tilde\Fc  + \tilde e -(f(x,u),0)}\norm{\nabla b(u)} \geq \sigma \norm{ \tilde e }\norm{\nabla b(u)}, 
        \end{align*}
        where the last inequality comes from the fact that $\tilde \Fc \perp \tilde e $ and $(f(x,u),0) \perp \tilde e$. The rest follows as in the previous case.
\end{proof}

\begin{lemma}\longthmtitle{Upper bound on SGF}\label{lem:ric-bound-sgf}
    Let the assumptions of Proposition \ref{prop:feasibility} hold. Consider a compact set $\Cc_r\subseteq\Sc_r$. Assume that for all $(x,u)\in\Cc_r$, if $b(u) = 0$ and $h_r(x,u) = 0$, then $\nabla b(u)$ and $\frac{\partial h_r}{\partial u}(x,u)$ are linearly independent. Then, there exists $\sigma >0$ such that: for any $\delta > 0$, there exist $\alpha_{\delta} > 0$ and $\gamma_{\delta} > 0$
    such that by taking $\alpha > \alpha_{\delta}$ and $\gamma > \gamma_{\delta}$, we have
    \begin{align*}
        \norm{g_{\epsilon,\alpha,\gamma}(x,u)} \leq (1+\frac{1}{\sigma})\norm{\Fc } + \delta.
    \end{align*}
\end{lemma}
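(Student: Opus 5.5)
The plan is to derive the bound by combining Lemma~\ref{lem:ric_bound}, which controls the limiting controller $g_\epsilon$ defined in \eqref{eq:g_epsilon}, with a uniform convergence statement $g_{\epsilon,\alpha,\gamma}\to g_\epsilon$ on $\Cc_r$ as $\alpha,\gamma\to\infty$. The constant $\sigma$ is taken from Lemma~\ref{lem:ric_bound} (equivalently, from Lemma~\ref{lem:conditions-sigma-strong-mfcq}). Once we know that, for every $\delta>0$, there are $\alpha_\delta,\gamma_\delta$ with $\sup_{(x,u)\in\Cc_r}\|g_{\epsilon,\alpha,\gamma}(x,u)-g_\epsilon(x,u)\|\le\delta$ for all $\alpha>\alpha_\delta$, $\gamma>\gamma_\delta$, the triangle inequality gives
\[
\|g_{\epsilon,\alpha,\gamma}\|\le\|g_\epsilon\|+\delta\le(1+\tfrac1\sigma)\|\Fc\|+\delta .
\]

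\textbf{Pointwise limit.} Fix $(x,u)\in\Cc_r$. If $b(u)>0$, the constraint $\nabla b(u)^\top q+\alpha b(u)\ge0$ relaxes to all of $\real^m$ as $\alpha\to\infty$. If $b(u)=0$, it coincides with the corresponding constraint of \eqref{eq:g_epsilon}. The same holds for the $h_r$ constraint, with $h_r(x,u)\ge0$ on $\Sc_r$. The feasible sets are closed polyhedra that increase monotonically to the feasible set of \eqref{eq:g_epsilon}. That limiting set is nonempty, since it contains $q_{(x,u)}$, and it satisfies Slater/MFCQ by the hypotheses of Proposition~\ref{prop:feasibility}. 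Because the objective is strongly convex, the projection of $-\epsilon\frac{\partial w}{\partial u}(u)^\top\nabla\Phi(x)$ onto the increasing sets converges to the projection onto their closure-limit. Hence $g_{\epsilon,\alpha,\gamma}(x,u)\to g_\epsilon(x,u)$. In fact, for large $\alpha,\gamma$ the inactive constraints are eventually slack at $g_\epsilon(x,u)$, so equality even holds for large parameters.

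\textbf{Uniformity.} Lemma~\ref{lem:alpha-proximity} cannot be applied verbatim, since $g_\epsilon$ is discontinuous in $(x,u)$; it jumps where $b$ or $h_r$ vanish. I therefore plan a direct argument along the lines of the proof of Proposition~\ref{prop:crcq}, but one that only needs an upper bound rather than closeness. Split $\Cc_r$ according to whether $b(u)\ge\eta$ or $b(u)<\eta$, and similarly for $h_r$, for a small $\eta>0$.

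On the region where a quantity is at least $\eta$, that constraint is inactive once $\alpha$ (resp.\ $\gamma$) exceeds a uniform bound. Such bounds exist by compactness, exactly as $\alpha_0,\gamma_1$ were produced in the proof of Proposition~\ref{prop:crcq}. On the near-boundary region $\{0\le h_r<\eta\}$, I redo the proof of Lemma~\ref{lem:ric_bound} directly for \eqref{eq:sgf_QP} instead of \eqref{eq:g_epsilon}. The same segment construction $t\mapsto\Fc+te$ with $q_{(x,u)}$ still produces a feasible point, now with the extra nonnegative terms $\alpha b(u)$ and $\gamma h_r(x,u)$. Those terms only enlarge feasibility, except that Lemma~\ref{lem:conditions-sigma-strong-mfcq} is needed in a neighborhood rather than exactly at $h_r=0$. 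Since Lemma~\ref{lem:conditions-sigma-strong-mfcq} was proved via open neighborhoods $\Nc_{(x,u)}$, it extends to an $\eta$-band with a slightly smaller $\sigma$, at the cost of a defect that vanishes as $\eta\to0$. That defect is absorbed into $\delta$.

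\textbf{Expected obstacle.} The main difficulty is exactly this uniformity near the boundary, where $g_\epsilon$ is discontinuous. My first attempt is the $\eta$-band argument above: choose $\eta$ from $\delta$ via continuity of $\nabla h_r$, $\nabla b$ and $f$ on the compact set, and then choose $\alpha_\delta,\gamma_\delta\ge\bar M/\eta$ so that constraints in the interior region are inactive. As a fallback, I would use upper semicontinuity of $\|g_{\epsilon,\alpha,\gamma}\|$ in $(x,u)$, via Lemma~\ref{lem:continuity-alpha-gamma}, together with a finite-subcover argument in the spirit of Lemma~\ref{lem:alpha-proximity}, applied to the scalar function $\max\{\|g_{\epsilon,\alpha,\gamma}\|-(1+\frac1\sigma)\|\Fc\|,0\}$ rather than to the vector difference.
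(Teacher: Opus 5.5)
Your main argument is correct, and it takes a genuinely different route from the paper's.

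\textbf{The paper's route.} It takes the pointwise limit $g_{\epsilon,\alpha,\gamma}\to g_\epsilon$ and adds continuity in $(\alpha,\gamma)$ (Lemma~\ref{lem:continuity-alpha-gamma}). It then invokes Lemma~\ref{lem:alpha-proximity} to get $\|g_{\epsilon,\alpha,\gamma}\|\le\|g_\epsilon\|+\delta$ uniformly on $\Cc_r$, and finishes with Lemma~\ref{lem:ric_bound}.

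\textbf{Your route.} You never pass through $g_\epsilon$. Constraints whose value is at least $\eta$ are made slack by uniform thresholds on $\alpha,\gamma$, as in the proof of Proposition~\ref{prop:crcq}. The feasible-point construction of Lemma~\ref{lem:ric_bound} is then rerun for \eqref{eq:sgf_QP} itself on the boundary bands, where the terms $\alpha b(u)\ge 0$ and $\gamma h_r(x,u)\ge 0$ only enlarge the feasible set.

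\textbf{Why this matters.} The obstruction you flag is real, and it is exactly where the paper's proof breaks. Lemma~\ref{lem:alpha-proximity} needs a continuous limit, which $g_\epsilon$ is not. Its finite-subcover proof is also unsound on its own terms, since the neighbourhoods $\Nc_x$ depend on $\alpha$. In fact the intermediate inequality $\|g_{\epsilon,\alpha,\gamma}\|\le\|g_\epsilon\|+\delta$ fails for small $\delta$ near any point of $\Cc_r$ with $h_r=0$, $b>0$, $\frac{\partial h_r}{\partial x}f<0$ and $\frac{\partial w}{\partial u}(u)^\top\nabla\Phi(x)=0$:
\begin{itemize}
\item at neighbouring points with $h_r>0$ one has $g_\epsilon\approx 0$;
\item the continuous map $g_{\epsilon,\alpha,\gamma}$ stays close to its value at the boundary point, whose norm is $|\frac{\partial h_r}{\partial x}f|/\|\frac{\partial h_r}{\partial u}\|>0$.
\end{itemize}
This does not contradict the lemma itself, since there $\|\Fc\|\ge\|f\|>0$. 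So your route gives an actual proof. With $\eta$ fixed it even gives $\|g_{\epsilon,\alpha,\gamma}\|\le(1+\frac{1}{\sigma})\|\Fc\|$ with no $\delta$, for all $\alpha,\gamma$ above thresholds independent of $\delta$.

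\textbf{Three things to tidy up.}

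First, drop your opening framing. The claim $\sup_{\Cc_r}\|g_{\epsilon,\alpha,\gamma}-g_\epsilon\|\to 0$ is not just hard to obtain but false: $g_{\epsilon,\alpha,\gamma}$ is continuous on $\Cc_r$, and a uniform limit of continuous maps is continuous. Your band argument replaces that statement rather than proving it, and the pointwise-limit paragraph becomes unnecessary.

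Second, $\sigma$ must be fixed before $\delta$, so it must not shrink with an $\eta$ chosen from $\delta$. No defect is needed anyway. The finite-cover proof of Lemma~\ref{lem:conditions-sigma-strong-mfcq}, combined with the compactness step producing $c_1,c_2,c_3$ in the proof of Proposition~\ref{prop:feasibility}, already yields the $\sigma$-inequalities on whole bands for one fixed small $\eta$. It does so with the same $\sigma$ and finitely many vectors $q_{(x_i,u_i)}$, on $\{h_r<\eta\}$, on $\{b<\eta\}$, and on their intersection (using a cover of $\{b=h_r=0\}$ with common vectors). Every point produced by the construction of Lemma~\ref{lem:ric_bound} then has norm at most $(1+\frac{1}{\sigma})\max_{\Cc_r}\|\Fc\|$. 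That bound is what makes the slackness thresholds, of order $1/\eta$, uniform.

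Third, your fallback inherits the flaw of Lemma~\ref{lem:alpha-proximity}. Upper semicontinuity at a fixed $\alpha$ yields neighbourhoods that depend on $\alpha$, and $\max\{\|g_{\epsilon,\alpha,\gamma}\|-(1+\frac{1}{\sigma})\|\Fc\|,0\}$ is not monotone in $\alpha$. A one-sided Dini argument does work if you apply it instead to $\|g_{\epsilon,\alpha,\gamma}(x,u)+\epsilon\frac{\partial w}{\partial u}(u)^\top\nabla\Phi(x)\|$. This quantity is continuous in $(x,u)$ and nonincreasing in $\alpha$ and in $\gamma$, because the feasible set grows on $\Sc_r$. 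By the proof of Lemma~\ref{lem:ric_bound}, it converges pointwise to a limit bounded by $\frac{1}{\sigma}\|\Fc\|$. That would be the minimal repair of the paper's own route.
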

\begin{proof}[\textbf{Proof of Lemma \ref{lem:ric-bound-sgf}}]
    By an argument analogous to~\cite[Proposition 4.4]{allibhoy2023control}, 
    and taking $\gamma = \alpha$, we have that for each $(x,u)\in\mathcal{C}_r$, $\lim\limits_{\alpha\to\infty} g_{\epsilon,\alpha,\gamma}(x,u) = g_{\epsilon}(x,u)$.
    Moreover, $g_{\epsilon,\alpha,\gamma}$ is continuous with respect to $\alpha$ and $\gamma$ for $\alpha > \breve{\alpha}$ and $\gamma > \breve{\gamma}$ by Lemma \ref{lem:continuity-alpha-gamma}.
    By Lemma~\ref{lem:alpha-proximity}, there exist $\alpha_{\delta},\gamma_{\delta} > 0$
    such that by taking $\alpha > \alpha_{\delta}$ and $\gamma > \gamma_{\delta}$, we have $\norm{g_{\epsilon,\alpha,\gamma}(x,u)} \leq \norm{g_{\epsilon}(x,u)} + \delta$. The result follows from Lemma~\ref{lem:ric_bound}.
\end{proof}

\bibliography{mybib.bib}
\bibliographystyle{IEEEtran}

\vspace*{-4ex}

\begin{IEEEbiography}[{\includegraphics[width=1in,height=1.2in,clip,keepaspectratio]{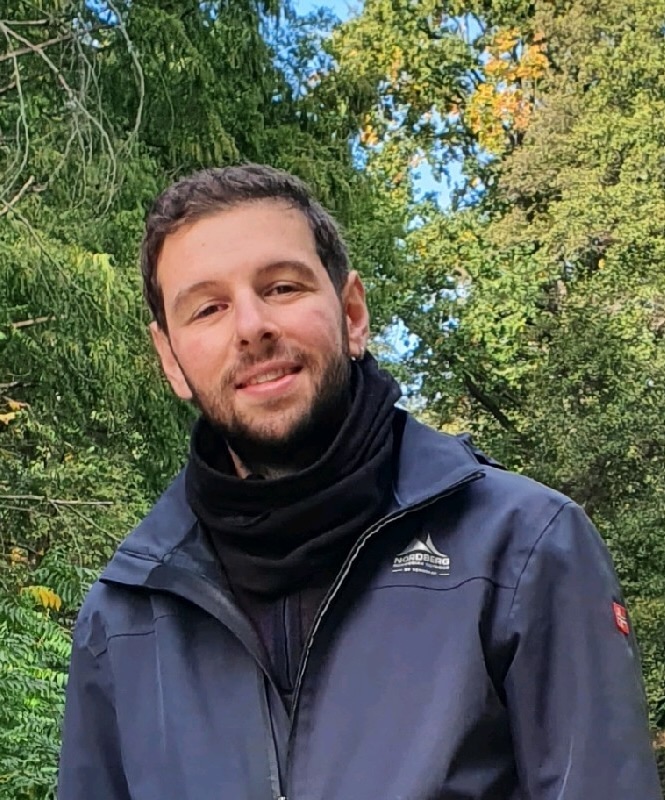}}]{Giannis Delimpaltadakis} (S'18, M'22) received his diploma in Electrical and Computer Engineering in 2017 from the National Technical University of Athens. He obtained his PhD with Cum Laude distinction in 2022 from Delft University of Technology. Currently, he is a senior research scientist at AI4I, Italy. His research interests include control theory (particularly stochastic and hybrid systems) and its intersections with formal methods, optimization and information theory.
\end{IEEEbiography}

\vspace*{-4ex}

\begin{IEEEbiography}[{\includegraphics[width=1in,height=1.2in,clip,keepaspectratio]{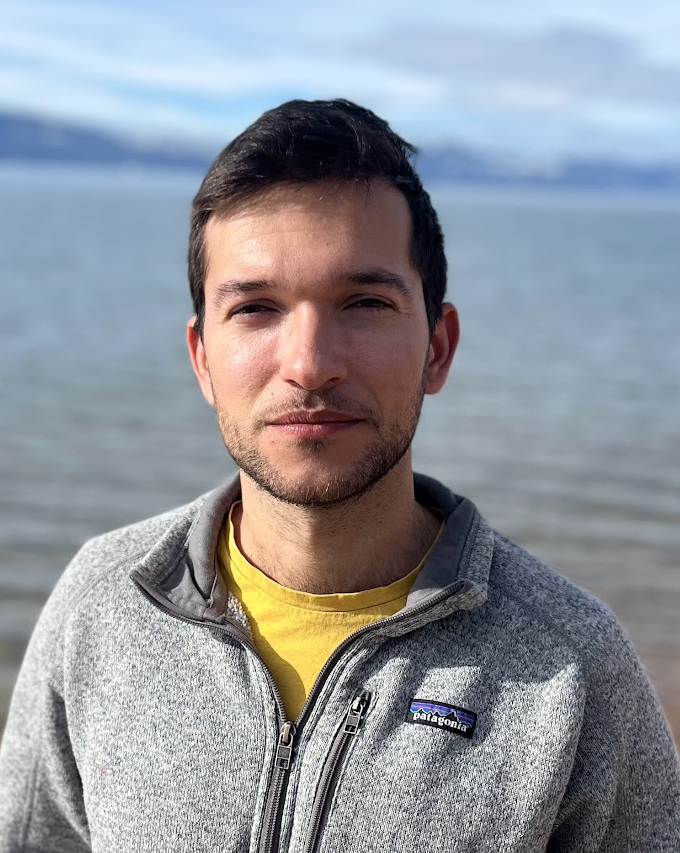}}]{Pol Mestres} received the Bachelor's degree in mathematics and the Bachelor's degreee in engineering physics from the Universitat Politècnica de Catalunya, Barcelona, Spain, in 2020, and the Master's and Ph.D degrees in mechanical engineering in 2021 and 2025 respectively from the University of California, San Diego, La Jolla, CA, USA. 
He is currently a postdoctoral scholar at the California Institute of Technology.
His research interests include safety-critical control, motion planning, and reinforcement learning.
\end{IEEEbiography}

\vspace*{-4ex}

\begin{IEEEbiography}[{\includegraphics[width=1in,height=1.2in,clip,keepaspectratio]{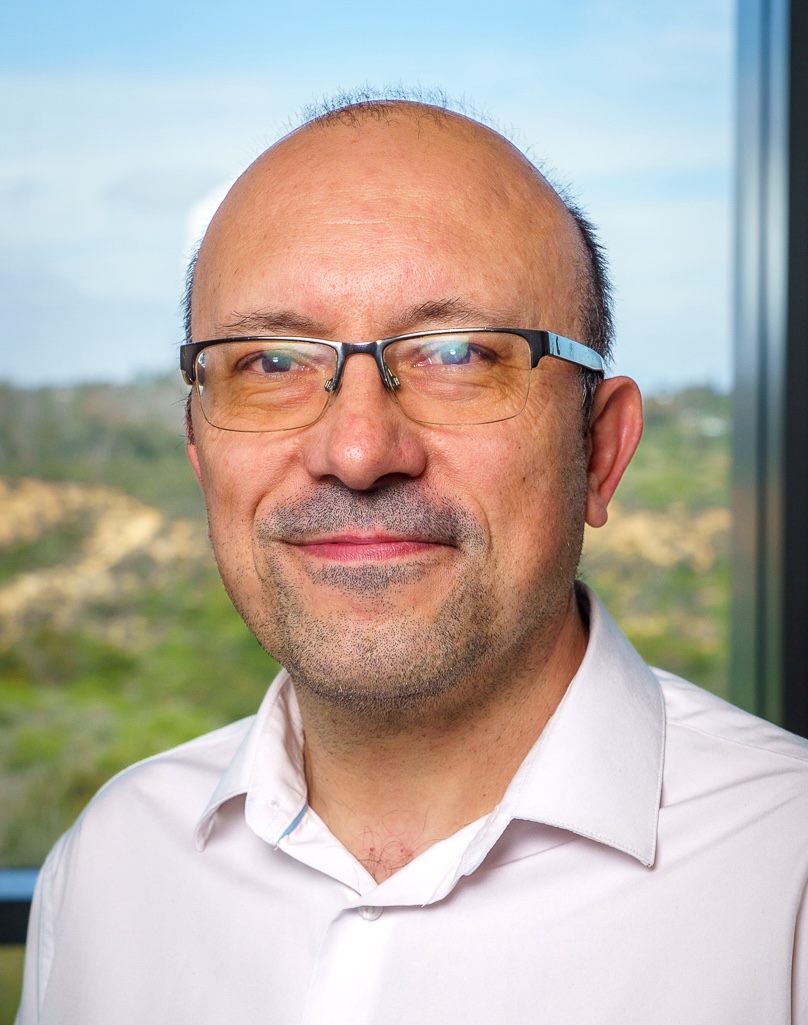}}]{Jorge
     Cort\'{e}s}(M'02, SM'06, F'14) received the Licenciatura degree in
   mathematics from Universidad de Zaragoza, Spain, in 1997, and the
   Ph.D. degree in engineering mathematics from Universidad Carlos III
  de Madrid, Spain, in 2001. He held postdoctoral positions with the
  University of Twente, Twente, The Netherlands, and the University of
  Illinois at Urbana-Champaign, Illinois, USA. 
  He is a Professor and Cymer Corporation Endowed Chair in High
  Performance Dynamic Systems Modeling and Control at the Department
  of Mechanical and Aerospace Engineering, UC San Diego, California,
  USA.  He is a Fellow of IEEE, SIAM, and IFAC.  His research
  interests include distributed control and optimization, network
  science, autonomy, learning, nonsmooth analysis, decision making under
  uncertainty, network neuroscience, and multi-agent coordination in
  robotic, power, and transportation networks.
\end{IEEEbiography}

\vspace*{-4ex}

\begin{IEEEbiography}[{\includegraphics[width=1in,height=1.2in,clip,keepaspectratio]{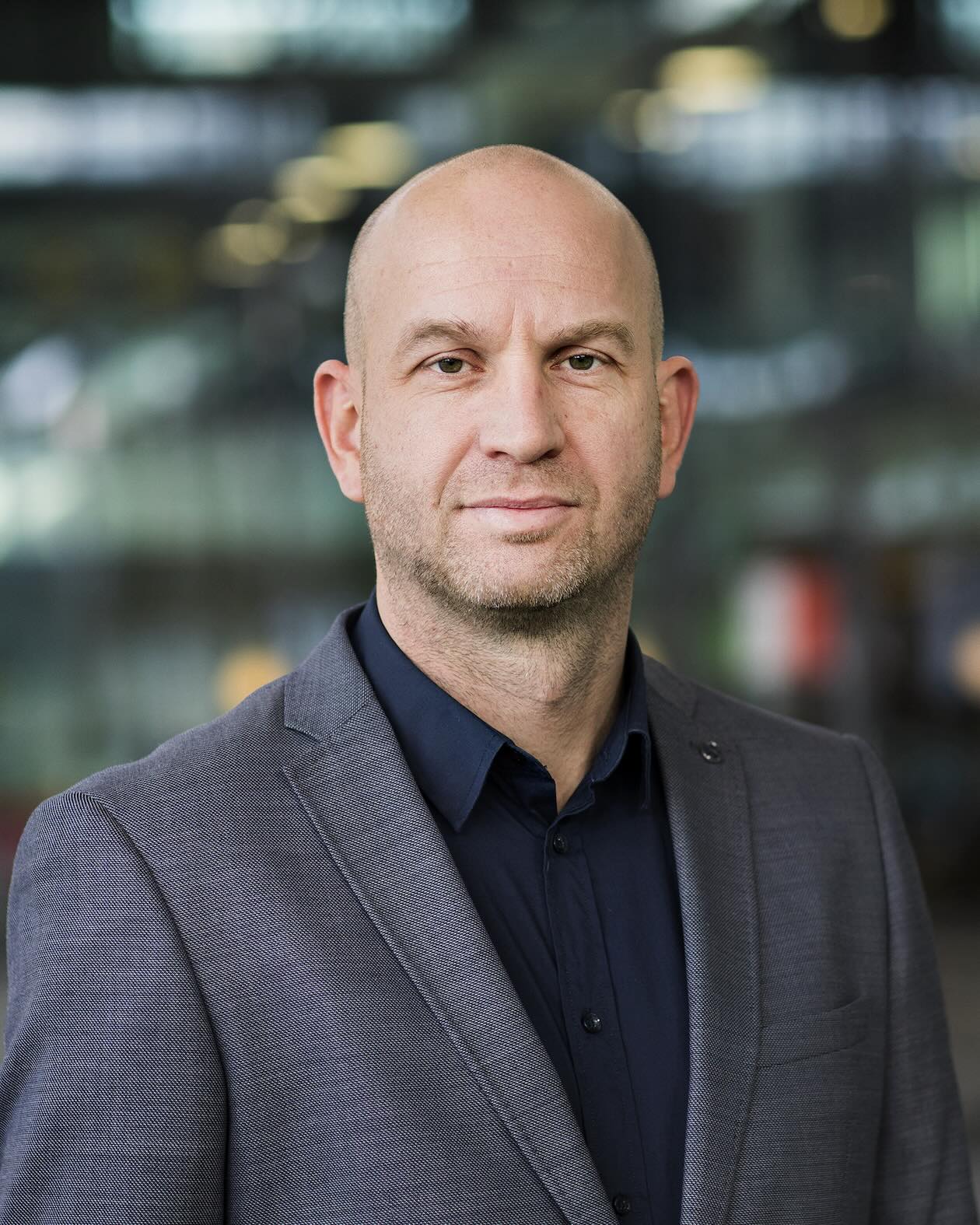}}]{Maurice Heemels}  received M.Sc. (mathematics) and Ph.D. (EE, control theory) degrees (summa cum laude) from the Eindhoven University of Technology (TU/e) in 1995 and 1999, respectively. From 2000 to 2004, he was with the Electrical Engineering Department, TU/e, as an assistant professor, and from 2004 to 2006 with TNO-Embedded Systems Institute as a Research Fellow. Since 2006, he has been with the Department of Mechanical Engineering, TU/e, where he is currently a Full Professor and Vice-Dean. He was a visiting professor at ETH, Switzerland, UCSB, USA, and University of Lorraine, France. He  is a Fellow of IEEE and IFAC. He was the recipient of the 2019 IEEE L-CSS Outstanding Paper Award and the Automatica Paper Prize 2020-2022. His current research includes hybrid and cyber-physical systems, networked, neuromorphic and event-triggered control, and model predictive control and their applications. \end{IEEEbiography}

}

\end{document}